\numberwithin{equation}{section}
\newcommand{\be}{\begin{eqnarray}}
\newcommand{\ee}{\end{eqnarray}}
\newcommand{\ce}{\begin{eqnarray*}}
\newcommand{\de}{\end{eqnarray*}}
\newtheorem{theorem}{Theorem}[section]
\newtheorem{lemma}[theorem]{Lemma}
\newtheorem{remark}[theorem]{Remark}
\newtheorem{definition}[theorem]{Definition}
\newtheorem{proposition}[theorem]{Proposition}
\newtheorem{Examples}[theorem]{Example}
\newtheorem{corollary}[theorem]{Corollary}
\DeclareMathOperator*{\essinf}{essinf}
\def\eps{\varepsilon}
\def\p{\partial}
\def\[{{\Big[}}
\def\]{{\Big]}}
\def\<{{\langle}}
\def\>{{\rangle}}
\def\({{\Big(}}
\def\){{\Big)}}
\def\bx{{\mathbf{x}}}
\def\e{{\rm e}}
\def\dif{{\mathord{{\rm d}}}}
\def\Vol{\mathord{{\rm Vol}}}
\def\no{\nonumber}
\def\={&\!\!=\!\!&}
\def\bt{\begin{theorem}}
\def\et{\end{theorem}}
\def\bl{\begin{lemma}}
\def\el{\end{lemma}}
\def\br{\begin{remark}}
\def\er{\end{remark}}
\def\bd{\begin{definition}}
\def\ed{\end{definition}}
\def\bp{\begin{proposition}}
\def\ep{\end{proposition}}
\def\bc{\begin{corollary}}
\def\ec{\end{corollary}}
\def\bx{\begin{Examples}}
\def\ex{\end{Examples}}
\def\cB{{\mathcal B}}
\def\cJ{{\mathcal J}}
\def\m1{{\mathbbold{1}}}
\def\mD{{\mathbb D}}
\def\mE{{\mathbb E}}
\def\mI{{\mathbb I}}
\def\mK{{\mathbb K}}
\def\mN{{\mathbb N}}
\def\mP{{\mathbb P}}
\def\mR{{\mathbb R}}
\def\mZ{{\mathbb Z}}
\def\sF{{\mathscr F}}
\def\sI{{\mathscr I}}
\def\sL{{\mathscr L}}
\def\geq{\geqslant}
\def\leq{\leqslant}
\begin{document}
\title{Heat kernels for non-symmetric diffusion operators with jumps  }

\date{}

\author{{Zhen-Qing Chen},\ \ {Eryan Hu},\ \ {Longjie Xie}
and  {Xicheng Zhang}
 }

\address{Zhen-Qing Chen:
Department of Mathematics, University of Washington, Seattle, WA 98195, USA\\
Email: zqchen@uw.edu
 }

\address{Eryan Hu:
Department of Mathematics, Beijing Institute of Technology,
Beijing, 100081, P.R.China\\
Email: eryanhu@gmail.com
 }
\address{Longjie Xie:
School of Mathematics and Statistics, Jiangsu Normal University,
Xuzhou, Jiangsu 221000, P.R.China\\
Email: xlj.98@whu.edu.cn
 }
\address{Xicheng Zhang:
School of Mathematics and Statistics, Wuhan University,
Wuhan, Hubei 430072, P.R.China\\
Email: XichengZhang@gmail.com
 }

\thanks{Research of XZ is partially supported by NNSFC grant of China (Nos. 11271294, 11325105).}

\begin{abstract}
  For $d\geq 2$, we prove the existence and uniqueness of heat kernels to the following time-dependent second order diffusion operator with jumps:
  $$
    \sL_t:=\frac{1}{2}\sum_{i,j=1}^d a_{ij}(t,x)\p ^2_{ij}+\sum_{i=1}^{d}b_i(t,x)\p_i+\sL^\kappa_t,
  $$
  where $a=(a_{ij})$ is a uniformly bounded, elliptic, and H\"older continuous matrix-valued function, $b$ belongs to some
 suitable Kato's class,
  and $\sL^\kappa_t$ is a non-local $\alpha$-stable-type operator with bounded kernel $\kappa$. Moreover, we establish sharp two-sided estimates, gradient estimate and fractional derivative estimate for the heat kernel under some mild conditions.

  \bigskip

  \noindent {{\bf AMS 2010 Mathematics Subject Classification:} Primary 35K05, 60J35, 47G20;   Secondary 47D07}

  \noindent{{\bf Keywords and Phrases:} Heat kernel, transition density, non-local operator, Kato class, L\'evy system, gradient estimate}

\end{abstract}

\maketitle

\section{Introduction}

Let $C_0(\mR^d)$ be the Banach space of all continuous functions on $\mR^d$  vanishing at infinity
equipped with uniform norm,
and $C_c(\mR^d)$ the space of all continuous functions on $\mR^d$ with compact support.
 Let $\sL$ be a linear operator on $C_0(\mR^d)$
with domain Dom$(\sL)$. Suppose that $C^\infty_c(\mR^d)\subset\text{Dom}(\sL)$.
We say  $\sL$ satisfies a positive maximum principle if for all
$f\in C^\infty_c(\mR^d)$ reaching a positive maximum at point $x_0\in\mR^d$, then $\sL f(x_0)\leq 0$. The well-known Courr\`ege theorem states that
$\sL$ satisfies the positive maximum principle if and only if $\sL$ takes the following form
\begin{align}\label{e:1.1}
\begin{split}
\sL f(x)&=\frac{1}{2} \sum_{i,j=1}^d a_{ij}(x)\p ^2_{ij} f(x)+\sum_{i=1}^{d}b_i(x)\p_if(x)+c(x)f(x)\\
& \quad+\int_{\mR^d}\left(f(x+z)-f(x)-1_{\{|z|\leq 1\}}z\cdot\nabla f(x)\right)\mu_x(\dif z),
\end{split}
\end{align}
where $a=(a_{ij}(x))_{1\leq i,j\leq d}$ is a $d\times d$-symmetric positive definite matrix-valued measurable function on $\mR^d$,
$b(x): \mR^d\to\mR^d$, $c:\mR^d\to(-\infty,0]$ are measurable functions and $\mu_x(\dif z)$
is a family of L\'evy measures, with that $a,b,c,\mu$ enjoy some continuity with respect to $x$ (see \cite{Jacob}).
On the other hand, from the probabilistic viewpoint, consider the following SDE with jumps:
\begin{align} \label{e:1.2}
\begin{split}
\dif X_t &=\sigma(X_t)\dif W_t+b(X_t)\dif t+\int_{|z|\leq 1}g(X_{t-}, z)\tilde N(\dif t,\dif z)\\
&\quad+\int_{|z|>1}g(X_{t-}, z)N(\dif t,\dif z),\ \ X_0=x,
\end{split}
\end{align}
where $\sigma(x)=\sqrt{a(x)}$, $g(x,z):\mR^d\times\mR^d\to\mR^d$,   $W$ is a $d$-dimensional standard Brownian motion,
while $N$ is a Poisson random measure with intensity measure $\nu$, and $\tilde N$ is the associated compensated Poisson random measure.
Under some Lipschitz assumptions in $x$-variable on $\sigma (x)$, $b (x)$ and
$g(x, z)$,
 it is well knownn that the above SDE admits a unique strong solution, which defines a strong Markov process
whose infinitesimal generator $\sL$ is of the form \eqref{e:1.1} with
 $\mu_x(\dif z)=\nu\circ g^{-1}(x,\cdot)(\dif z)$ (see \cite{Ikeda-Watanabe}).
A natural question is whether SDE \eqref{e:1.2} has a (weak) solution without Lipschitz assumption on $\sigma (x)$, $b (x)$ and  $g(x, z)$, and how about its density.
\medskip

In this work we are concerned with the existence, uniqueness,
 and estimates of fundamental solutions of time-dependent version
of the operator $\sL$ in \eqref{e:1.1},
with minimal regularity assumptions on $a(t, x)$, $b(t, x)$ and
$\kappa (t, x, z)$, where $\kappa(t,x,z):=|z|^{d+\alpha}\mu_{t,x}(\dif z)/\dif z$.
More precisely,
we shall consider the following time-inhomogeneous and non-symmetric non-local operators:
\begin{align}
  \sL_t f(x)&:=\sL^{a}_tf(x)+b_t\cdot \nabla f(x)+\sL^{\kappa}_tf(x),\label{eqL}
\end{align}
where
\begin{align*}
  &\sL^{a}_tf(x):=\frac{1}{2}\sum_{i,j=1}^d a_{ij}(t,x)\p ^2_{ij} f(x),\ b_t\cdot \nabla f(x):=\sum_{i=1}^{d}b_i(t,x)\p_if(x),\\
  &\sL^{\kappa}_tf(x):=\int_{\mR^d}\left(f(x+z)-f(x)-1_{\{|z|\leq 1\}}z\cdot\nabla f(x)\right)\frac{\kappa(t,x,z)}{|z|^{d+\alpha}}\dif z.
\end{align*}
Here $a(t,x):=(a_{ij}(t,x))_{1\leq i,j\leq d}$ is a $d\times d$-symmetric matrix-valued measurable function on $[0,\infty)\times\mR^d$, $b(t,x): [0,\infty)\times\mR^d\to\mR^d$ and $\kappa(t,x,z): [0,\infty)\times\mR^d\times\mR^d\to\mR$ are measurable functions, and $\alpha\in(0,2)$.

\medskip

With different choices of $a, b$ and $\kappa$, we get different types of operators $\mathscr{L}_t$.
For example, when $a =\mathbb{I}_{d\times d}$, $b=0$ and
$\kappa(t,x,z) = \mathcal{A}(d,-\alpha) \kappa$ for some $\kappa>0$,
$\mathscr{L}_t = \frac{1}{2}\Delta+\kappa\Delta^{\alpha/2}$ is the generator of independent sum of Brownian motion and rotational $\alpha$-stable process with weight $\kappa$. Here $\mathcal{A}(d,-\alpha)$ is a positive constant:
$\mathcal{A}(d,-\alpha) := \alpha2^{\alpha-1}\pi^{-d/2}\Gamma((d+\alpha)/2)\Gamma(1-\alpha/2)^{-1}$ and
$\Gamma$ is the Gamma function defined by
$\Gamma(\lambda):= \int_0^\infty t^{\lambda-1}e^{-t}\dif t, \lambda > 0$.
Moreover, the heat kernel of $\frac{1}{2}\Delta+\kappa\Delta^{\alpha/2}$ exists, denoted by
$p^\kappa(t,x,y)=p^\kappa(t,|y-x|)$. It is shown in \cite[Theorem 1.4]{ChenKumagai.2010.RMI551}
(see also \cite[Theorem 2.13]{SongVondravcek.2007.TMJ21} and \cite[Corollary 1.2]{ChenKimSong.2011.JLMS258}) that ,
there are constants $C,\lambda\geq 1$ depending only on $d,\alpha$ such that for all $t > 0$ and $x\in \mathbb{R}^d$,
\begin{equation}
\begin{split}
& C^{-1}\left(t^{-d/2}\wedge(\kappa t)^{-d/\alpha}\right) \wedge \left(t^{-d/2}\e^{-\lambda|x|^2/t}+\tfrac{\kappa t}{|x|^{d+\alpha}}\right) \leq p^\kappa(t,x) \\
  &\qquad\leq C\left(t^{-d/2}\wedge(\kappa t)^{-d/\alpha}\right) \wedge \left(t^{-d/2}\e^{-\lambda^{-1}|x|^2/t}+\tfrac{\kappa t}{|x|^{d+\alpha}}\right).
 \end{split}
 \end{equation}
The above in particular implies that for each $T, M>0$, all $\kappa \in [0, M]$, $t\in (0, T]$ and $x\in \mR^d$,
\begin{equation}\label{eq:paEst}
\begin{split}
& \widetilde C^{-1}\left(t^{-d/2}\e^{-\lambda|x|^2/t}+t^{-d/2}\wedge\tfrac{\kappa t}{|x|^{d+\alpha}}\right) \leq p^\kappa(t,x)
 \leq \widetilde C\left(t^{-d/2}\e^{-\lambda^{-1}|x|^2/t}+t^{-d/2}\wedge\tfrac{\kappa t}{|x|^{d+\alpha}}\right),
 \end{split}
 \end{equation}
where $\widetilde C \geq 1$ depends on $T,M,d,\alpha$.
For notational convenience, define for $\gamma,\lambda\in\mR$, $t>0$ and $x\in\mR^d$,
\begin{align}\label{ETA}
 \xi_{\lambda,\gamma}(t,x):=t^{(\gamma-d)/2}\e^{-\lambda |x|^2/t}
\quad \hbox{and}\quad
  \eta_{\alpha,\gamma}(t,x):=t^{\gamma/2}\big(|x|+t^{1/2}\big)^{-d-\alpha}.
\end{align}
It is easy to check that we can rewrite \eqref{eq:paEst} as
\begin{equation}\label{e:1.8}
 \widehat C^{-1}   \left( \xi_{\lambda, 0}(t,x)+ \kappa \eta_{\alpha,2}(t,x)   \right)
 \leq p^\kappa(t,x) \leq \widehat C  \left( \xi_{\lambda^{-1}, 0}(t,x)+ \kappa \eta_{\alpha,2}(t,x)   \right)
\end{equation}
for some $\widehat C\geq 1$ depending on $T,M,d,\alpha$.

\medskip

When $\kappa(t,x, z) = \mathcal{A}(d,-\alpha)1_{|z|\leq 1}$, $\mathscr{L}^\kappa$ is just the truncated fractional Laplacian
operator $\bar{\Delta}^{\alpha/2}$:
\begin{equation*}
  \bar{\Delta}^{\alpha/2} f(x) = \int_{\{|z|\leq 1\}} \left(f(x+z) -f(x) - z \cdot \nabla f(z)\right) \frac{\mathcal{A}(d,-\alpha)}{|z|^{d+\alpha}} \dif z.
\end{equation*}
It follows from \cite{ChenKimKumagai.2008.MA833} that the heat kernel of $\bar{\Delta}^{\alpha/2}$, denoted
by $\bar{p}_\alpha(t,x,y)=\bar{p}_\alpha(t,x-y)$, exists and it is jointly continuous and has the following estimates: there are constants $C_i = C_i(d,\alpha)>1,~i=1,2$ such that
\begin{equation}\label{eq:pbarIII}
  \begin{split}
    & C_1^{-1}\left(\left(\tfrac{t}{|x|}\right)^{C_2|x|}1_{|x|>1}+\left(t^{-\frac{d}{\alpha}}\wedge\tfrac{t}{|x|^{d+\alpha}}\right)1_{|x|\leq1}\right) \leq \bar{p}_\alpha(t,x) \\
    &\qquad\leq C_1  \left(\left(\tfrac{t}{|x|}\right)^{C^{-1}_2|x|}1_{|x|>1}+\left(t^{-\frac{d}{\alpha}}\wedge\tfrac{t}{|x|^{d+\alpha}}\right)1_{|x|\leq1}\right),\quad t \in (0,1], x \in \mathbb{R}^d.
  \end{split}
\end{equation}

\medskip

Throughout this paper, we assume $d\geq 2$ and make the following assumptions on $a$ and $\kappa$:
\begin{enumerate}
  \item[{(\bf H$^a$)}] There are $c_1>0$ and $\beta\in(0,1)$ such that for all $t>0$ and $x,y\in\mR^d$,
  \begin{align}
    |a(t,y)-a(t,x)|\leq c_1|y-x|^{\beta},\label{eqa2}
  \end{align}
  and for some $c_2\geq 1$,
  \begin{align}
    c_2^{-1}\mI_{d\times d} \leq a(t,x)\leq c_2 \mI_{d\times d}.\label{eqa1}
  \end{align}
  Here $\mI_{d\times d}$ denotes the $d\times d$ identity matrix.
\end{enumerate}
\begin{enumerate}
  \item[{(\bf H$^\kappa$)}] $\kappa(t,x,z)$ is a bounded measurable function and if $\alpha=1$, we require for any $0< r<R<\infty$,
  \begin{align}\label{Sym}
    \int_{r<|z|\leq R}z\kappa(t,x,z)|z|^{-d-1}\dif z=0.
  \end{align}
\end{enumerate}

Let $Z(t, x; s, y)$ be the fundamental solution of $\{\sL^a_t; t\geq 0\}$; see
Theorem \ref{T23} below for details.
Since $\sL_t$ can be viewed as a perturbation of $\sL^a_t$ by $\sL_t^{b, \kappa} :=b\cdot \nabla + \sL^\kappa_t$, heuristically the fundamental solution
(or heat kernel)
$p (t, x; s, y)$
of $\sL_t $ should satisfy the following
Duhamel's formula: for all $0\leq t<s<\infty$ and $x, y\in \mR^d$,
\begin{equation}\label{eqdu}
p(t, x; s, y)=Z(t, x; s, y)+\int_t^s\!\!\! \int_{\mR^d} p(t, x; r, z) \sL^{b, \kappa}_r
Z(r, \cdot ;  s, y) (z) \dif z \dif r,
\end{equation}
or
\begin{equation}\label{eqdu0}
p(t, x; s, y)=Z(t, x; s, y)+\int_t^s\!\!\! \int_{\mR^d} Z(t, x; r, z) \sL^{b, \kappa}_r
p(r, \cdot ;  s, y) (z) \dif z \dif r.
\end{equation}
For any $T\in(0,\infty]$ and $\eps\in[0,T)$, we write
$$
  \mD^T_{\eps}:=\Big\{(t,x;s,y): x,y\in\mR^d \mbox{ and } s,t\geq 0 \mbox{ with } \eps<s-t<T\Big\}.
$$

\medskip

The following  are  the main results of this paper.
See \eqref{Def3} below  for the definition of space-time Kato class $\mK_2$ of functions on $\mR\times \mR^d$.
We will see from Proposition \ref{in}  below that  $\mK_2$ contains
$L^q(\mR;L^p(\mR^d))$ for any
$p, q\in[1,\infty]$
with $\frac{d}p+\frac2{q}<1$.

\bt\label{main1}
Let $\alpha\in(0,2)$. Under {\bf (H$^a$)}, {\bf (H$^\kappa$)} and
$b \in\mK_2$,
there is a unique  continuous function $p(t,x; s, y)$ on $\mD^\infty_0$  that   satisfies \eqref{eqdu},
and
\begin{enumerate}[\rm (1)]
  \item (Upper-bound estimate) For any $T>0$, there exist constants $C_0,\lambda_0>0$ such that on $\mD^T_0$,
      \begin{align}
        |p(t,x;s,y)|\leq C_0(\xi_{\lambda_0,0}+\|\kappa\|_\infty\eta_{\alpha,2})(s-t,y-x).    \label{eqlpe}
      \end{align}
Moreover, the following hold.
  \item (C-K equation) For all $0\leq t<r<s<\infty$ and $x,y\in\mR^d$,
\begin{equation}\label{e:1.13}
\int_{\mR^d} p(t, x; r,  z) p(r, z; s, y) \dif z =p(t, x; s, y).
\end{equation}

  \item (Gradient estimate) For any $T>0$, there exist constants $C_1,\lambda_1>0$ such that on $\mD^T_0$,
      \begin{align}
        |\nabla_x p(t,x;s,y)|\leq C_1(\xi_{\lambda_1,-1}+\|\kappa\|_\infty\eta_{\alpha,1})(s-t,y-x).   \label{eqpge}
      \end{align}
   \item (Fractional derivative estimate) If in addition for $\alpha \in (0,1]$, $b\in \mK_1$ and for $\alpha\in(1,2)$, $b\in\bar\mK_{\alpha}$
   (see \eqref{Def4} below for a definition), then  for any $T>0$,  there exists a constant $C_2>0$ such that  on $\mD^T_0$,
      \begin{align}
        |\Delta^{ {\alpha}/{2}}p(t,\cdot;s,y)(x)|\leq C_2\eta_{\alpha,0}(s-t,y-x).\label{ET311}
      \end{align}
  Meanwhile, equation \eqref{eqdu0} holds on $\mathbb{D}_0^\infty$.
 \item (Conservativeness) For any $0\leq t<s<\infty$ and $x \in \mathbb{R}^d$,
      \begin{equation}\label{eq:conser}
        \int_{\mR^d}p(t,x;s,y)\dif y=1.
      \end{equation}
  \item (Generator) For any $f\in C_b^2(\mR^d)$, we have
      \begin{align}
        P_{t,s}f(x)-f(x)=\int^s_t\!P_{t,r}\sL_rf(x)\dif r,\label{eqge}
      \end{align}
    where  $P_{t,s}f(x):=\int_{\mR^d}p(t,x;s,y)f(y)\dif y.$
  \item (Continuity) For any bounded and uniformly continuous function $f(x)$, we have
      \begin{align}
        \lim_{|t-s|\to 0}\|P_{t,s}f-f\|_\infty=0.\label{cz1}
      \end{align}
\end{enumerate}
\et
\br
Estimate \eqref{ET311} is new even for $\kappa\equiv0$.
\er

Note that in Theorem \ref{main1}, we do not assume $\kappa (t, x, z)\geq 0$ and so the fundamental solution
$p(t, x; s, y)$
can take negative values; see Remark \ref{R:1.4} below.
The following theorem gives the lower bound estimate.

\begin{theorem}\label{thm:posiLow}
  Under the same assumptions of Theorem \ref{main1}, if for each $t>0$ and $x \in \mathbb{R}^d$,
  \begin{equation}\label{eq:kappPosi}
    \kappa(t,x,z) \geq 0,\quad a.e.~ z\in \mathbb{R}^d,
  \end{equation}
  then $p(t,x;s,y) \geq 0$ on $\mD^\infty_0$. Moreover, for any $T > 0$, there are constants $C_3, \lambda_3 > 0$ such that
  \begin{equation}\label{Low}
    p(t,x;s,y)\geq C_3(\xi_{\lambda_3,0}+m_\kappa\eta_{\alpha,2})(s-t,y-x)\ \mbox{on $\mD^T_0$},
  \end{equation}
  where $m_\kappa := \inf_{(t,x)}\essinf_{z\in \mathbb{R}^d} \kappa(t,x,z)$.
\end{theorem}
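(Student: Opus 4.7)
Under \eqref{eq:kappPosi}, $\sL_t$ has the Courr\`ege form \eqref{e:1.1} with $c \equiv 0$ and so satisfies the positive maximum principle. To conclude $p \geq 0$ on $\mD^\infty_0$, I would approximate $\kappa$ by smooth $\kappa_n$ bounded below by a positive constant, construct the associated strong Markov process via the SDE \eqref{e:1.2} with the mollified coefficients, so that its transition density $p_n$ is non-negative by its probabilistic definition, and then pass to the limit $p_n \to p$ using the uniform upper bound \eqref{eqlpe} together with the uniqueness in Theorem \ref{main1}. A PDE alternative is to apply a parabolic maximum principle to $u(t,x) = P_{t,s}f(x)$ for $f \geq 0$ smooth, via the generator identity \eqref{eqge}.

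\textbf{Near-diagonal estimate and Gaussian regime.} The upper bound \eqref{eqlpe} together with conservativeness \eqref{eq:conser} forces, for $R_0$ large enough depending only on $T,d,\alpha,\|\kappa\|_\infty$, that $\int_{|z-x| \leq R_0\sqrt{s-t}} p(t,x;s,z)\,\dif z \geq \tfrac12$ whenever $0 < s-t \leq T$. Iterating \eqref{e:1.13} with $r = (s+t)/2$, restricting $z$ to a ball around the midpoint $(x+y)/2$ and using the upper bound on one of the two factors, gives the standard Aronson-type near-diagonal bound
\[
p(t,x;s,y) \geq c_0 (s-t)^{-d/2} \quad \text{for } |y-x| \leq \eps_0 \sqrt{s-t}.
\]
The Gaussian off-diagonal bound then follows by a chain argument: set $n \sim 1 + |y-x|^2/(s-t)$, pick equally spaced points $x = z_0, \dots, z_n = y$ along the segment $[x,y]$, iterate \eqref{e:1.13} over $n-1$ intermediate balls of radius $\sim \sqrt{(s-t)/n}$, and apply the near-diagonal bound at each step. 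The product of $n$ such factors produces the Gaussian term $c_3 \,\xi_{\lambda_3, 0}(s-t, y-x)$.

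\textbf{Jump regime.} For $|y-x| > M\sqrt{s-t}$ the Gaussian term is negligible and the jump contribution must come from $\kappa$. Using positivity from the first stage and iterating \eqref{eqdu0} once, the $\sL^\kappa$-piece produces a L\'evy-system-type lower bound
\[
p(t,x;s,y) \geq \int_t^s \!\!\int_{\mR^d}\!\!\int_{\mR^d} p(t,x;r,z)\,\frac{\kappa(r,z,z'-z)}{|z'-z|^{d+\alpha}}\, p(r,z';s,y)\, \dif z'\, \dif z\, \dif r.
\]
Restricting $r \in [t+(s-t)/3,\, s-(s-t)/3]$, $|z-x|\leq c\sqrt{s-t}$ and $|z'-y|\leq c\sqrt{s-t}$, using $\kappa \geq m_\kappa$, and applying the near-diagonal bound of the previous step to each of $p(t,x;r,\cdot)$ and $p(r,\cdot;s,y)$, one obtains $\geq c\, m_\kappa (s-t)|y-x|^{-d-\alpha}$; combined with the Gaussian regime this yields \eqref{Low}.

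\textbf{Main obstacle.} The principal difficulty is isolating the non-negative $\sL^\kappa$-piece from the full Duhamel expansion \eqref{eqdu0}: the drift $b\cdot\nabla p$ has no definite sign, and \eqref{eqdu0} involves $p$ on both sides. One must argue via the gradient and fractional derivative estimates \eqref{eqpge}, \eqref{ET311} that on the jump scale the $\sL^\kappa$-contribution dominates the residual drift and diffusion pieces, with constants depending only on $m_\kappa$ (and not on $\|\kappa\|_\infty$). Secondary care is required in the chain argument of the second step to ensure the radii $\sim\sqrt{(s-t)/n}$ are consistent with the threshold $\eps_0$ of the near-diagonal regime at every scale.
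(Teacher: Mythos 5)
Your positivity argument and Gaussian-regime argument are sound and in line with the paper. The PDE alternative you mention for positivity (maximum principle applied to $u = P_{t,s}f$ for $f \geq 0$, under mollified coefficients, with a compactness/uniqueness limit) is essentially what the paper does via Theorem \ref{Le51}; your SDE alternative is plausible but would need additional work to identify the law of the SDE solution with the heat kernel constructed by Levi's method, so the maximum-principle route is the safer one. The on-diagonal bound via conservativeness and \eqref{eq:conser} plus C--K iteration, followed by a chain argument, correctly reproduces the paper's derivation of $p(t,x;s,y) \geq C\xi_{\lambda,0}(s-t,y-x)$.

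The genuine gap is in the jump regime. The inequality
\[
p(t,x;s,y) \geq \int_t^s \!\!\int_{\mR^d}\!\!\int_{\mR^d} p(t,x;r,z)\,\frac{\kappa(r,z,z'-z)}{|z'-z|^{d+\alpha}}\, p(r,z';s,y)\, \dif z'\, \dif z\, \dif r
\]
does not follow from iterating \eqref{eqdu0} as you suggest. In \eqref{eqdu0} the integrand is $Z\,\sL^{b,\kappa}_r p$, and $\sL^{b,\kappa}_r$ contains the full compensated nonlocal operator $\delta^{(\alpha)}_p(z;w) = p(r,z+w;s,y) - p(r,z;s,y) - 1_{\{|w|\leq 1\}}w\cdot\nabla p(r,z;s,y)$ plus the drift term $b\cdot\nabla p$; neither is signed, so the ``$\sL^\kappa$-piece'' cannot simply be isolated as the positive integral you wrote, and the compensation terms cannot be discarded. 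You flag this yourself under ``Main obstacle'' but do not resolve it, and there is no obvious way to resolve it with the gradient and fractional-derivative estimates alone, since those estimates carry constants depending on $\|\kappa\|_\infty$ rather than $m_\kappa$. What the paper actually does is qualitatively different: once positivity is established, $p$ is a genuine transition density and defines a strong Markov (in fact Feller) process $X$ solving the martingale problem for $\sL$. The paper then derives the L\'evy system of $X$ via It\^o's formula (Lemmas \ref{Lemar}, \ref{lem:XLS}), which rigorously yields exactly the kind of ``one-jump'' lower bound you are after, and combines it with an exit-time estimate (Lemma \ref{Le34}) and a hitting-probability estimate (Lemma \ref{lem:firstExitLow}) to obtain the $m_\kappa\eta_{\alpha,2}$ term. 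In other words, the key missing ingredient in your sketch is the L\'evy system --- the probabilistic decomposition along a single large jump --- which the purely analytic Duhamel route cannot deliver.
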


\begin{remark}\label{R:1.4}
  Under the hypothesis of Theorem \ref{thm:posiLow}, if in addition, $\kappa$ satisfies that for each $t\geq 0$,
  \begin{equation*}
    x \mapsto \kappa(t,x,z) \text{ is continuous }\quad a.e.~ z\in \mathbb{R}^d,
  \end{equation*}
  then, we can prove that (\ref{eq:kappPosi}) is also a necessary condition to the positivity of $p(t,x;s,y)$. For example, see the proof of
  \cite[Theorem 1.2]{CW}, or \cite[Lemma 4.5]{ChenHu.2015.SPA2603} or \cite{Wang.2015.MZ521}.
\end{remark}

The following corollary follows immediately from Theorems \ref{main1} and   \ref{thm:posiLow}.
\begin{corollary}\label{cor:2sided}
  Let $\alpha\in(0,2)$. Under {\bf (H$^a$)}, {\bf (H$^\kappa$)}, $b \in\mK_2$ and \eqref{eq:kappPosi},  for every $T>0$,
  there are positive constants $C,\lambda\geq 1$ such that on $\mD^T_0$,
  \begin{align*}
    C^{-1} \Big(\xi_{\lambda,0}+m_\kappa\eta_{\alpha,2}\Big)(s-t,y-x)\leq p(t,x;s,y) \leq C\Big(\xi_{\lambda^{-1},0}+\|\kappa\|_\infty\eta_{\alpha,2}\Big)(s-t,y-x).
  \end{align*}
\end{corollary}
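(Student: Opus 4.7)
The plan is to simply glue together the upper bound from Theorem \ref{main1}(1) and the lower bound from Theorem \ref{thm:posiLow}, once I use the positivity of $p$ to drop the absolute value on the left-hand side of \eqref{eqlpe}. The only work is to reconcile the two sets of constants $(C_0,\lambda_0)$ and $(C_3,\lambda_3)$ so that they can be stated with a single pair $(C,\lambda)$ in the symmetric form of the corollary.

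The first step is to invoke Theorem \ref{thm:posiLow} under the standing assumption \eqref{eq:kappPosi}. It gives both $p(t,x;s,y)\geq 0$ on $\mD^\infty_0$ and the quantitative lower bound
\begin{align*}
p(t,x;s,y)\geq C_3\bigl(\xi_{\lambda_3,0}+m_\kappa\eta_{\alpha,2}\bigr)(s-t,y-x)\qquad\text{on }\mD^T_0.
\end{align*}
Because $p\geq 0$, the bound \eqref{eqlpe} from Theorem \ref{main1}(1) can be rewritten without the absolute value, yielding
\begin{align*}
p(t,x;s,y)\leq C_0\bigl(\xi_{\lambda_0,0}+\|\kappa\|_\infty\eta_{\alpha,2}\bigr)(s-t,y-x)\qquad\text{on }\mD^T_0.
\end{align*}

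The second step is to align constants. Observe that $\lambda\mapsto\xi_{\lambda,0}(t,x)=t^{-d/2}\e^{-\lambda|x|^2/t}$ is monotone decreasing in $\lambda$. Therefore, setting
\begin{align*}
\lambda:=\max\bigl\{\lambda_3,\,\lambda_0^{-1}\bigr\}\qquad\text{and}\qquad C:=\max\bigl\{C_0,\,C_3^{-1}\bigr\},
\end{align*}
one has $\xi_{\lambda,0}\leq\xi_{\lambda_3,0}$ (so the lower bound remains valid with the larger $\lambda$) and $\xi_{\lambda^{-1},0}\geq\xi_{\lambda_0,0}$ (so the upper bound remains valid with the smaller $\lambda^{-1}$). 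Substituting these into the two displayed inequalities yields the claimed sandwich estimate on $\mD^T_0$.

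Since the corollary is a purely formal consequence of the two main theorems, there is no substantive obstacle: all of the hard analytic work (construction of $p$, upper bound via the Levi/Duhamel approach, positivity and lower bound via a Markov/probabilistic argument) has already been carried out in Theorems \ref{main1} and \ref{thm:posiLow}. The only thing to take care of is the mild bookkeeping on $\lambda$ above, and the observation that the $T$-dependence of the constants is inherited from the two theorems being invoked.
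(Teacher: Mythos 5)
Your proposal is correct and matches the paper's own approach: the paper states the corollary "follows immediately from Theorems \ref{main1} and \ref{thm:posiLow}," which is precisely the combination you carry out, with the constant bookkeeping handled in the same monotonicity-of-$\xi_{\lambda,0}$ spirit. The only cosmetic omission is that to meet the stated requirement $C,\lambda\geq 1$ you should replace your definitions with $\lambda:=\max\{\lambda_3,\lambda_0^{-1},1\}$ and $C:=\max\{C_0,C_3^{-1},1\}$, which is harmless since enlarging $\lambda$ and $C$ preserves both inequalities.
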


\vspace{2mm}

In the truncated case, we consider the following two conditions on $\kappa$:
\begin{enumerate}
  \item[{\bf (HU$^\kappa$)}]
$ 0\leq \kappa(t,x,z) \leq \kappa_0 1_{|z|\leq 1}(z)$ for some $\kappa_0>0$.

\medskip

  \item[{\bf (HL$^\kappa$)}] $\kappa(t,x,z) \geq \kappa_0 1_{|z|\leq 1}(z)$ for some $\kappa_0>0$.
\end{enumerate}

\begin{theorem}\label{thm:trunc}
  Suppose that {\bf (H$^a$)}, {\bf (H$^\kappa$)} and $b\in \mathbb{K}_2$ hold. Let $T>0$, and for $\lambda>0$, set
  \begin{align}\label{Eta}
  \bar\eta_{\alpha,\lambda}(t,x):=t\big(|x|+t^{1/2}\big)^{-(d+\alpha)}1_{|x|\leq 1/2}+(t/|x|)^{\lambda |x|}1_{|x|>1/2}.
  \end{align}
  \begin{enumerate}[\rm (i)]
    \item If in addition $\kappa$ satisfies {\bf (HU$^\kappa$)}, then there are constants $C_1,\lambda_1> 0$ such that
      \begin{equation*}
        p(t,x;s,y) \leq C_1 \left(\xi_{\lambda_1,0} + \bar{\eta}_{\alpha,1/8}\right)(s-t,y-x)
				\quad \text{on } \mathbb{D}^T_0.
      \end{equation*}
    \item If in addition $\kappa$ satisfies {\bf (HL$^\kappa$)}, then there are constants $C_2,\lambda_2 > 0$ such that
      \begin{equation*}
        p(t,x;s,y) \geq C_2 \left(\xi_{\lambda_2,0} + \bar{\eta}_{\alpha,8}\right)(s-t,y-x)
				\quad \hbox{on } \mathbb{D}^T_0.
      \end{equation*}
  \end{enumerate}
\end{theorem}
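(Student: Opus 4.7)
The plan is to upgrade the bounds of Theorem~\ref{main1} and Corollary~\ref{cor:2sided} by exploiting the truncation at $|z|\leq 1$ built into (HU$^\kappa$) and (HL$^\kappa$). For the truncated stable operator alone, \eqref{eq:pbarIII} shows that long-range propagation is penalized like $(t/|x|)^{c|x|}$, and the target is to inherit this feature for the full $p$.

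For part (i) I would revisit the Levi--Duhamel construction of $p$ used in the proof of Theorem~\ref{main1}, but replace the pure-diffusion reference kernel $Z$ by the heat kernel $\widetilde{p}$ of the operator $\sL^a_t+\bar{\sL}^{\kappa_0}_t$, where $\bar{\sL}^{\kappa_0}_t$ is the constant-coefficient truncated operator with jump kernel $\kappa_0\mathbf{1}_{|z|\leq 1}/|z|^{d+\alpha}$. Standard parametrix arguments combined with \eqref{eq:pbarIII} give that $\widetilde{p}$ satisfies a two-sided bound of shape $\xi_{\lambda,0}+\bar{\eta}_{\alpha,c}$. Under (HU$^\kappa$) the jump kernel of $\sL_t$ is pointwise dominated by that of $\bar{\sL}^{\kappa_0}_t$, so the remainder $\sL_t-(\sL^a_t+\bar{\sL}^{\kappa_0}_t)$ acts as a ``bounded and truncated'' perturbation: its integral part is supported in $\{|z|\leq 1\}$. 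Iterating the Duhamel expansion of $p$ relative to $\widetilde{p}$, the $n$-th term is a convolution of $n$ truncated kernels supported in the unit ball, which can contribute to $|y-x|>1/2$ only if $n\gtrsim 2|y-x|$. Summing with Stirling's estimate $n!\sim(n/e)^n$ then yields a tail bounded by $(C(s-t)/|y-x|)^{|y-x|/8}$, matching $\bar{\eta}_{\alpha,1/8}$. The drift $b\in\mK_2$ is absorbed into an additional Gaussian factor using the same $\mK_2$ machinery employed in the proof of Theorem~\ref{main1}.

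For part (ii) the hypothesis gives $m_\kappa\geq\kappa_0>0$ on $\{|z|\leq 1\}$, so Corollary~\ref{cor:2sided} already yields
\begin{equation*}
p(t,x;s,y)\geq C(\xi_{\lambda,0}+\kappa_0\eta_{\alpha,2})(s-t,y-x),
\end{equation*}
which suffices in the regime $|y-x|\leq 1/2$. For $|y-x|>1/2$ the plan is a standard chaining argument: choose $n\sim 2|y-x|$ equally spaced times $t=t_0<t_1<\cdots<t_n=s$ and points $x=y_0,y_1,\ldots,y_n=y$ on the segment from $x$ to $y$ with $|y_k-y_{k-1}|\leq 1/2$, then apply the Chapman--Kolmogorov identity \eqref{e:1.13} and bound each factor from below using the local two-sided estimate on a small ball around $y_k$. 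The resulting product is of Poisson type with intensity $\propto\kappa_0$, and a short computation converts it to the stated $(t/|y-x|)^{8|y-x|}$ lower bound, i.e.\ $\bar{\eta}_{\alpha,8}$.

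The main obstacle will be the bookkeeping in part (i): tracking the constants that propagate through the iterated Levi series carefully enough to recover the specific exponent $1/8$, and simultaneously checking that the Kato-class drift does not spoil the truncated tail. In part (ii) the corresponding delicacy is to verify that the chaining lower bound survives the presence of $b\in\mK_2$, which can be arranged by a Girsanov-type argument along the lines of the proof of Theorem~\ref{thm:posiLow}.
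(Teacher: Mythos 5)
Your part (ii) is essentially the paper's chaining argument (Lemma~\ref{thmhkFiniteLow}): for $|y-x|>1/2$ choose $n\asymp 4|y-x|$ intermediate points on the segment, use the local lower bound and Chapman--Kolmogorov, and sum the exponents. One small correction: the paper needs no separate Girsanov step for the drift — the Gaussian lower bound $\xi_{\lambda_2,0}$ is already available from \eqref{eq:hkLowExp}, which was established for $b\in\mK_2$ in Section~\ref{S:4.2}; the chaining supplements it in the regime $|y-x|>1/2$.

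Your part (i), however, is a genuinely different route and as stated has a serious gap. The paper does \emph{not} rebuild $p$ by a Levi parametrix around $\sL^a_t+\bar\sL^{\kappa_0}_t$. Instead it works probabilistically with the Feller process $X$ already attached to $p$: since the jump kernel is supported in $\{|z|\leq 1\}$, the L\'{e}vy system gives $\mP_{t,x}(X_\tau\in B(x,2)^c)=0$ for the exit time $\tau$ of $B(x,1)$, and an induction on $n$ (combined with the exit-time estimate \eqref{eq:firstExitUp2}) yields $p(t,x;s,y)\leq(c_1(s-t)/n)^n$ whenever $|y-x|\geq 2n$. That argument uses only estimates already in hand. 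Your version requires (a) constructing the heat kernel $\widetilde p$ of $\sL^a_t+\bar\sL^{\kappa_0}_t$ with variable H\"older coefficient $a(t,x)$ and proving it has two-sided bounds of shape $\xi_{\lambda,0}+\bar\eta_{\alpha,c}$ — this is a nontrivial theorem of roughly the same difficulty as the result you are trying to prove, and the paper does not provide it; estimate \eqref{eq:pbarIII} is only for the constant-coefficient $\bar\Delta^{\alpha/2}$ alone, not for $\tfrac12\Delta+\bar\Delta^{\alpha/2}$, and not for $\sL^a_t+\bar\sL^{\kappa_0}_t$; and (b) the claim that the $n$-th Duhamel iterate ``is a convolution of $n$ truncated kernels supported in the unit ball'' is simply false: each iterate also convolves with $\widetilde p$, which has Gaussian (unbounded) tails, so the iterates are not compactly supported and contribute at every distance. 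The combinatorial ``one jump moves you at most distance $1$'' bookkeeping does not live in the parametrix series; it lives in the sample paths of the process, which is precisely why the paper's argument is phrased via the L\'evy system and strong Markov property rather than via the Levi expansion.

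A secondary issue: the remainder $\sL^\kappa_t-\bar\sL^{\kappa_0}_t$ has a \emph{signed} jump density (nonpositive on $|z|\leq 1$), which is not fatal for Duhamel iteration but means you cannot appeal to positivity anywhere in the iterates; this makes closing the series estimate with the Stirling factor $n!\sim(n/e)^n$ delicate, and you would still have to prove that the sum is actually $p$, i.e.\ that your alternative parametrix representation agrees with the kernel constructed in Theorem~\ref{main1}. None of this is insurmountable in principle, but it is a long detour compared to the paper's short probabilistic induction, and the crucial support claim needs to be replaced by something correct before the argument stands.
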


Heat kernel analysis takes an important place in PDE and in probability theory,
as heat kernel encodes all the information about the corresponding generator and
the corresponding Markov processes.
Since explicit formula can only be derived in some very special and limited  cases,
the main focus of the heat kernel analysis is on its sharp estimates. While it is relatively
easy to get some crude bounds, obtaining sharp two-sided bounds on the heat kernel
is typically quite  delicate and challenging. It requires deep understanding of
the corresponding generator.
For second order elliptic operators and  diffusion process, a lot is known and
there are many beautiful results .
For instance, the celebrated
Aronson's estimate \cite{Aronson1968}
asserts that
the heat kernel
for uniformly  elliptic operators of divergence form
with measurable coefficients  has
two-sided Gaussian-type bounds.
Aronson's estimate also holds for non-divergence form elliptic operators
with H\"older continuous coefficients; see Theorem \ref{T23} below.

\medskip

 The study of heat kernel for non-local operators is relatively recent,
 propelled by interest  in discontinuous Markov processes,
 as many physical, engineering and social phenomena  can be successfully
 modeled by using discontinuous Markov processes including L\'evy processes.
 The infinitesimal generators of discontinuous Markov processes are non-local
 operators.
During the past
  several years  there
  is also many interest
  from the theory of PDE (such as singular obstacle problems) to study
non-local operators; see, for example,
 \cite{CSS2008, Silvestre2007} and the references
therein. Quite many progress has been made in the last fifteen years on
the  development of the
   DeGiorgi-Nash-Moser-Aronson type  theory for   symmetric non-local operators.
   For example,    Kolokoltsov  \cite{Kolokoltsov2000} obtained two-sided heat kernel
estimates for certain stable-like processes in $\mR^d$, whose
infinitesimal generators are   a class of pseudo-differential
operators having smooth symbols.
 Bass and Levin
\cite{BL2002.TAMS} used a completely different approach to obtain similar
estimates for discrete time Markov chain on $\mZ^d$, where the
conductance between $x$ and $y$ is comparable to $|x-y|^{-n-\alpha}$
for $\alpha \in (0,2)$.
In Chen and Kumagai \cite{CK03},
two-sided heat kernel
estimates and a scale-invariant parabolic Harnack inequality (PHI in abbreviation) for
symmetric $\alpha$-stable-like processes on $d$-sets are obtained.
 Recently in \cite{CK08},
two-sided heat kernel estimates and PHI are
established for symmetric non-local operators of variable order.
The DeGiorgi-Nash-Moser-Aronson type  theory is studied very recently in
Chen and Kumagai \cite{ChenKumagai.2010.RMI551} for symmetric diffusions with jumps.
  We refer the reader to the survey articles \cite{Chen2009, GHL2014} and the references therein on the study of
  heat kernels for symmetric non-local operators.
   However,   for non-symmetric non-local operators,  much less is known.
In \cite{BogdanJakubowski.2007.CMP179}, Bogdan and Jakubowski considered a fundamental solution to
the non-local operator $\Delta^{\alpha/2}+b(x)\cdot\nabla$ with $\alpha\in(1,2)$ and
$b$ belonging to some Kato's class, and obtained its sharp two-sided estimates.
 The uniqueness of fundamental solution to $\Delta^{\alpha/2}+b(x)\cdot\nabla$
 and its connection to stable processes with drifts are settled in Chen and Wang \cite{ChenLWang}.
 In \cite{Xie-Zhang}, Xie and Zhang studied
the critical case
$a(t, x)\Delta^{1/2}+b(t, x)\cdot\nabla$.
Heat kernels for subordinate Brownian motions with drifts have been studied in  \cite{ChenHu.2015.SPA2603}
and \cite{ChenDou}.
Chen and Wang \cite{CW} studied heat kernel estimates for $\Delta^{\alpha/2}$ under non-local perturbation,
while Wang \cite{Wang.2015.MZ521} investigated heat kernel for $\Delta$ perturbed by non-local operators.
Recently,  Chen and Zhang \cite{Chen-Zhang} obtained sharp two-sided estimates, gradient estimate
and fractional derivative estimate of the heat kernel
for general  non-local and non-symmetric operator $\sL^\kappa$ with $\kappa(t,x,z)=\kappa(x,z)$ by using Levi's parametrix method.

\medskip

In this paper, we concentrate on the study of heat kernel for non-symmetric operators $\sL$ of
type \eqref{eqL},
which have both diffusive and non-local parts . When $\kappa (t, x, y) \geq 0$,
its fundamental solution $p(t, x; s,y)$ becomes a family of transition density and so it determines a Feller process $X$
having strong Feller property. Clearly, the law of $X$ is  a solution to the martingale problem for $(\sL, C^2_c (\mR^d))$.
Is the solution to the martingale problem for $(\sL, C^2_c (\mR^d))$ unique?
It is also tempting to ask that when
$ \kappa (t, x, z)/|z|^{d+\alpha}$ is of the form $\nu \circ g^{-1} (t, x, \cdot) (\dif z)$
for some $g(t, x,z):\mR^d\times\mR^d\to\mR^d$ and a $\sigma$-finite measure $\nu$ on
$\mR^d\setminus \{ 0\}$,  whether this Feller process $X$ satisfies the following
 SDE:
\begin{align}
\dif X_t &=\sigma(t, X_t)\dif W_t+b(t, X_t)\dif t+\int_{|z|\leq 1}g(t, X_{t-}, z)\tilde N(\dif t,\dif z) \nonumber \\
&\quad+\int_{|z|>1}g(t, X_{t-}, z)N(\dif t,\dif z),\ \ X_0=x,  \label{e:1.24}
\end{align}
where
$\sigma(t, x)=\sqrt{a(t, x)}$,
$W$ is a $d$-dimensional standard Brownian motion,
$N$ is a Poisson random measure with intensity measure $\nu$, and $\tilde N$ is the associated compensated Poisson random measure?
We plan to address these questions in a separate work.

\medskip

The rest of the  paper is organized as follows.
 In Section 2, we present some key estimates that will be used later. In Section 3,
we prove our main result Theorem \ref{main1}.  The main  crux of work
is on various gradient and fractional derivative estimates, which is crucial for the iteration
procedure and rigorously establishing the Duhamel's formula.
In Section 4, we first show the positivity of $p(t,x;s,y)$ by the maximum principle under the non-negativeness of $\kappa$.
We then derive the lower bound estimate by a
probabilistic approach after obtaining the on-diagonal estimate of $p(t,x;s,y)$.
In Section 5, we consider the truncated case. In the Appendix, we show a maximum principle and
derive two-sided Aronson-type Gaussian estimates for heat kernels of  time-dependent second-order
elliptic differential operators.

\medskip

We conclude this introduction by mentioning some conventions that will be used throughout this paper. The letter $C$ or $c$ with or without subscripts will denote an unimportant constant.
For two quantities $f$ and $g$, $f\asymp g$ means that $C^{-1} f\leq f\leq C g$ for some $C\geq 1$, and
$f\preceq g$ means that $f\leq C g$  for some $C\geq 1$. The letter $\mathbb{N}$ will denote the collection of positive integers, and $\mathbb{N}_0 := \mathbb{N} \cup \{0\}$.

\section{Preliminaries}

\subsection{Basic estimates}
We first prove the following elementary but important estimates (which can also be called $3P$-inequalities)
for later use.
 Recall that the functions $\xi_{\lambda, \gamma}$ and $\eta_{\alpha, \gamma}$ are defined
in \eqref{ETA}.

\begin{lemma}\label{Le21}  \begin{enumerate}[\rm (i)]
    \item  For any $\alpha\in(0,+\infty)$ and $\lambda>0$, there exist positive constants $C_1=C_1(d,\alpha,\lambda)$ and $C_2=C_2(d,\alpha)$ such that for all $t>0$ and $x\in\mR^d$,
        \begin{align}
          \xi_{\lambda,0}(t,x)\leq C_1 \eta_{\alpha,\alpha}(t,x),\label{eqcom}
        \end{align}
        and for all $\gamma\geq 0$ and $t>0$,
        \begin{align}
          \int_{\mR^d}\eta_{\alpha,\gamma}(t,x)\dif x\leq C_2t^{(\gamma-\alpha)/2},\label{ineq}
        \end{align}
        and for some $C_3=C_3(d,\alpha)>0$
        and all $t,s>0$, $x\in\mR^d$ and $\gamma\in[0,\alpha]$,
         \begin{equation}\label{eq:etaSmpgLower}
          \int_{\mathbb{R}^d} \eta_{\alpha,\gamma}(t,x-z)\eta_{\alpha,\alpha}(s,z) \dif z \geq C_3 \eta_{\alpha,\gamma}(t+s,x).
        \end{equation}
    \item For any $0<\alpha \leq\beta$ and for all $t,s>0$, $x,y\in\mR^d$, we have
        \begin{align}
          \eta_{\alpha,0}(t,x)\eta_{\beta,0}(s,y)\leq 2^{d+\alpha} \left(\eta_{\beta,0}(t,x)+\eta_{\beta,0}(s,y)\right)\eta_{\alpha,0}(t+s,x+y). \label{eq3p}
        \end{align}
        Moreover,  there is a constant $C_4=C_4(d,\alpha,\beta)>0$ such that for all $\gamma_1,\gamma_2>\beta-2$,
        \begin{align}
          \begin{split}
            &\int^s_t\!\!\!\int_{\mR^d}\eta_{\alpha,\gamma_1}(s-r,y-z)\eta_{\beta,\gamma_2}(r-t,z-x)\dif z\dif r\\
            &\qquad\leq C_4\cB\big(\tfrac{\gamma_1-\beta}{2}+1,\tfrac{\gamma_2-\beta}{2}+1\big) \eta_{\alpha, 2+\gamma_1+\gamma_2-\beta}(s-t,y-x) ,\label{3P}
          \end{split}
        \end{align}
        where $\cB(\beta,\gamma):=\int^1_0(1-s)^{\beta-1}s^{\gamma-1}\dif s$ is the usual Beta function.

    \item For any $\alpha\in(0,2)$, there exists a constant $C_5=C_5(d,\alpha,\lambda)>0$ such that for all $\gamma_1>-2$ and
    $\gamma_2>\alpha-2$,
        \begin{align}
          &\int^s_t\!\!\!\int_{\mR^d}\xi_{\lambda,\gamma_1}(r-t,z-x)\eta_{\alpha,\gamma_2}(s-r,y-z)\dif z\dif r\no\\
          &\qquad\leq C_5\cB\big(\tfrac{\gamma_1}{2}+1,\tfrac{\gamma_2-\alpha}{2}+1\big) \eta_{\alpha,2+\gamma_1+\gamma_2}(s-t,y-x).\label{UY3}
        \end{align}
    \item For any $\lambda>0$, we have
        \begin{align}\label{CKE}
          \int_{\mR^d}\xi_{\lambda,0}(t,x-y)\xi_{\lambda,0}(s,y)\dif y=(\pi\lambda^{-1})^{{d/2}}\xi_{\lambda,0}(t+s,x),
        \end{align}
        and for all $\gamma_1,\gamma_2>-2$,
        \begin{align}
          &\int^s_t\!\!\!\int_{\mR^d}\xi_{\lambda,\gamma_1}(r-t,z-x)\xi_{\lambda,\gamma_2}(s-r,y-z)\dif z\dif r\no\\
          &\qquad=(\pi\lambda^{-1})^{{d/2}} \cB\big(\tfrac{\gamma_1}{2}+1,\tfrac{\gamma_2}{2}+1\big) \xi_{\lambda,2+\gamma_1+\gamma_2}(s-t,y-x).\label{UY30}
        \end{align}
  \end{enumerate}
\end{lemma}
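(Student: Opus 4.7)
The plan is to establish all four parts by elementary manipulations---case analysis on the relative sizes of $|x|$, $\sqrt t$, $\sqrt s$, polar coordinates, and the standard Beta integral identity---organized so that each later part reduces to an earlier one. For part (i), I would prove $\xi_{\lambda,0}\leq C_1\eta_{\alpha,\alpha}$ by splitting into $|x|\leq\sqrt t$ (where $e^{-\lambda|x|^2/t}\leq 1$ and $|x|+\sqrt t\leq 2\sqrt t$) and $|x|>\sqrt t$ (where $e^{-u}\leq c_N u^{-N}$ with $N=(d+\alpha)/2$ applied to $u=\lambda|x|^2/t$ produces exactly the needed polynomial decay). The integral estimate \eqref{ineq} reduces, by polar coordinates and the substitution $r=\sqrt t\,u$, to $\int_0^\infty u^{d-1}(1+u)^{-d-\alpha}\,du<\infty$. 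For the lower bound \eqref{eq:etaSmpgLower}, I would restrict the integration to a ball of radius $\sqrt{t\wedge s}$ centered either at $0$ or at $x$: when $t\geq s$, the choice $\{|z|\leq\sqrt s\}$ gives $\eta_{\alpha,\alpha}(s,z)\gtrsim s^{-d/2}$ and $\eta_{\alpha,\gamma}(t,x-z)\gtrsim t^{\gamma/2}(|x|+\sqrt{t+s})^{-d-\alpha}$, while when $t<s$ the choice $\{|x-z|\leq\sqrt t\}$ produces the prefactor $t^{(\gamma-\alpha)/2}s^{\alpha/2}$, and the hypothesis $\gamma\in[0,\alpha]$ makes this bounded below by $(t+s)^{\gamma/2}$.

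For part (ii), the pointwise 3P inequality \eqref{eq3p} reduces, after setting $A:=|x|+\sqrt t$ and $B:=|y|+\sqrt s$ so that $|x+y|+\sqrt{t+s}\leq A+B$, to the elementary claim $A^{-(d+\alpha)}B^{-(d+\beta)}(A+B)^{d+\alpha}\leq 2^{d+\alpha}(A^{-(d+\beta)}+B^{-(d+\beta)})$, which I would verify by treating $A\geq B$ and $A<B$ separately; the hypothesis $\alpha\leq\beta$ enters only in the second case, through $B^{\alpha-\beta}\leq A^{\alpha-\beta}$. The convolution estimate \eqref{3P} then follows by inserting this pointwise bound into the integrand, integrating in $z$ via \eqref{ineq} (which absorbs a power $\beta/2$ from whichever of the two $\eta_{\beta,0}$ factors is retained), and converting the remaining $dr$ integral to a Beta function via $u=(r-t)/(s-t)$. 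Part (iii) is then immediate from $\xi_{\lambda,\gamma_1}\leq C\,\eta_{\alpha,\gamma_1+\alpha}$ (obtained by multiplying \eqref{eqcom} by $\sigma^{\gamma_1/2}$) combined with \eqref{3P} specialized to $\beta=\alpha$; the exponents collapse to $2+\gamma_1+\gamma_2$.

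For part (iv), the identity \eqref{CKE} is a direct Gaussian calculation: completing the square in $y$ inside $\lambda|x-y|^2/t+\lambda|y|^2/s$ isolates the factor $e^{-\lambda|x|^2/(t+s)}$ along with a pure Gaussian in $y-sx/(s+t)$ of covariance parameter $ts/(\lambda(t+s))$, whose spatial integral equals $(\pi/\lambda)^{d/2}$. Inserting \eqref{CKE} inside the $dz$ integral of \eqref{UY30} and applying $u=(r-t)/(s-t)$ to the leftover $dr$ integral then produces the Beta factor. I expect the main obstacle to be the lower bound \eqref{eq:etaSmpgLower}: it is the only genuinely non-trivial lower estimate in the lemma, and unlike for upper bounds one cannot pass from $|x-z|$ to $|x|+|z|$ in a denominator, so the proof must instead pick a region of integration adapted to whether $t\geq s$ or $t<s$ and invoke the constraint $\gamma\in[0,\alpha]$ to balance the powers of $t$, $s$, and $t+s$.
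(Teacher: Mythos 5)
Your proposal is correct and follows essentially the same route as the paper: the same case split $|x|\lessgtr\sqrt t$ for \eqref{eqcom}, the same reduction of \eqref{eq3p} to an algebraic inequality between $A=|x|+\sqrt t$ and $B=|y|+\sqrt s$ using $\alpha\leq\beta$, integration of the 3P bound against \eqref{ineq} plus a Beta substitution for \eqref{3P}, and deducing (iii) from (ii) via \eqref{eqcom}. The only cosmetic difference is in \eqref{eq:etaSmpgLower}, where the paper first reduces to $\gamma=\alpha$ (using $\gamma\leq\alpha$) and invokes symmetry in $(t,s)$, whereas you handle $t\geq s$ and $t<s$ directly by choosing the integration ball; both are sound and rest on the same estimates.
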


\begin{proof}
  (i) If $|x|\leq t^{{1 /2}}$, then
  $$
    \xi_{\lambda,0}(t,x)\leq t^{-{d/2}}\leq 2^{d+\alpha}\eta_{\alpha,\alpha}(t,x).
  $$
  If $|x|>t^{{1 /2}}$, then
  $$
    \xi_{\lambda,0}(t,x)=t^{{\alpha /2}}|x|^{-(d+\alpha)}\left(|x|^2/t\right)^{(d+\alpha)/2} \e^{-\lambda|x|^2/t} \preceq t^{{\alpha /2}}|x|^{-(d+\alpha)}\preceq \eta_{\alpha,\alpha}(t,x).
  $$
  Moreover, we have
  \begin{align*}
    \int_{\mR^d}\eta_{\alpha,\gamma}(t,x)\dif x\leq t^{\gamma/2}\left(\int_{|x|\leq t^{1/2}}
t^{-\frac{d+\alpha}{2}}\dif x + \int_{|x|> t^{1/2}}|x|^{-d-\alpha}\dif x\right)\preceq t^{(\gamma-\alpha)/2}.
  \end{align*}
  To prove \eqref{eq:etaSmpgLower},  it suffices to show it for $\gamma=\alpha$. Thus, by symmetry we may assume $s\leq t$.
  Noticing that for $|z| \leq s^{1/2}$,
  \begin{equation*}
    |x-z|+t^{1/2} \leq |x|+|z|+t^{1/2} \leq |x|+2(t+s)^{1/2},
  \end{equation*}
  we have
  \begin{align*}
    &\int_{\mathbb{R}^d} \eta_{\alpha,\alpha}(t,x-z)\eta_{\alpha,\alpha}(s,z) \dif z\geq  \int_{|z|\leq s^{1/2}} \frac{t^{\alpha/2}}{(|x-z|+t^{1/2})^{d+\alpha}}
    \eta_{\alpha,\alpha}(s,z) \dif z\\
    &\geq ~ \frac{2^{-\alpha/2}(t+s)^{\alpha/2}}{(|x|+2(t+s)^{1/2})^{d+\alpha}}\int_{|z|\leq s^{1/2}} \eta_{\alpha,\alpha}(s,z) \dif z
    \geq ~ \frac{2^{-\alpha/2}}{2^{d+\alpha}}\eta_{\alpha,\alpha}(t+s,x)\int_{|z|\leq 1} \eta_{\alpha,\alpha}(1,z) \dif z.
  \end{align*}
  (ii) Estimate \eqref{eq3p} follows by the following easy inequality:
  \begin{align*}
    &\big(|x+y|+(t+s)^{1/2}\big)^{d+\alpha}\leq 2^{d+\alpha-1}\Big\{\big(|x|+t^{1/2}\big)^{d+\alpha}+\big(|y|+s^{1/2}\big)^{d+\alpha}\Big\}\no\\
    &\qquad\leq2^{d+\alpha}\Big\{\big(|x|+t^{1/2}\big)^{d+\alpha}+\big(|x|+t^{1/2}\big)^{\alpha-\beta}\big(|y|+s^{1/2}\big)^{d+\beta}\Big\},
  \end{align*}
  where the second inequality is due to $b^{\alpha}\leq a^{\alpha}+a^{\alpha-\beta} b^{\beta}$ for $0\leq \alpha\leq\beta$ and $a,b\geq 0$.
  Moreover, by (\ref{eq3p}) and (\ref{ineq}), we have
  \begin{eqnarray*}
    &&\int^s_t\!\!\!\int_{\mR^d}\eta_{\alpha,\gamma_1}(s-r,y-z)\eta_{\beta,\gamma_2}(r-t,z-x)\dif z\dif r \leq 2^{d+\alpha}\eta_{\alpha,0}(s-t,y-x)\\
    &&\times \int^s_t(s-r)^{\gamma_1/2}(r-t)^{\gamma_2/2}\!\!\!\int_{\mR^d}\Big(\eta_{\beta,0}(s-r,y-z)+\eta_{\beta,0}(r-t,z-x)\Big)\dif z\dif r\\
    &\preceq &\eta_{\alpha,0}(s-t,y-x)\int^s_t\left((s-r)^{(\gamma_1-\beta)/2}(r-t)^{\gamma_2/2} +(s-r)^{\gamma_1/2}(r-t)^{(\gamma_2-\beta)/2}\right)\dif r\\
    &\preceq &\eta_{\alpha,2+\gamma_1+\gamma_2-\beta}(s-t,y-x) \left(\cB(\tfrac{\gamma_1-\beta}{2}+1,\tfrac{\gamma_2}{2}+1) + \cB(\tfrac{\gamma_1}{2}+1,\tfrac{\gamma_2-\beta}{2}+1)\right)\\
    &\preceq & \cB(\tfrac{\gamma_1-\beta}{2}+1,\tfrac{\gamma_2-\beta}{2}+1) \eta_{\alpha,2+\gamma_1+\gamma_2-\beta}(s-t,y-x) .
  \end{eqnarray*}
  (iii) It follows by \eqref{eqcom} with $\xi_{\lambda,\gamma_1}(t,x)
	\leq C_1 \eta_{\alpha,\alpha+\gamma_1}(t,x)$ and \eqref{3P} with $\beta=\alpha$.
	\medskip\\
  (iv) It follows by Chapman-Kolmogorov's equation for Brownian transition density function.
\end{proof}

\subsection{Fractional derivative estimates of Gaussian kernel}
For $\alpha\in(0,2)$, set
$$
z^{(\alpha)}:=z1_{\alpha\in(1,2)}+z1_{|z|\leq 1}1_{\alpha=1}.
$$
Let  $J:\mR^d\to\mR$ be a bounded measurable function. For a function $f(x)$ on $\mR^d$, define
\begin{align}
  \widetilde\sL^{J}f(x):=\int_{\mR^d}\delta^{(\alpha)}_f(x;z) J(z)|z|^{-d-\alpha}\dif z,\label{j}
\end{align}
where
\begin{align}
  \delta^{(\alpha)}_f(x;z):=f(x+z)-f(x)-z^{(\alpha)}\cdot\nabla f(x).\label{de}
\end{align}
The following lemma will play an important role in the sequel.
\bl\label{Le22}
  Given $\alpha\in(0,2)$, let $J:\mR^d\to\mR$ be a bounded measurable function with
  \begin{align}\label{Sym1}
    \int_{r<|z|\leq R}z\cdot J(z)|z|^{-d-1}\dif z=0,\quad 0<r<R<\infty.
  \end{align}
  Let $T > 0$ and  $G_t(x):(0,T)\times\mR^d\to\mR$ be a $C^2$ function in $x$. Suppose that for each $j=0,1,2$, there are $C_j>0$ and $\beta_j\geq0$ such that for $t \in (0,T)$ and $x \in \mathbb{R}^d$,
  \begin{align}\label{TR1}
    |\nabla^jG_t(x)|\leq C_j\eta_{\alpha,\alpha-\beta_j-j}(t,x).
  \end{align}
  Then for any $\gamma\in[0,(2-\alpha)\wedge 1)$, there exists a constant $C=C(\gamma, d,\alpha)>0$ such that
  \begin{align}
    \big|\widetilde\sL^{J}G_t(x)\big|\leq C\|J\|_\infty\Big(C_0t^{-\frac{\beta_0}{2}}+ C_1t^{-\frac{\beta_1}{2}}+C^\gamma_1C^{1-\gamma}_2 t^{-\frac{\gamma\beta_1+(1-\gamma)\beta_2}{2}}\Big) \eta_{\alpha,0}(t,x).\label{eqfrG0}
  \end{align}
\el
\begin{proof}
 By definition \eqref{j}-\eqref{de},
\eqref{Sym1} and \eqref{Sym}, we have
  \begin{align*}
    \big|\widetilde\sL^{J}G_t(x)\big|
    &\leq\|J\|_\infty\Bigg[\int_{|z|\leq t^{1/2}}|\delta^{(\alpha)}_{G_t}(x;z)|\cdot|z|^{-d-\alpha}\dif z
    +\int_{|z|>t^{1/2}}|G_t(x+z)|\cdot|z|^{-d-\alpha}\dif z\\
    &\quad+|G_t(x)|\int_{|z|>t^{1/2}}|z|^{-d-\alpha}\dif z +1_{\alpha\in(1,2)}|\nabla G_t(x)|\int_{|z|>t^{1/2}}|z|\cdot|z|^{-d-\alpha}\dif z\Bigg]\\
    &=:\|J\|_\infty\Big[I_1+I_2+I_3+I_4\Big].
  \end{align*}
  Notice that for $\alpha\in(0,1)$,
  $$
    \delta^{(\alpha)}_{ G_t}(x;z)=\int_0^1\<z, \nabla  G_t(x+\theta z)\>\dif \theta,
  $$
  and for $\alpha\in[1,2)$,
  $$
    \delta^{(\alpha)}_{ G_t}(x;z)=\int_0^1\!\!\!\int_0^1\theta\<z\otimes z, \nabla^2 G_t(x+\theta'\theta z)\>\dif \theta'\dif \theta.
  $$
  By \eqref{TR1}, we have for all $|z|\leq t^{1/2}$,
  \begin{align*}
    |\delta^{(\alpha)}_{ G_t}(x;z)|&\leq|z|\int_0^1|\nabla G_t(x+\theta z)|\dif\theta+1_{\alpha\in[1,2)}|z|\cdot|\nabla G_t(x)|\\
    &\leq C_1\left(\int_0^1\eta_{\alpha,\alpha-\beta_1-1}(t,x+\theta z)\dif \theta+1_{\alpha\in[1,2)}\eta_{\alpha,\alpha-\beta_1-1}(t,x)\right)|z|\\
    &\leq 2^{d+\alpha}C_1\, \eta_{\alpha,\alpha-\beta_1-1}(t,x)\,|z|,
  \end{align*}
  and if $\alpha\in[1,2)$, we alternatively have
  \begin{align*}
    |\delta^{(\alpha)}_{ G_t}(x;z)|\leq 2^{d+\alpha}C_2\, \eta_{\alpha,\alpha-\beta_2-2}(t,x)\,|z|^2.
  \end{align*}
  Thus  for $I_1$, if $\alpha\in(0,1)$, then
  $$
   I_1\leq 2^{d+\alpha}C_1\, \eta_{\alpha,\alpha-\beta_1-1}(t,x)\int_{|z|\leq t^{1/2}}|z|^{1-d-\alpha}\dif z\preceq C_1 t^{-\beta_1/2}\eta_{\alpha,0}(t,x);
  $$
  if $\alpha\in[1,2)$, then by interpolation, we have for all $\gamma\in[0,2-\alpha)$,
  \begin{align*}
    I_1&\leq 2^{d+\alpha}\int_{|z|\leq t^{1/2}}\Big(C_1\, \eta_{\alpha,\alpha-\beta_1-1}(t,x)\,|z|\Big)^\gamma
    \Big(C_2\, \eta_{\alpha,\alpha-\beta_2-2}(t,x)\,|z|^2\Big)^{1-\gamma}|z|^{-d-\alpha}\dif z\\
    &\preceq C^\gamma_1 C^{1-\gamma}_2 t^{-\gamma\beta_1/2-(1-\gamma)\beta_2/2}\eta_{\alpha,0}(t,x).
  \end{align*}
  For $I_2$, by \eqref{eq3p} and \eqref{ineq}, we have
  \begin{align*}
  I_2&\leq C_0t^{\alpha-\beta_0/2}\int_{|z|>t^{1/2}}\eta_{\alpha,0}(t,x+z)|z|^{-d-\alpha}\dif z\\
  &\leq 2^{d+\alpha}C_0t^{\alpha-\beta_0/2}\int_{|z|>t^{1/2}}\eta_{\alpha,0}(t,x+z)\eta_\alpha(t,-z)\dif z\\
  &\leq 4^{d+\alpha}C_0t^{\alpha-\beta_0/2}\eta_{\alpha,0}(2t,x)\int_{\mR^d}[\eta_{\alpha,0}(t,x+z)+\eta_\alpha(t,-z)]\dif z\\
  &\preceq C_0t^{-\beta_0/2}\eta_{\alpha,0}(t,x).
  \end{align*}
  For $I_3$ and $I_4$, it is easy to see that
  $$
    I_3+I_4\preceq \Big(C_0t^{-\beta_0/2}+C_1t^{-\beta_1/2}\Big)\eta_{\alpha,0}(t,x).
  $$
  Combing the above calculations, we get (\ref{eqfrG0}).
\end{proof}

Under {\bf (H$^a$)}, it is more or less well known that there exists a fundamental solution to the operator $\p_t-\sL^{a}_t$ (cf. \cite{Friedman.1964.346}). However, to the best of our knowledge, most of the proofs also require the H\"older continuity of $a$ with respect to the time variable $t$. For the readers' convenience, a proof of the following result is provided in Appendix 6.2.
\bt\label{T23}
  Under {\bf (H$^a$)},
    there is a unique continuous function $Z$ on $\mD^\infty_0$ such that for a.e. $t\in (0, s)$,
  and every $x, y\in \mR^d$,
  \begin{equation}\label{e:2.15}
\partial_t Z(t, x; s, y) + \sL^a_t Z(t, \cdot ; s, y)(x)=0,
  \end{equation}
  and
  \begin{enumerate}[\rm (1)]
    \item (Upper and gradient estimate) For $j=0, 1,2$ and $T>0$, there exist constants $C,\lambda>0$ such that on $\mD^T_0$,
        \begin{align}
          |\nabla^j_x Z(t,x;s,y)|\leq C\xi_{\lambda,-j}(s-t,y-x).\label{eq21}
        \end{align}
          \noindent
    \!\!\!\!\!\!\!\!\!\! Moreover, $Z(t, x; s, y)$ enjoys the following properties.
    \medskip

    \item (H\"older estimate in $y$) For $j=0,1$, $\beta'\in(0,\beta)$ and $T>0$, there exist constants $C,\lambda>0$ such that on $\mD^T_0$,
        \begin{align}\label{eq202}
          |\nabla^j_x Z(t,x;s,y_1)-\nabla^j_x Z(t,x;s,y_2)|\leq C|y_1-y_2|^{\beta'}\sum_{i=1,2}\xi_{\lambda,-\beta'-j}(s-t,y_i-x).
        \end{align}
    \noindent

   \medskip

    \item (Continuity) For any bounded and uniformly continuous function $f(x)$,
        \begin{align}
          \lim_{|t-s|\to 0}\|P^{(Z)}_{t,s}f-f\|_\infty=0,\label{cz2}
        \end{align}
        where $P^{(Z)}_{t,s}f(x):=\int_{\mR^d}Z(t,x;s,y)f(y)\dif y$.
    \item (C-K equation) For all $0\leq t<r<s<\infty$, we have
        \begin{align}
          P^{(Z)}_{t,r}P^{(Z)}_{r,s}f=P^{(Z)}_{t,s}f.\label{CK1}
        \end{align}
    \item (Conservativeness) For all $0\leq t<s<\infty$, we have
        \begin{align}
          P^{(Z)}_{t,s}1=1.\label{z11}
        \end{align}
    \item (Generator) For any $f\in C^2_b(\mR^d)$, we have
        \begin{align}
          P^{(Z)}_{t,s}f(x)-f(x)=\int^s_tP^{(Z)}_{t,r}\sL^{a}_rf(x)\dif r=\int^s_t\sL^{a}_rP^{(Z)}_{r,s}f(x)\dif r.\label{ET01}
        \end{align}
     \item (Two-sided estimates) For any $T>0$, there exist constants $C,\lambda\geq 1$ such that on $\mD^T_0$,
        \begin{align}
          C^{-1}\xi_{\lambda,0}(s-t,y-x)\leq Z(t,x;s,y)\leq C\xi_{\lambda^{-1},0}(s-t,y-x).\label{ET41}
        \end{align}
\end{enumerate}
\et

We call $Z(t, x; s, y)$ the fundamental solution or heat kernel of $\sL^a$.
The following corollary gives  fractional derivative estimates of Gaussian kernels.

\bc
  Let $\alpha\in(0,2)$ and $J:\mR^d\to\mR$ be a bounded measurable function satisfying \eqref{Sym1}. Let $Z(t,x;s,y)$ be as in Theorem \ref{T23},  $\beta\in(0,1)$ be as in \eqref{eqa2}, and $T>0$.
  \begin{enumerate}[\rm (i)]
    \item There is a constant $C>0$ such that on $\mD^T_0$,
        \begin{align}
          \big|\widetilde\sL^{J}Z(t,\cdot;s,y)(x)\big|\leq C\|J\|_\infty\eta_{\alpha,0}(s-t,y-x).\label{ET31}
        \end{align}
    \item For any $\gamma\in[0,(2-\alpha)\wedge1)$, there is a constant $C>0$ such that on $\mD^T_0$,
        \begin{align}
          \begin{split}\label{ET32}
            &\big|\widetilde\sL^{J}Z(t,\cdot;s,y)(x_1)-\widetilde\sL^{J}Z(t,\cdot;s,y)(x_2)\big|
            \leq C \|J\|_\infty|x_1-x_2|^\gamma\sum_{i=1,2}\eta_{\alpha,-\gamma}(s-t,y-x_i).
          \end{split}
        \end{align}
    \item For any $\gamma\in[0,(2-\alpha)\wedge1)$ and $\beta'\in(0,\beta)$, there is a constant $C>0$ such that on $\mD^T_0$,
        \begin{align}
          \begin{split}\label{ET33}
            &\big| \widetilde\sL^{J}Z(t,\cdot;s,y_1)(x) - \widetilde\sL^{J}Z(t,\cdot;s,y_2)(x)\big|
            \leq C\|J\|_\infty |y_1-y_2|^{\beta'\gamma}\sum_{i=1,2}\eta_{\alpha,-\beta'\gamma}(s-t,y_i-x).
          \end{split}
        \end{align}
    \item For any $\gamma\in(0,1]$, there is a constant $C>0$ such that on $\mD^T_0$,
        \begin{align}\label{225}
         \begin{split}
          &\big|\nabla_x Z(t,x_1;s,y)-\nabla_x Z(t,x_2;s,y)\big|\leq C |x_1-x_2|^\gamma\sum_{i=1,2}\xi_{\lambda/2,-\gamma-1}(s-t,y-x_i).
          \end{split}
        \end{align}
\end{enumerate}
\ec
\begin{proof}
  (i) By  \eqref{eq21} and \eqref{eqcom}, estimate \eqref{ET31} follows by applying Lemma \ref{Le22} to function
  $$
    (r,x)\mapsto Z(t,x+y; t+r,y)
  $$
  with $\beta_j=0,~ j=0,1,2$, and letting $r=s-t$.
    \medskip\\
  (ii) For fixed $t<s$ and $x_1,x_2,y\in\mR^d$, let us define
  $$
    G_r(z):=Z(t,z+y;t+r,y)-Z(t,x_2-x_1+z+y;t+r,y).
  $$
  Clearly,
  $$
    I:=\widetilde\sL^{J}Z(t,\cdot;s,y)(x_1)-\widetilde\sL^{J}Z(t,\cdot;s,y)(x_2)=\widetilde\sL^{J}G_{s-t}(x_1-y).
  $$
  If $|x_1-x_2|>\sqrt{s-t}$, then by (i), we have
  \begin{align}
    |I|&\leq |\widetilde\sL^{J}Z(t,\cdot;s,y)(x_1)|+|\widetilde\sL^{J}Z(t,\cdot;s,y)(x_2)|\no\\
    &\preceq C\|J\|_\infty\Big(\eta_{\alpha,0}(s-t,y-x_1)+\eta_{\alpha,0}(s-t,y-x_2)\Big)\no\\
    &\leq C\|J\|_\infty|x_1-x_2|^\gamma\Big(\eta_{\alpha,-\gamma}(s-t,y-x_1) +\eta_{\alpha,-\gamma}(s-t,y-x_2)\Big).\label{JH1}
  \end{align}
  If $|x_1-x_2|\leq\sqrt{s-t}$, then by \eqref{eq21}, we have for $j=0,1$,
  \begin{align*}
    |\nabla^jG_{s-t}(z)|&\leq|x_1-x_2|\int^1_0|\nabla^{j+1}_x Z(t,z+y+\theta(x_2-x_1);s,y)|\dif\theta\\
    &\preceq|x_1-x_2|\int^1_0\xi_{\lambda,-j-1}(s-t,-z-\theta(x_2-x_1))\dif\theta\\
    &\preceq |x_1-x_2|\cdot\xi_{\lambda/2,-j-1}(s-t,z),
  \end{align*}
  and
  \begin{align*}
  \begin{split}
    |\nabla^2G_{s-t}(z)|&\leq|\nabla^2_xZ(t,z+y;s,y)|+|\nabla^2_xZ(t,x_2-x_1+z+y;s,y)|\preceq \xi_{\lambda,-2}(s-t,z).
   \end{split}
  \end{align*}
  Hence, by \eqref{eqcom} and Lemma \ref{Le22} with $\beta_0=\beta_1=1$ and $\beta_2=0$, we obtain that for $|x_1-x_2|\leq\sqrt{s-t}$,
  \begin{align*}
    |I| &\leq C\|J\|_\infty\Big(|x_1-x_2|(s-t)^{-{1 /2}}+ |x_1-x_2|^\gamma (s-t)^{-\frac{\gamma}{2}}\Big)\eta_{\alpha,0}(s-t,x_1-y)\\
    &\leq C\|J\|_\infty|x_1-x_2|^\gamma\eta_{\alpha,-\gamma}(s-t,x_1-y).
  \end{align*}
  Combining this with \eqref{JH1}, we obtain \eqref{ET32}.
    \medskip\\
  (iii) As above, for fixed $t<s$ and $x,y_1, y_2\in\mR^d$, let us define
  $$
    G_r(z):=Z(t,z+y_1;t+r,y_1)-Z(t,z+y_1;t+r,y_2).
  $$
  Clearly,
  $$
    I:=\widetilde\sL^{J}Z(t,\cdot;s,y_1)(x)-\widetilde\sL^{J}Z(t,\cdot;s,y_2)(x)=\widetilde\sL^{J}G_{s-t}(x-y_1).
  $$
  If $|y_1-y_2|>\sqrt{s-t}$, then by (i), we have
  \begin{align}
    |I|&\leq |\widetilde\sL^{J}Z(t,\cdot;s,y_1)(x)|+|\widetilde\sL^{J}Z(t,\cdot;s,y_2)(x)|\no\\
    &\preceq \|J\|_\infty\Big(\eta_{\alpha,0}(s-t,y_1-x)+\eta_{\alpha,0}(s-t,y_2-x)\Big)\no\\
    &\preceq \|J\|_\infty|y_1-y_2|^{\beta'\gamma}\Big(\eta_{\alpha,-\beta'\gamma}(s-t,y_1-x) +\eta_{\alpha,-\beta'\gamma}(s-t,y_2-x)\Big).\label{JH2}
  \end{align}
  If $|y_1-y_2|\leq\sqrt{s-t}$, then by \eqref{eq202}, we have for $j=0,1$,
  \begin{align*}
    |\nabla^jG_{s-t}(z)|\preceq&\, |y_1-y_2|^{\beta'}\Big(\xi_{\lambda,-\beta'-j}(s-t,-z) +\xi_{\lambda,-\beta'-j}(s-t,y_2-y_1-z)\Big)\\
    \preceq&\, |y_1-y_2|^{\beta'}\xi_{\lambda/2,-\beta'-j}(s-t,z),
  \end{align*}
  and by \eqref{eq21},
  \begin{align*}
    |\nabla^2G_{s-t}(z)|\leq|\nabla^2_xZ(t,z+y_1;s,y_1)|+|\nabla^2_xZ(t,z+y_1;s,y_2)| \preceq \xi_{\lambda,-2}(s-t,z).
  \end{align*}
  Hence, by \eqref{eqcom} and Lemma \ref{Le22} with $\beta_0=\beta_1=\beta'$ and $\beta_2=0$, we obtain that for $|y_1-y_2|\leq\sqrt{s-t}$,
  \begin{align*}
    |I|&\leq C\|J\|_\infty\Big(|y_1-y_2|^{\beta'}(s-t)^{-\frac{\beta'}{2}} + |y_1-y_2|^{\beta'\gamma} (s-t)^{-\frac{\beta'\gamma}{2}}\Big)\eta_{\alpha,0}(s-t,x-y_1)\\
    &\leq C\|J\|_\infty|y_1-y_2|^{\beta'\gamma}\eta_{\alpha,-\beta'\gamma}(s-t,x-y_1).
  \end{align*}
  Combining this with \eqref{JH2}, we obtain \eqref{ET33}.
    \medskip\\
  (iv) It follows from \eqref{eq21} and the same argument as above.
\end{proof}

\subsection{Kato's class}
We introduce the following Kato's class of space-time functions.
Recall that functions $\xi_{\lambda, \gamma}(t, x)$ and $\eta_{\alpha, \gamma}(t, x)$ are defined in \eqref{ETA}.
For a function $g(t, x)$, we will use $g(t\pm s,x\pm y) $ as an abbreviation for
  $\sum_{j, k=0}^1  g (t+(-1)^j s,x + (-1)^ky)$.

\bd\label{Def2}
  (Generalized Kato's class) Let $\eta_{\alpha,\gamma}$ be given by \eqref{ETA}. For $\alpha\in[1,+\infty)$, define
  \begin{align}
    &\mK_{\alpha}:=\Big\{f: \mathbb{R}\times \mathbb{R}^d\to\mR \text{ satisfies } \lim_{\delta\to 0} K^{f}_{\alpha}(\delta)=0\Big\},\label{Def3}\\
    &\bar\mK_{\alpha}:=\Big\{f: \mathbb{R}\times \mathbb{R}^d \to\mR \text{ satisfies } \bar K^{f}_{\alpha}(1)<\infty\Big\},\label{Def4}
  \end{align}
  where
  \begin{align}
    K^{f}_{\alpha}(\delta)&:=\sup_{(t,x)}\int^{\delta}_0\!\!\!\int_{\mR^d} |f|(t\pm s,x\pm y) \eta_{\alpha,\alpha-1}(s,y)\dif y\dif s , \label{Def5} \\
    \bar K^{f}_{\alpha}(\delta)& :=\sup_{(t,x)}\int^{\delta}_0\!\!\!\int_{\mR^d} |f|(t\pm s,x\pm y) \eta_{\alpha,0}(s,y)\dif y\dif s \label{Def6} .
  \end{align}
  For $\alpha=+\infty$, we define
  $$
    \mK_\infty:=\Big\{f: \mathbb{R}\times \mathbb{R}^d\to\mR \text{ satisfies } \lim_{\delta\rightarrow 0}N^{f}_{\lambda}(\delta)=0 \ \hbox{ for every } \lambda>0\Big\},
  $$
  where
  \begin{align*}
    N^f_\lambda(\delta):=\sup_{(t,x)}\int^{\delta}_0\!\!\!\int_{\mR^d}|f|(t\pm s,x\pm y) \xi_{\lambda,-1}(s,y)\dif y\dif s   .
  \end{align*}
\ed

\br
  $\mK_\infty$ is the same as the Kato class defined in \cite{Zhang.1997.MM381}. For any $\lambda>0$ and $\alpha\in[1,\infty)$, by  (\ref{eqcom}), it is easy to see that there exists a constant $C=C(d,\alpha,\lambda)>0$ such that for all $\delta\in(0,1)$,
  \begin{align}
    N^f_\lambda(\delta)\leq C K^f_\alpha(\delta).\label{e219}
  \end{align}
  Moreover, for any time-independent function $f(t,x)=f(x)$, we have $f\in\mK_\alpha$ if and only if
  $$
    M^\alpha_f(\delta):=\sup_{x\in\mR^d}\int_{\mR^d}|f(x+y)|\cdot \frac{1}{|y|^{d-1}}\left(1\wedge \frac{\delta}{|y|^2}\right)^{(1+\alpha)/{2}}\dif y\to 0\quad\mbox{  as $\delta\to 0$}.
  $$
  Indeed, it follows by noticing that
  $$
    \int^{\delta}_0\eta_{\alpha,\alpha-1}(s,y)\dif s=\int^{\delta}_0\frac{s^{(\alpha-1)/{2}} \dif s}{(|y|+s^{1/2})^{d+\alpha}}\asymp \frac{(|y|^2\wedge\delta)^{(1+\alpha)/{2}}}{|y|^{d+\alpha}}=\frac{1}{|y|^{d-1}}\left(1\wedge \frac{\delta}{|y|^2}\right)^{(1+\alpha)/{2}}.
  $$
\er

We have the following results about the above Kato classes.
\bp\label{in}
  Let $\alpha\in[1,\infty)$, $p,q\in[1,\infty]$ and $f:\mathbb{R}\times\mathbb{R}^d\to \mathbb{R}$.
  \begin{enumerate}[\rm (i)]
  \item There is a constant $C=C(d,\alpha)>0$ such that for all $\delta>0$ and $j\in \mN$,
  \begin{equation}\label{DR2}
    K^f_\alpha(j\delta) \leq CjK^f_\alpha(\delta),\ \ \bar K^f_\alpha(j\delta) \leq Cj\bar K^f_\alpha(\delta)
  \end{equation}
  and
  \begin{equation}\label{eq:KatoLocalNorm}
    \sup_{(t,x)}\int_{0}^{\delta}\!\!\!\int_{\mathbb{R}^d} |f|(t\pm s,x\pm y) \dif y \dif s
    \leq C \left((\delta^{-(d+1)/2}K^f_\alpha(\delta))\wedge(\delta^{-(d+\alpha)/2}\bar K^f_\alpha(\delta))\right).
  \end{equation}
  \item If $\frac{d}{p}+\frac{2}{q}<1$, then
  $$
    L^q(\mR;L^p(\mR^d))\subset\mK_1\subset\mK_{\alpha}\subset\mK_\infty;
  $$
  and if $\alpha \in [1,2)$ and $\frac{d}{p}+\frac{2}{q}<2-\alpha$, then
  $$
    L^q(\mR;L^p(\mR^d))\subset\bar\mK_{\alpha}\subset\mK_\alpha.
    $$
    \end{enumerate}
\ep
\begin{proof}
 (i)  By definition \eqref{Def5}, we have
  \begin{align}\label{DR1}
  \begin{split}
    K^f_\alpha(j\delta)&= \sup_{(t,x)} \sum_{k=0}^{j-1} \int_{k\delta}^{(k+1)\delta}\!\! \int_{\mathbb{R}^d}
    |f|(t\pm s,x\pm y) \eta_{\alpha,\alpha-1}(s,y)\dif y\dif s\\
    &\leq K^f_\alpha(\delta)+ \sum_{k=1}^{j-1} \sup_{(t,x)}\int_0^{\delta}\!\!\! \int_{\mathbb{R}^d}
    |f|\big((t\pm k\delta)\pm s,x\pm y\big) \eta_{\alpha,\alpha-1}(s+k\delta,y)\dif y\dif s.
    \end{split}
    \end{align}
    Denoting the term in the above sum by $I_k$, by \eqref{eq:etaSmpgLower} and \eqref{ineq}, we have for each $k=1,\cdots,j-1$,
    \begin{align*}
    I_k&\preceq \sup_{(t,x)}\int_0^{\delta} \!\!\!\int_{\mathbb{R}^d} |f|\big((t\pm k\delta)\pm s,x\pm y\big)
    \int_{\mathbb{R}^d} \eta_{\alpha,\alpha-1}(s,y-z)\eta_{\alpha,\alpha}(k\delta,z) \dif z\dif y\dif s\\
    &=\sup_{(t,x)}\int_{\mathbb{R}^d} \left(\int_0^{\delta}\!\!\! \int_{\mathbb{R}^d}
    |f|\big((t\pm k\delta)\pm s,(x\pm z)\pm y\big) \eta_{\alpha,\alpha-1}(s,y) \dif y\dif s\right) \eta_{\alpha,\alpha}(k\delta,z) \dif z\\
    &\leq K^f_\alpha(\delta)  \int_{\mathbb{R}^d} \eta_{\alpha,\alpha}(k\delta,z) \dif z
    = K^f_\alpha(\delta)\left(\int_{\mathbb{R}^d} \eta_{\alpha,\alpha}(1,z) \dif z\right).
  \end{align*}
  Substituting this into \eqref{DR1}, we get the first inequality in \eqref{DR2}. Similarly, we can prove the second inequality in \eqref{DR2}.
On the other hand, for any $(t,x) \in \mathbb{R}\times \mathbb{R}^d$,
  \begin{align*}
   \int_0^{\delta}\!\!\!\int_{\mathbb{R}^d} |f|(t\pm s,x\pm y) \dif y \dif s
   & =\int_{\delta}^{2\delta}\!\!\!\int_{\mathbb{R}^d} |f|((t\pm\delta)\pm s,x\pm y) \frac{\eta_{\alpha,\alpha-1}(s,y)}{\eta_{\alpha,\alpha-1}(s,y)} \dif y \dif s\\
    &\leq \frac{(2\delta)^{(\alpha-1)/2}}{\delta^{(d+\alpha)/2}} \sup_{(t,x)} \int_{\delta}^{2\delta}\!\!\!\int_{\mathbb{R}^d} |f|((t\pm \delta)\pm s,x\pm y) \eta_{\alpha,\alpha-1}(s,y) \dif y \dif s\\
    &\leq 2^{(\alpha-1)/2}\delta^{-(d+1)/2}K^f_\alpha(2\delta)\preceq \delta^{-(d+1)/2}K^f_\alpha(\delta).
  \end{align*}
  Similar, we have
  $$
  \int_0^{\delta}\!\!\!\int_{\mathbb{R}^d} |f|(t\pm s,x\pm y) \dif y \dif s\preceq\delta^{-(d+\alpha)/2}\bar K^f_\alpha(\delta).
  $$
 (ii) The inclusions of $\mK_1\subset\mK_\alpha\subset\mK_\infty$ and $\bar\mK_\alpha\subset\mK_\alpha$ follow by definitions and \eqref{e219}. Let us prove
 $L^q(\mR;L^p(\mR^d))\subset\cap_{\alpha\geq 1}\mK_\alpha$.
 Let $f\in L^q(\mR;L^p(\mR^d))$. By H\"older's inequality, we have
  $$
    K^f_\alpha(\delta)\leq\Bigg(\int_{\mR}\left(\int_{\mR^d}|f(s,y)|^p\dif y\right)^{\frac{q}{p}}\dif s\Bigg)^{\frac{1}{q}}
    I_\alpha(\delta),
  $$
  where
  $$
    I_\alpha(\delta):=\Bigg(\int^\delta_0\!\Bigg(\!\int_{\mR^d}\frac{s^{(\alpha-1)p^*/2}\dif y}
    {\big(|y|+s^{1/2}\big)^{(d+\alpha)p^{*}}}\Bigg)^{\frac{q^*}{p^*}}\dif s\Bigg)^{\frac{1}{q^{*}}},
  $$
  with $q^*:=\frac{q}{q-1}$ and $p^*:=\frac{p}{p-1}$. Noticing that
  \begin{align*}
     \int_{\mR^d}\frac{s^{(\alpha-1)p^*/2}\dif y}{\big(|y|+s^{1/2}\big)^{(d+\alpha)p^{*}}}
     &\leq s^{(\alpha-1)p^*/2}\left(\int_{|y|\leq s^{1/2}}s^{-\frac{(d+\alpha)p^{*}}{2}}\dif y
     + \int_{|y|> s^{1/2}}\frac{\dif y}{|y|^{(d+\alpha)p^{*}}}\right)\preceq s^{\frac{d-(d+1)p^{*}}{2}},
  \end{align*}
  we have
  $$
    I_\alpha(\delta)
    \preceq\Bigg(\int^\delta_0s^{\frac{dq^*}{2 p^*}-\frac{(d+1)q^*}{2}}\dif s\Bigg)^{\frac{1}{q^{*}}}.
  $$
  Thus  $I_\alpha(\delta)$ converges to zero as $\delta\to 0$ provided that
  $$
    \frac{dq^*}{2 p^*}-\frac{d+1}{2}q^*+1>0\Leftrightarrow\frac{d}{p}+\frac{2}{q}<1.
  $$
  Similarly, we can show that $L^q(\mR;L^p(\mR^d))\subset\bar\mK_{\alpha}$ provided $\frac{d}{p}+\frac{2}{q}<2-\alpha$.
\end{proof}

Next we study the mollifying approximation of $f\in\mK_\alpha$. Let $\rho(t,x): \mR^{d+1}\to [0,1]$ be a smooth function with support in the unit ball and $\int\rho=1$. For $\eps\in(0,1)$, define a family of mollifiers $\rho_\eps$ as follows:
$$
  \rho_\eps(t,x):=\eps^{-d-1}\rho(\eps^{-1} t, \eps^{-1} x).
$$
For $f\in \mK_\alpha$, we define
\begin{align}
  f_\eps(t,x):=f*\rho_\eps(t,x)=\int_{\mR^{d+1}}f(s,y)\rho_\eps(t-s,x-y)\dif y\dif s.   \label{n}
\end{align}
By Fubini's theorem, it is easy to see that
\begin{align}
  K^{f_\eps}_\alpha(\delta)\leq K^{f}_\alpha(\delta).\label{ER1}
\end{align}

\bl\label{Le28}
  For $\alpha \in [1,\infty)$ and $f\in\mK_\alpha$, we have
  \begin{align*}
    \lim_{\eps\to 0}\int^1_0\!\!\!\int_{\mR^d} |f_\eps-f|(t\pm s,x\pm y)\eta_{\alpha,\alpha-1}(s,y)\dif y \dif s=0.
  \end{align*}
\el
\begin{proof}
  First of all, notice that
  \begin{align*}
    \lim_{\delta\to 0}\sup_{\eps\in(0,1)}\int^\delta_0\!\!\!\int_{\mR^d} |f_\eps-f|(t\pm s,x\pm y) \eta_{\alpha,\alpha-1}(s,y)\dif y \dif s \stackrel{\eqref{ER1}}{\leq} 2\lim_{\delta\to 0}K^{f}_\alpha(\delta)=0.
  \end{align*}
  So, it suffices to prove that for fixed $\delta\in(0,1)$,
  \begin{align}
    \lim_{\eps\to 0}\int^1_\delta\!\!\!\int_{\mR^d} \frac{|f_\eps-f|(t\pm s,x\pm y) s^{(\alpha-1)/2}}{(|y|+s^{1/2})^{d+\alpha}} \dif y\dif s=0.\label{RT6}
  \end{align}
  Let $f^n(t,x):=(-n)\vee f(t,x)\wedge n$ and $f^n_\eps:=f^n*\rho_\eps$. Since $\rho_\eps$ has support in $\{(t,x): |(t,x)|\leq\eps\}$, by the definition of convolution, we have
  \begin{align}
\sup_{\eps\in(0,\delta/4)}\int^1_{\delta}\!\!\!&\int_{\mR^d} \frac{|f^n_\eps-f_\eps|(t\pm s,x\pm y) s^{(\alpha-1)/2}}{(|y|+s^{1/2})^{d+\alpha}}\dif y\dif s\notag\\
    \leq&\, \int^{1+\delta/4}_{3\delta/4}\!\!\!\int_{\mR^d} \frac{|f^n-f|(t\pm s,x\pm y) (s+\delta/4)^{(\alpha-1)/2}}{(|y|-\delta/4+(s-\delta/4)^{1/2})^{d+\alpha}}\dif y\dif s\notag\\
    \leq&\, \int^{1+\delta/4}_{3\delta/4}\!\!\!\int_{\mR^d} \frac{|f^n-f|(t\pm s,x\pm y) (2s)^{(\alpha-1)/2}}{(|y|+s^{1/2}/4)^{d+\alpha}}\dif y\dif s,\label{RT7}
  \end{align}
  which converges to zero as $n\to\infty$ by the dominated convergence theorem. On the other hand, for fixed $n\in\mN$, since
  $\lim_{\eps\to 0} f^n_\eps = f^n$ a.e.,
   by the bounded convergence theorem, we have
  $$
    \lim_{\eps\to 0}\int^1_{\delta}\!\!\!\int_{\mR^d} \frac{|f^n_\eps-f^n|(t\pm s,x\pm y) s^{(\alpha-1)/2}}{(|y|+s^{1/2})^{d+\alpha}}\dif y\dif s=0.
  $$
  Combining this with \eqref{RT7}, we obtain \eqref{RT6}.
\end{proof}

\section{Proof of Theorem \ref{main1}}

In the remaining part of this paper, we shall fix $\alpha\in(0,2)$ and assume {\bf (H$^a$)}, {\bf (H$^\kappa$)} and $b\in\mK_2$.
Below, a function $f(t,x)$ on $[0,\infty)\times\mR^d$ will be automatically extended to $\mR\times\mR^d$ by letting $f(t,\cdot)=0$ for $t<0$.
Notice that
\begin{align}
  \sL^{b,\kappa}_t:=b_t\cdot \nabla+\sL^\kappa_t= {\tilde b}_t\cdot \nabla +{\widetilde\sL}^{\kappa(t,\cdot)}, \label{LLL}
\end{align}
where ${\widetilde\sL}^{\kappa(t,\cdot)}$ is defined by \eqref{j}, and
\begin{equation}\label{eq:tildebDef}
  \tilde b(t,x):=b(t,x)+1_{\alpha\in(1,2)}\int_{|z|>1}z\kappa(t,x,z)|z|^{-d-\alpha}\dif z
  -1_{\alpha\in(0,1)} \int_{|z|\leq 1}z\kappa(t,x,z)|z|^{-d-\alpha}\dif z.
\end{equation}
By definition, it is easy to see that for some $c=c(d,\alpha)>0$,
  \begin{equation}\label{kbkb}
    K^{\tilde{b}}_2(r) \leq K^b_2(r) + c\|\kappa\|_\infty r^{{1 /2}},\quad r> 0.
  \end{equation}
Let $Z(t,x;s,y)$ be the heat kernel of $\sL^a_t$ constructed in Theorem \ref{T23}.
We will construct the fundamental solution $p(t, x; s, y)$ of $\sL_t$ by using
Duhamel's formula \eqref{eqdu}.
To solve that integral equation, let $p_0(t,x;s,y):=Z(t,x;s,y)$, and for $n\in\mN$, define
\begin{align}
  p_n(t,x;s,y):=\int^s_t\!\!\!\int_{\mR^d}p_{n-1}(t,x;r,z)\sL^{b,\kappa}_r Z(r,\cdot;s,y)(z)\dif z\dif r.\label{ET1}
\end{align}

We first prepare the following lemma for later use.
\bl\label{lem:3p}
  For  any $\lambda>0$ and $j=0,1$, there exists a constant $C_j=C_j(d,\alpha,\lambda)>0$ such that for all $(t,x;s,y)\in\mD^\infty_0$,
  \begin{align}
  \begin{split}\label{UY1}
    &\int^s_t\!\!\!\int_{\mR^d}\eta_{\alpha,2-j}(r-t,z-x)|b(r,z)|\xi_{\lambda,-1}(s-r,y-z)\dif z\dif r\\
     &\qquad\qquad\leq C_jK^b_2(s-t)\eta_{\alpha,2-j}(s-t,y-x),
    \end{split}
    \end{align}
    and
    \begin{align}
  \begin{split}\label{UY2}
    &\int^s_t\!\!\!\int_{\mR^d}\xi_{\lambda,-j}(r-t,z-x)|b(r,z)| \xi_{2\lambda,-1}(s-r,y-z)\dif z\dif r\\
    &\qquad\qquad\leq C_jK^b_2(s-t)\xi_{\lambda,-j}(s-t,y-x),
    \end{split}
  \end{align}
  where $K^b_2(s-t)$ is defined by \eqref{Def5}.
\el
\begin{proof}
  Notice that by \eqref{eqcom},
  \begin{align*}
    \xi_{\lambda,-j}(s-t,y-x)\preceq \eta_{2,2-j}(s-t,y-x),\quad j=0,1.
  \end{align*}
  Hence, by \eqref{eq3p}, we have
  \begin{align*}
    &\int^s_t\!\!\!\int_{\mR^d}\eta_{\alpha,2-j}(r-t,z-x) ~|b(r,z)|~ \xi_{\lambda,-1}(s-r,y-z)\dif z\dif r\\
    &\preceq\eta_{\alpha,0}(s-t,y-x)\int^s_t\!\!\!\int_{\mR^d}(r-t)^{(2-j)/2}(s-r)^{1/2} |b(r,z)|\\
    &\qquad\times\Big(\eta_{2,0}(r-t,z-x)+\eta_{2,0}(s-r,y-z)\Big)\dif z\dif r\\
    &\preceq\eta_{\alpha,2-j}(s-t,y-x)\int^s_t\!\!\!\int_{\mR^d}|b(r,z)|\\
    &\qquad\times\Big(\eta_{2,1}(r-t,z-x)+\eta_{2,1}(s-r,y-z)\Big)\dif z\dif r\\
    &\leq C_jK^b_2(s-t)\eta_{\alpha,2-j}(s-t,y-x),
  \end{align*}
  where the last step is due to the change of variables and the definition of $K^b_2$. Thus  \eqref{UY1} is proved. Estimate \eqref{UY2} follows from \cite[Lemma 3.1]{Zhang.1997.MM381} and \eqref{e219}.
\end{proof}

\bl\label{Le32}
  For each $n\in\mN$ and $j=0,1$, $\nabla^j_xp_n(t,x;s,y)$ is a jointly continuous function on $\mD^\infty_0$, and for any $T>0$, there exist constants $c,\lambda>0$ such that for all $n\in\mN$ and $(t,x;s,y)\in\mD^T_0$,
  \begin{align}
  \begin{split}
    |\nabla^j_x p_n(t,x;s,y)|&\leq c (c\ell_{b,\kappa}(s-t))^{n-1} \|\kappa\|_\infty\eta_{\alpha,2-j}(s-t,y-x)\\
    &\quad+(c\ell_{b,\kappa}(s-t))^{n}\xi_{\lambda,-j}(s-t,y-x),
    \end{split}\label{ep2}
  \end{align}
  where $\ell_{b,\kappa}(r):=\|\kappa\|_\infty \left(r^{1-\frac{\alpha}{2}} + r^{{1 /2}}\right)+K^b_2(r)$.
\el
\begin{proof}
  (1) First of all, by definition, {\bf (H$^\kappa$)}, (\ref{ET31}) and \eqref{eq21}, there is a $\lambda>0$ such that
  \begin{align}
    |\widetilde\sL^{\kappa(t,\cdot)} Z(t,\cdot;s,y)(x)|\preceq \|\kappa\|_\infty \eta_{\alpha,0}(s-t,y-x),\quad |\nabla_xZ(t,x;s,y)|\preceq \xi_{2\lambda,-1}(s-t,y-x).\label{ep4}
  \end{align}
 For $r>0$, let
  \begin{equation*}
    \widetilde{\ell}_{b,\kappa}(r) := \|\kappa\|_\infty r^{1-\frac{\alpha}{2}} + K^{\tilde{b}}_2(r),
  \end{equation*}
where $\tilde b$ is defined by (\ref{eq:tildebDef}). In view of (\ref{LLL}) and (\ref{kbkb}), it is enough to prove \eqref{ep2} with $\widetilde{\ell}_{b,\kappa}$ instead of $\ell_{b,\kappa}$. For $n=1$, by \eqref{UY3} and \eqref{UY2} we have
  \begin{align*}
    |\nabla^j_xp_1(t,x;s,y)|&\preceq \!\!\!\int^s_t\!\!\!\int_{\mR^d}\xi_{\lambda,-j}(r-t,z-x) \Big(|\tilde{b}(r,z)|\cdot\xi_{2\lambda,-1}(s-r,y-z)\\
    &\qquad\qquad+\|\kappa\|_\infty\eta_{\alpha,0}(s-r,y-z)\Big)\dif z\dif r\\
    &\leq c_0\widetilde{\ell}_{b,\kappa}(s-t)\cdot\xi_{\lambda,-j}(s-t,y-x)+c_1\|\kappa\|_\infty\eta_{\alpha,2-j}(s-t,y-x).
  \end{align*}
  Suppose that  \eqref{ep2} holds for $\widetilde{\ell}_{b,\kappa}$ and for some $n\in\mN$. By \eqref{ep4} and the induction hypothesis, we have
  \begin{align*}
   &\quad  |\nabla^j_xp_{n+1}(t,x;s,y)|\\
   &\preceq  \int^s_t\!\!\!\int_{\mR^d} \!\! \Big(c\big(c\widetilde{\ell}_{b,\kappa}(r-t)\big)^{n-1}\|\kappa\|_\infty\eta_{\alpha,2-j}(r-t,z-x)
    +(c \widetilde{\ell}_{b,\kappa}(r-t))^n\xi_{\lambda,-j}(r-t,z-x)\Big)\\
    &\qquad \times \left(|\tilde{b}(r,z)| \cdot\xi_{2\lambda,-1}(s-r,y-z) + \|\kappa\|_\infty\eta_{\alpha,0}(s-r,y-z)\right) \dif z\dif r\\
    &\preceq  c(c \widetilde{\ell}_{b,\kappa}(s-t))^{n-1}\|\kappa\|_\infty(I_1+I_2)+(c \widetilde{\ell}_{b,\kappa}(s-t))^{n}(I_3+I_4),
  \end{align*}
  where
  \begin{align*}
    I_1&:=\int^s_t\!\!\!\int_{\mR^d}\eta_{\alpha,2-j}(r-t,z-x)\cdot |\tilde{b}(r,z)|\cdot \xi_{2\lambda,-1}(s-r,y-z)\dif z\dif r,\\ I_2&:=\|\kappa\|_\infty\int^s_t\!\!\!\int_{\mR^d}\eta_{\alpha,2-j}(r-t,z-x)\cdot \eta_{\alpha,0}(r-t,z-x)\dif z\dif r,\\
    I_3&:=\int^s_t\!\!\!\int_{\mR^d}\xi_{\lambda,-j}(r-t,z-x)\cdot |\tilde{b}(r,z)|\cdot \xi_{2\lambda,-1}(s-r,y-z)\dif z\dif r,\\
    I_4&:=\|\kappa\|_\infty\int^s_t\!\!\!\int_{\mR^d}\xi_{\lambda,-j}(r-t,z-x)\cdot \eta_{\alpha,0}(r-t,z-x)\dif z\dif r.
  \end{align*}
  By \eqref{UY1} and \eqref{3P}, one sees that
  \begin{align*}
    I_1+I_2\preceq \widetilde{\ell}_{b,\kappa}(s-t)\cdot\eta_{\alpha,2-j}(s-t,y-x),
  \end{align*}
  and by \eqref{UY2} and \eqref{UY3},
  \begin{align*}
    I_3\preceq \widetilde{\ell}_{b,\kappa}(s-t)\cdot\xi_{\lambda,-j}(s-t,y-x),\ \ I_4&\preceq  \|\kappa\|_\infty\eta_{\alpha,2-j}(s-t,y-x).
  \end{align*}
  Therefore,
  \begin{align*}
     |\nabla^j_xp_{n+1}(t,x;s,y)|
     &\leq  c_2c(c \widetilde{\ell}_{b,\kappa}(s-t))^{n-1}\widetilde{\ell}_{b,\kappa}(s-t) \cdot \|\kappa\|_\infty
       \eta_{\alpha,2-j}(s-t,y-x)\\
    & +(c \widetilde{\ell}_{b,\kappa}(s-t))^{n}\Big(c_3\widetilde{\ell}_{b,\kappa}(s-t)\cdot\xi_{\lambda,-j}(s-t,y-x) + c_4\|\kappa\|_\infty\eta_{\alpha,2-j}(s-t,y-x)\Big)\\
    &\leq  (c_2+c_4)(c \widetilde{\ell}_{b,\kappa}(s-t))^{n}\cdot\|\kappa\|_\infty\eta_{\alpha,2-j}(s-t,y-x)\\
    &\quad +c_3(c \widetilde{\ell}_{b,\kappa}(s-t))^{n}\widetilde{\ell}_{b,\kappa}(s-t)\cdot\xi_{\lambda,-j}(s-t,y-x).
  \end{align*}
  Finally, by choosing $c=c_0\vee c_1\vee (c_2+c_4)\vee c_3$, we obtain the desired result.
  \\
  \vbox{~}
  (2) We use induction to show the joint continuity of $\nabla^j_xp_n$. First of all, by Theorem \ref{T23}, $\nabla^j_xp_0$ is jointly continuous. Suppose now that $\nabla^j_xp_{n-1}$ is jointly continuous for some $n\in\mN$. For fixed $\eps>0$ and $\delta\in(0,\eps/2)$, we write
  \begin{align*}
    \nabla^j_xp_n(t,x;s,y)&=\left(\int^s_{s-\delta}+\int^{t+\delta}_t\right)\! \int_{\mR^d}\nabla^j_xp_{n-1}(t,x;r,z) \sL^{b,\kappa}_r Z(r,\cdot;s,y)(z)\dif z\dif r\\
    &\quad+\int^{s-\delta}_{t+\delta}\!\!\!\int_{\mR^d}\nabla^j_xp_{n-1}(t,x;r,z)\sL^{b,\kappa}_r Z(r,\cdot;s,y)(z)\dif z\dif r\\
    &=:I_1(\delta,t,x;s,y)+I_2(\delta,t,x;s,y).
  \end{align*}
  For $I_1$, as in step (1), there is a $C_\eps>0$ such that for all $(t,x;s,y)\in\mD^T_\eps$ and $\delta\in(0,\eps/2)$,
  \begin{align}\label{RT8}
    |I_1(\delta,t,x;s,y)|\leq C_\eps\ell_{b,\kappa}(\delta).
  \end{align}
  For $I_2$, by the dominated convergence theorem and induction hypothesis, one sees that for fixed $\delta\in(0,\eps/2)$,
  $$
    (t,x;s,y)\mapsto I_2(\delta,t,x;s,y)\mbox{ is continuous on $\mD^T_\eps$.}
  $$
  Combining this with \eqref{RT8}, we obtain the continuity of $\nabla^j_xp_n$ on $\mD^T_\eps$.
  Since $\eps, T>0$ are arbitrary, $\nabla^j_xp_n$ is jointly continuous on $\mD^\infty_0$. The proof is complete.
\end{proof}

\bl\label{Th2}
  Let $J:\mR^d\to\mR$ be a bounded measurable function satisfying \eqref{Sym1}.
  \begin{enumerate}[\rm (i)]
    \item If $\alpha\in(0,1]$ and $b\in\mK_1$, then for any $T>0$, there is a constant $c>0$ such that for all $n\in\mN_0$ and $(t,x;s,y)\in\mD^T_0$,
        \begin{align}
          |\widetilde\sL^Jp_n(t,\cdot;s,y)(x)|\leq c(c\ell_{b,\kappa}(s-t))^{n}\|J\|_\infty\eta_{\alpha,0}(s-t,y-x),\label{UT01}
        \end{align}
        where $\ell_{b,\kappa}(r)=\|\kappa\|_\infty \left(r^{1-\frac{\alpha}{2}}+r^{{1 /2}}\right)+K^b_1(r)$.
    \item If $\alpha\in(1,2)$ and $b\in\bar\mK_{\alpha}$, then \eqref{UT01} still holds with
        $\ell_{b,\kappa}(r):=\|\kappa\|_\infty \left(r^{1-\frac{\alpha}{2}}+r^{{1 /2}}\right) + r^{\frac{\alpha-1}{2}}\bar K^b_{\alpha}(1)$.
  \end{enumerate}
  Moreover, for the above two classes of Kato's functions $b$, we have
  \begin{align}
    p_{n+1}(t,x;s,y)=\int^s_t\!\!\!\int_{\mR^d}Z(t,x;r,z)\sL^{b,\kappa}_r p_{n}(r,\cdot;s,y)(z)\dif z\dif r. \label{equ}
  \end{align}
\el
\begin{proof}
  As before, we set for $r>0$
  \begin{equation*}
    \widetilde{\ell}_{b,\kappa}(r) := \|\kappa\|_\infty r^{1-\frac{\alpha}{2}} + K^{\tilde{b}}_2(r).
  \end{equation*}
 By (\ref{LLL}) and (\ref{kbkb}), we only need to prove \eqref{UT01} with $\widetilde{\ell}_{b,\kappa}$ instead of $\ell_{b,\kappa}$.
 By \eqref{ET31}, one sees that \eqref{UT01} and \eqref{equ} hold for $n=0$. Now suppose that \eqref{UT01} and \eqref{equ} hold for $\widetilde{\ell}_{b,\kappa}$ and for some $n\in\mN_0$. By Fubini's theorem,  \eqref{ET31}, \eqref{ep4} and (\ref{3P}), we have
  \begin{align*}
    |\widetilde\sL^J_tp_{n+1}(t,\cdot;s,y)(x)|&=\left|\int^s_t\!\!\!\int_{\mR^d}\widetilde\sL^J_t p_n(t,\cdot;r,z)(x) \sL^{b,\kappa}_rZ(r,\cdot;s,y)(z)\dif z\dif r\right|\\
    &\preceq \int^s_t\!\!\!\int_{\mR^d}(c\widetilde{\ell}_{b,\kappa}(r-t))^n \|J\|_\infty\eta_{\alpha,0}(r-t,z-x)\\
    &\quad\times \Big(|\tilde{b}(r,z)|\cdot\xi_{2\lambda,-1}(s-r,y-z)+ \|\kappa\|_\infty \eta_{\alpha,0}(s-r,y-z)\Big)\dif z\dif r\\
    &= (c\widetilde{\ell}_{b,\kappa}(s-t))^n\|J\|_\infty(I_1+I_2),
  \end{align*}
  where
  \begin{align*}
    I_1&:=\int^s_t\!\!\!\int_{\mR^d}\eta_{\alpha,0}(r-t,z-x)\cdot|\tilde{b}(r,z)|\cdot\xi_{2\lambda,-1}(s-r,y-z)\dif z\dif r,\\
    I_2&:= \|\kappa\|_\infty \int^s_t\!\!\!\int_{\mR^d}\eta_{\alpha,0}(r-t,z-x)\cdot\eta_{\alpha,0}(s-r,y-z)\dif z\dif r.
  \end{align*}
  For $I_1$, by \eqref{eqcom} and \eqref{eq3p}, we have
  \begin{align*}
    I_1&\preceq (s-t)^{(\alpha\vee 1 -1)/2} \int^s_t\!\!\!\int_{\mR^d}\eta_{\alpha,0}(r-t,z-x)\cdot|\tilde{b}(r,z)|\cdot\eta_{\alpha\vee 1,0}(s-r,y-z)\dif z\dif r\\
    &\preceq \eta_{\alpha,\alpha\vee 1-1}(s-t,y-x)\int^s_t\!\!\!\int_{\mR^d}|\tilde{b}(r,z)|\Big(\eta_{\alpha\vee 1,0}(r-t,z-x)+\eta_{\alpha\vee 1,0}(s-r,y-z)\Big)\dif z\dif r.
  \end{align*}
  If $\alpha\in(0,1]$, then
  $$
    I_1\preceq\eta_{\alpha,0}(s-t,y-x)K^{\tilde{b}}_{1}(s-t) .
  $$
  If $\alpha\in(1,2)$, then
  $$
    I_1\preceq\eta_{\alpha,\alpha-1}(s-t,y-x)\bar K^{\tilde{b}}_{\alpha}(s-t) =  (s-t)^{\frac{\alpha-1}{2}} \bar K^{\tilde{b}}_{\alpha}(s-t) \eta_{\alpha,0}(s-t,y-x).
  $$
  For $I_2$, by \eqref{3P} we have
  $$
    I_2\preceq \|\kappa\|_\infty \eta_{\alpha, 2-\alpha}(s-t,y-x) = \|\kappa\|_\infty (s-t)^{1-\frac{\alpha}{2}} \eta_{\alpha,0}(s-t,y-x).
  $$
  Combining the above calculations, we obtain \eqref{UT01}.

  Moreover, by Fubini's theorem again and the induction hypothesis, we have
  \begin{align*}
    p_{n+2}(t,x;s,y)&=\int^s_t\!\!\!\int_{\mR^d}p_{n+1}(t,x;r,z)\sL^{b,\kappa}_rZ(r,\cdot;s,y)(z)\dif z\dif r\\ &=\int^s_t\!\!\!\int_{\mR^d}\!\int^r_t\!\!\!\int_{\mR^d}Z(t,x;r',z')\sL^{b,\kappa}_{r'}p_n(r',\cdot;r,z)(z')\dif z'\dif r'\\
    &\qquad\qquad\qquad\times\sL^{b,\kappa}_rZ(r,\cdot;s,y)(z)\dif z\dif r\\ &=\int^s_t\!\!\!\int_{\mR^d}Z(t,x;r',z') \int^s_{r'}\!\!\!\int_{\mR^d}\sL^{b,\kappa}_{r'}p_n(r',\cdot;r,z)(z')\\
    &\qquad\qquad\qquad\times\sL^{b,\kappa}_rZ(r,\cdot;s,y)(z)\dif z\dif r\dif z'\dif r'\\
    &=\int^s_t\!\!\!\int_{\mR^d}Z(t,x;r',z')\sL^{b,\kappa}_rp_{n+1}(r',\cdot;s,y)(z')\dif z'\dif r'.
  \end{align*}
  The proof is complete.
\end{proof}

Under additional regularity assumptions on $b$ and $\kappa$, we can show further regularity of $p_n(t,\!x;s,\!y)$ as given in the following lemma.

\bl
  If $b$ and $\kappa$ are bounded measurable and for some $C_0>0$ and $\gamma\in(0,1)$,
  \begin{align}\label{RT9}
    |b(t,y)-b(t,x)|+|\kappa(t,y,z)-\kappa(t,x,z)|\leq C_0|y-x|^\gamma,\quad t\in \mathbb{R}_+,~x,y,z\in \mathbb{R}^d,
  \end{align}
  then $\nabla^2_x p_n$ is continuous on $\mD^\infty_0$, and for any $T>0$, there are two constants $\lambda > 0$ and $C_1>0$ such that for all $n \in \mathbb{N}$ and $(t,x;s,y)\in\mD^T_0$,
  \begin{align}
    |\nabla^2_x p_n(t,x;s,y)|\leq C_1\Big(C_1(s-t)^{\frac{(2-\alpha)\wedge 1}{2}}\Big)^{n-1}(\eta_{\alpha,0}+\xi_{\lambda/2,-1})(s-t,y-x).\label{UT1}
  \end{align}
\el

\begin{proof}
  Below, we always assume that $0<s-t\leq T$ and $x_1,x_2,x,y \in \mathbb{R}^d$.\\
  (1) By \eqref{eq21}, \eqref{225} and \eqref{RT9}, we have
  \begin{align*}
    |{\tilde b}_t\cdot \nabla Z(t,\cdot;s,y)(x_1)-{\tilde b}_t \cdot \nabla Z(t,\cdot;s,y)(x_2)| \preceq |x_1-x_2|^\gamma\Big(\sum_i\xi_{\lambda/2,-\gamma-1}(s-t, y-x_i)\Big),
  \end{align*}
  and by \eqref{ET32}, \eqref{ET31} and \eqref{RT9},
  \begin{align*}
   |\widetilde\sL^{\kappa(t,\cdot)}Z(t,\cdot;s,y)(x_1)-\widetilde\sL^{\kappa(t,\cdot)} Z(t,\cdot;s,y)(x_2)| \preceq |x_1-x_2|^\gamma\Big(\sum_i\eta_{\alpha,-\gamma}(s-t, y-x_i)\Big).
  \end{align*}
  Let $H_{s,y}(t,x):=\sL^{b,\kappa}_t Z(t,\cdot;s,y)(x)$. By the above two estimates, we have
  \begin{align}\label{NJ1}
  |H_{s,y}(t,x_1)-H_{s,y}(t,x_2)|\preceq |x_1-x_2|^\gamma \Big(\sum_i(\eta_{\alpha,-\gamma}+\xi_{\lambda/2,-\gamma-1})(s-t, y-x_i)\Big).
  \end{align}
  Moreover, by \eqref{eq21}    and \eqref{ET31}, we also have
  \begin{align}\label{NJ2}
    |H_{s,y}(t,x)|\preceq (\eta_{\alpha,0}+\xi_{\lambda,-1})(s-t,y-x).
  \end{align}
  (2) We use induction to prove \eqref{UT1}. First of all, for $n=1$, by \eqref{VV} below, we have
  \begin{align*}
    \nabla^2_xp_1(t,x;s,y)&=\int^s_t\!\!\!\int_{\mR^d}\nabla^2_x p_{0}(t,x;r,z)H_{s,y}(r,z)\dif z\dif r=I_1+I_2+I_3,
    \end{align*}
  where
  \begin{align*}
    I_1&:=\int^s_{\frac{s+t}{2}}\!\int_{\mR^d}\nabla^2_x Z(t,x;r,z)H_{s,y}(r,z)\dif z\dif r,\\
    I_2&:=\int^{\frac{s+t}{2}}_t\!\!\!\int_{\mR^d}\nabla^2_x Z(t,x;r,z)(H_{s,y}(r,z)-H_{s,y}(r,x))\dif z\dif r,\\
    I_3&:=\int^{\frac{s+t}{2}}_t\left(\int_{\mR^d}\nabla^2_x Z(t,x;r,z)\dif z\right)H_{s,y}(r,x)\dif r.
  \end{align*}
  For $I_1$, by \eqref{eq21}, \eqref{NJ2}, \eqref{UY3} and \eqref{UY30}, we have
  \begin{align*}
    |I_1|&\preceq\int^s_{\frac{s+t}{2}}\!\int_{\mR^d}\xi_{\lambda,-2}(r-t,z-x)(\eta_{\alpha,0}+\xi_{\lambda,-1})(s-r,y-z)\dif z\dif r\\ &\preceq(s-t)^{-1}\int^s_t\!\!\!\int_{\mR^d}\xi_{\lambda,0}(r-t,z-x)(\eta_{\alpha,0}+\xi_{\lambda,-1})(s-r,y-z)\dif z\dif r\\
    &\preceq\eta_{\alpha,0}(s-t,y-x)+\xi_{\lambda,-1}(s-t,y-x).
  \end{align*}
  For $I_2$, by \eqref{eq21}, \eqref{NJ1}, \eqref{UY3} and \eqref{UY30}, we similarly have
  \begin{align*}
    |I_2|&\preceq\int^{\frac{s+t}{2}}_t\!\!\!\int_{\mR^d}\xi_{\lambda,-2}(r-t,z-x)|x-z|^\gamma \cdot\Big((\eta_{\alpha,-\gamma}+\xi_{\lambda,-\gamma-1})(s-r, y-z)\\
    &\qquad\qquad\qquad+(\eta_{\alpha,-\gamma}+\xi_{\lambda,-\gamma-1})(s-r, y-x)\Big)\dif z\dif r\\ &\preceq \int^{\frac{s+t}{2}}_t\!\!\!\int_{\mR^d} \xi_{\lambda/2,\gamma-2}(r-t,z-x)\cdot\Big((\eta_{\alpha,-\gamma}+\xi_{\lambda,-\gamma-1})(s-r, y-z)\\
    &\qquad\qquad\qquad+(\eta_{\alpha,-\gamma}+\xi_{\lambda,-\gamma-1})(s-r,y-x)\Big)\dif z\dif r\\
    &\preceq\eta_{\alpha,0}(s-t,y-x)+\xi_{\lambda/2,-1}(s-t,y-x).
  \end{align*}
  For $I_3$, by \eqref{00} below and \eqref{NJ2}, we have
  \begin{align*}
    |I_3|\preceq\left(\int^{\frac{s+t}{2}}_t(r-t)^{\frac{\beta}{2}-1}\dif r\right)(\eta_{\alpha,0}+\xi_{\lambda,-1})(s-t,y-x) \preceq(\eta_{\alpha,0}+\xi_{\lambda,-1})(s-t,y-x).
  \end{align*}
  Combining the above calculations, we obtain \eqref{UT1} for $n=1$.
  \\\vbox{~}
  (3) Suppose \eqref{UT1} holds for some $n\in\mN$. By the induction hypothesis, \eqref{NJ2} and Lemma \ref{Le21},
  \begin{align*}
    &|\nabla^2_xp_{n+1}(t,x;s,y)|\leq C_1(C_1(s-t)^{\frac{(2-\alpha)\wedge 1}{2}})^{n-1}\\ &\quad\times\int^s_t\!\!\!\int_{\mR^d}(\eta_{\alpha,0}+\xi_{\lambda/2,-1})(r-t,z-x) (\eta_{\alpha,0}+\xi_{\lambda,-1})(s-r,y-z)\dif z\dif r\\ &\quad\leq C_1(C_1(s-t)^{\frac{(2-\alpha)\wedge 1}{2}})^{n}(\eta_{\alpha,0}+\xi_{\lambda/2,-1})(s-t,y-x).
  \end{align*}
  Thus  we obtain \eqref{UT1}.
  \\
  \vbox{~}
  (4) The joint continuity of $\nabla^2_xp_n$ follows by the same argument as in Lemma \ref{Le32}. The proof is complete.
\end{proof}

Now we can prove the solvability of the integral equation \eqref{eqdu}.

\bt\label{Th1}
Under {\bf (H$^a$)}, {\bf (H$^\kappa$)} and $b\in\mK_2$, there exists a $\delta>0$ so that \eqref{eqdu} has a unique continuous solution $p(t,x;s,y)$ on $\mD^\delta_0$
such that
  \begin{align}
    |p(t,x;s,y)|\leq C_1(\xi_{\lambda,0}+\|\kappa\|_\infty\eta_{\alpha,2})(s-t,y-x)
    \quad \hbox{ on } \  \mD^{\delta}_0 \label{RT1}
  \end{align}
  for some constant $C_1>0$.
  Moreover,  the following hold.
  \begin{enumerate} [\rm (i)]
    \item (Gradient estimate) $\nabla_xp$ is continuous on $\mD^{\delta}_0$ and for some $C_2>0$ and all $(t,x;s,y)\in\mD^{\delta}_0$,
        \begin{align}
          |\nabla_x p(t,x;s,y)|\leq C_2(\xi_{\lambda,-1}+\|\kappa\|_\infty\eta_{\alpha,1})(s-t,y-x).  \label{RT2}
        \end{align}
    \item (On-diagonal lower bound estimate) There is a constant $C_3>0$ such that for all $|y-x|\leq\sqrt{s-t}<\delta$,
        \begin{align}\label{On}
          p(t,x;s,y)\geq C_3 (s-t)^{-d/2}.
        \end{align}
    \item (C-K equation) For all $(t,x;s,y)\in\mD^{\delta}_0$ and $r\in(t,s)$, the following Chapman-Kolmogorov equation holds:
        \begin{align}\label{CK}
          \int_{\mR^d}p(t,x;r,z)p(r,z;s,y)\dif z=p(t,x;s,y).
        \end{align}
    \item (Fractional derivative estimate) Let $J:\mR^d\to\mR$ be a bounded measurable function satisfying \eqref{Sym1}. If in addition for $\alpha\in(0,1]$, $b\in \mK_1$ and for $\alpha\in(1,2)$, $b\in\bar\mK_\alpha$, then for some $C_4>0$ and all $(t,x;s,y)\in\mD^{\delta}_0$,
        \begin{align}\label{RT3}
          |\widetilde\sL^Jp(t,\cdot;s,y)(x)|\leq C_4\|J\|_\infty \eta_{\alpha,0}(s-t,y-x)
        \end{align}
        and
        \begin{align}
          p(t,x;s,y)=Z(t,x;s,y)+\int^s_t\!\!\!\int_{\mR^d}Z(t,x;r,z)\sL^{b,\kappa}_r p(r,\cdot;s,y)(z)\dif z\dif r. \label{eq}
        \end{align}
    \item (Second order derivative estimate) If $b$ and $\kappa$ are bounded measurable and satisfy \eqref{RT9}, then $\nabla^2_xp$ is continuous on $\mD^{\delta}_0$ and for some $C_5>0$,
        \begin{align}
         |\nabla^2_xp(t,x;s,y)|\leq C_5(\xi_{\lambda,-2} + \|\kappa\|_\infty \eta_{\alpha,0})(s-t,y-x).\label{SEC}
        \end{align}
    \end{enumerate}
\et

\begin{proof}
  (i) Let $\ell_{b,\kappa}(r)$ and the constant $c$ be as in Lemma  \ref{Le32} with $T = 1$. In view of $\lim_{\delta\to 0} \ell_{b,\kappa}(\delta)$ $=$ $0$, one can choose a $\delta_1>0$ such that $c\ell_{b,\kappa}(\delta_1)\leq 1/2$. Thus  by \eqref{ep2}, the series $p(t,x;s,y):=\sum_{n=0}^{\infty} p_{n}(t,x;s,y)$ and $G(t,x;s,y):=\sum_{n=0}^{\infty}\nabla_xp_{n}(t,x;s,y)$ are locally uniformly absolutely convergent on $\mD^{\delta_1}_0$.
  In particular, $p, G$ are continuous on $\mD^{\delta_1}_0$ and
  $$
    \nabla_x p(t,x;s,y)=G(t,x;s,y).
  $$
  On the other hand, due to \eqref{ET1} we have
  $$
    \sum_{n=0}^{m} p_{n}(t,x;s,y)= p_0(t,x;s,y)+\int^s_t\!\!\!\int_{\mR^d} \sum_{n=0}^{m-1}p_n(t,x;r,z)\sL^{b,\kappa}_r Z(r,\cdot;s,y)(z)\dif z\dif r.
  $$
  By taking limits and the dominated convergence theorem, we obtain (\ref{eqdu}). Moreover, by \eqref{ep2}, we have for $j=0,1$,
  \begin{align}
  \begin{split}
    &|\nabla^j_xp(t,x;s,y)-\nabla^j_xp_0(t,x;s,y)|\leq \sum_{n=1}^{\infty}|\nabla^j_xp_{n}(t,x;s,y)|\\
    &\qquad\leq 2c (\|\kappa\|_\infty\eta_{\alpha,2-j} +\ell_{b,\kappa}\xi_{\lambda,-j})(s-t,y-x),
    \end{split}\label{ET5}
  \end{align}
  which in turn implies \eqref{RT1} and \eqref{RT2}.

  Now let $\tilde p(t,x;s,y)$ be another solution to (\ref{eqdu}) satisfying (\ref{RT1}). As in the proof of \eqref{ep2},
  we can show that for all $n\in\mN$,
  \begin{align*}
   |p(t,x;s,y)-\tilde p(t,x;s,y)|\leq&~ C_1\left( c(c\ell_{b,\kappa}(s-t))^{n-1} + (c\ell_{b,\kappa}(s-t))^{n}\right)\|\kappa\|_\infty \eta_{\alpha,2}(s-t,y-x)\\
   &~+ C_1 (c\ell_{b,\kappa}(s-t))^{n}\xi_{\lambda,0}(s-t,y-x).
  \end{align*}
  Since $c\ell_b(s-t)\leq{1 /2}$, letting $n\rightarrow \infty$, we obtain the uniqueness.
  \medskip\\
  (ii) By \eqref{ET5}, if $|x-y|\leq\sqrt{s-t}$, then we have
  \begin{align*}
    p(t,x;s,y)&\geq p_0(t,x;s,y)-2c (\|\kappa\|_\infty\eta_{\alpha,2} + \ell_{b,\kappa}\xi_{\lambda,0})(s-t,y-x)\\
    &\geq (c_1-c_2 \ell_{b,\kappa}(s-t))(s-t)^{-d/2}\geq c_1(s-t)^{-d/2}/2,
  \end{align*}
  provided $s-t\leq\delta_2$ with $\delta_2$ being small enough so that $\ell_{b,\kappa}(\delta_2)\leq \frac{c_1}{2c_2}$.
    \medskip\\
  (iii) By Fubini's theorem, we have
  \begin{align*}
    \int_{\mR^d}p(t,x;r,z)p(r,z;s,y)\dif z=\sum_{n=0}^\infty\sum_{m=0}^n\int_{\mR^d}p_m(t,x;r,z)p_{n-m}(r,z;s,y)\dif z.
  \end{align*}
  For proving \eqref{CK}, it suffices to prove that for each $n\in\mN_0$,
  \begin{align}\label{UT2}
    \sum_{m=0}^n\int_{\mR^d}p_m(t,x;r,z)p_{n-m}(r,z;s,y)\dif z=p_n(t,x;s,y).
  \end{align}
  For $n=0$, it is clearly true by \eqref{CK1}. Now suppose \eqref{UT2} holds for some $n\in\mN$. Write
  $$
    \sum_{m=0}^{n+1}\int_{\mR^d}p_m(t,x;r,z)p_{n+1-m}(r,z;s,y)\dif z=I_1+I_2,
  $$
  where
  \begin{align*}
    I_1&:=\int_{\mR^d}p_{n+1}(t,x;r,z)p_0(r,z;s,y)\dif z,\\
    I_2&:=\sum_{m=0}^{n}\int_{\mR^d}p_m(t,x;r,z)p_{n+1-m}(r,z;s,y)\dif z.
  \end{align*}
  Observing that
  \begin{align}
    \int_{\mR^d}\sL^{b,\kappa}_tp_0(t,\cdot;r,z)(x)p_0(r,z;s,y)\dif z=\sL^{b,\kappa}_t p_0(t,\cdot;s,y)(x),
  \end{align}
  by \eqref{ET1} and Fubini's theorem, we have
  \begin{align*}
    I_1&=\int_{\mR^d}\left(\int^r_t\!\!\int_{\mR^d}p_{n}(t,x;r',z')\sL^{b,\kappa}_{r'}p_0(r',\cdot; r,z)(z')\dif z'\dif r'\right) p_0(r,z;s,y)\dif z\\ &=\int^r_t\!\!\int_{\mR^d}p_{n}(t,x;r',z')\left(\int_{\mR^d}\sL^{b,\kappa}_{r'}p_0(r',\cdot; r,z)(z') p_0(r,z;s,y)\dif z\right)\dif z'\dif r'\\
    &=\int^r_t\!\!\int_{\mR^d}p_{n}(t,x;r',z')\sL^{b,\kappa}_{r'}p_0(r',\cdot; s,y)(z')\dif z'\dif r'.
  \end{align*}
  Similarly, by \eqref{ET1} and the induction hypothesis, we have
  $$
    I_2=\int^s_r\!\!\int_{\mR^d}p_n(t,x;r',z')\sL^{b,\kappa}_{r'}p_0(r',\cdot;s,y)(z')\dif z'\dif r'.
  $$
  Hence,
  $$
    I_1+I_2=\int^s_t\!\!\int_{\mR^d}p_{n}(t,x;r',z')\sL^{b,\kappa}_{r'}p_0(r',\cdot; s,y)(z')\dif z'\dif r'=p_{n+1}(t,x;s,y),
  $$
  which gives \eqref{UT2}.
    \medskip\\
  (iv) If in addition for $\alpha\in(0,1]$, $b\in \mK_1$ and for $\alpha\in(1,2)$, $b\in\bar\mK_\alpha$, then by \eqref{UT01},
  and since $\lim_{\delta\to}\ell_{b,\kappa}(\delta)=0$, where $\ell_{b,\kappa}(r)$ is the same as in Lemma \ref{Th2} with $T = 1$, as above there is a $\delta_3>0$ such that the series $\sum_{n=0}^\infty|\widetilde\sL^{\kappa}p_n(t,\cdot;s,y)(x)|$ is locally uniformly convergent on $\mD^{\delta_3}_0$. In particular, we have on $\mD^{\delta_1\wedge\delta_3}_0$,
  $$
    \widetilde\sL^{\kappa}p(t,\cdot;s,y)(x)=\sum_{n=0}^\infty\widetilde\sL^{\kappa}p_n(t,\cdot;s,y)(x),
  $$
  and so \eqref{RT3} holds. Moreover, by \eqref{equ}, we also have \eqref{eq}.
    \medskip\\
  (v) Let $C_5$ be the constant $C_1$ in \eqref{UT1} with $T=1$. As above, it follows from \eqref{UT1} with $T=1$ that there is a $\delta_4>0$ such that $C_5\delta^{\frac{(2-\alpha)\wedge 1}{2}}_4={1 /2}$.
    \medskip\\
  Finally, we just need to set $\delta:=\delta_1\wedge\delta_2\wedge\delta_3\wedge\delta_4$.
\end{proof}

\medskip

Using \eqref{CK}, we can extend the definition of $p(t, x; s, y)$ to $\mD^\infty_0$.

\medskip

\begin{proof}[Proof of Theorem \ref{main1}]
  We shall show that $p(t,x;s,y)$ in Theorem \ref{Th1} has all the properties in Theorem \ref{main1}. First of all,
  we need to extend the definition of $p(t,x;s,y)$ from $\mD^{\delta}_0$ to $\mD^\infty_0$ by C-K equation as follows: If $\delta<s-t\leq 2\delta$, we define
  \begin{align}\label{Ext}
    p(t,x;s,y)=\int_{\mR^d}p\big(t,x;\tfrac{t+s}{2},z\big)p\big(\tfrac{t+s}{2},z;s,y\big)\dif z.
  \end{align}
  Proceeding this procedure, we can extend $p$ to $\mD^\infty_0$.
  \\
  \vbox{~}
  (1), (2), (3) and (4) follow from \eqref{RT1}, \eqref{RT2},  \eqref{CK}, \eqref{RT3}, \eqref{Ext} and Lemma \ref{Le21}.
  As for \eqref{eqdu} and \eqref{eqdu0} on $\mD^\infty_0$, it follows by \eqref{eqdu} and \eqref{eqdu0} on $\mD^{\delta}_0$ and C-K equation.
  \\
  \vbox{~}
  (5) By the construction of $p(t,x;s,y)$ (see the proof of Theorem \ref{Th1}(i)), there is a $\delta_1 > 0$ such that $p(t,x;s,y) = \sum_{n=0}^\infty p_n(t,x;s,y)$
  on $\mathbb{D}^{\delta_1}_0$. Then, by the dominated convergence theorem and Fubini's theorem, for any $s,t\geq 0$ with $0< s-t < \delta_1$ and $x \in \mathbb{R}^d$,
  \begin{align*}
    &\int_{\mathbb{R}^d} p(t,x;s,y) \dif y = \sum_{n=0}^\infty \int_{\mathbb{R}^d} p_n(t,x;s,y) \dif y= \int_{\mathbb{R}^d} Z(t,x;s,y) \dif y \\
    &+ \sum_{n=1}^\infty \int_t^s\!\!\! \int_{\mathbb{R}^d} p_{n-1}(t,x;r,z) \left(\int_{\mathbb{R}^d} \mathscr{L}^{b,\kappa}_r Z(r,\cdot;s,y)(z) \dif y\right) \dif z \dif r
    = 1 + 0 = 1.
  \end{align*}
  Hence, the conservativeness (\ref{eq:conser}) follows from the above equality and (\ref{Ext}).
\medskip \\
  (6) Let $P_{t,s}f(x):=\int_{\mR^d}p(t,x;s,y)f(y)\dif y.$ By (\ref{eqdu}), we have for any bounded measurable $f$,
  \begin{align}\label{RT5}
    P_{t,s}f(x)=P^{(Z)}_{t,s}f(x)+\int^s_t\! P_{t,r}\sL^{b,\kappa}_rP^{(Z)}_{r,s}f(x)\dif r.
  \end{align}
  Hence, by (\ref{ET01}),  for $f\in C^2_b(\mR^d)$, we have
  \begin{align}
    P_{t,s}f(x)-f(x)&=P^{(Z)}_{t,s}f(x)-f(x)+\int^s_t\! P_{t,r}\sL^{b,\kappa}_rP^{(Z)}_{r,s}f(x)\dif r\no\\
    &=\int^s_t\!P^{(Z)}_{t,r}\sL^{a}_rf(x)\dif r+\int^s_t\!P_{t,r}\sL^{b,\kappa}_rP^{(Z)}_{r,s}f(x)\dif r,\label{RT4}
  \end{align}
  and, by \eqref{RT5} and Fubini's theorem,
  \begin{align*}
    \int^s_t\! P_{t,r}\sL^{a}_rf(x)\dif r&-\int^s_t\!P^{(Z)}_{t,r}\sL^{a}_rf(x)\dif r =\int^s_t\!\!\!\int^r_t\! P_{t,u}\sL^{b,\kappa}_{u}P^{(Z)}_{u,r}\sL^{a}_rf(x)\dif u\dif r\\
    &=\int^s_t\!P^{}_{t,u}\sL^{b,\kappa}_{u}\left(\int^s_u\! P^{(Z)}_{u,r}\sL^{a}_rf(x)\dif r\right)\dif u\\
    &=\int^s_t\!P_{t,u}\sL^{b,\kappa}_{u}\Big(P^{(Z)}_{u,s}f(x)-f(x)\Big)\dif u.
  \end{align*}
  Combining this with \eqref{RT4}, we obtain
  \begin{align*}
    P_{t,s}f(x)-f(x)&=\int^s_t\!P_{t,r}\Big(\sL^{a}_r+\sL^{b,\kappa}_r\Big) f(x)\dif r=\int^s_t\!P_{t,r}\sL_r f(x)\dif r .
  \end{align*}
  (7) By \eqref{eqdu} and \eqref{cz2}, we only need to show that
  $$
    \lim_{|t-s|\to 0}\sup_{x\in\mR^d}\left|\int^s_t\! P_{t,r}\sL^{b,\kappa}_rP^{(Z)}_{r,s}f(x)\dif r\right|=0.
  $$
  Notice that by \eqref{RT1}, \eqref{ep4} and Lemma \ref{lem:3p},
  \begin{align*}
    &\left|\int^s_t\! P_{t,r}\sL^{b,\kappa}_rP^{(Z)}_{r,s}f(x)\dif r\right| \preceq\|f\|_\infty\int^s_t\!\!\! \int_{\mR^d}
    \!\int_{\mR^d}(\xi_{\lambda,0} + \|\kappa\|_\infty \eta_{\alpha,2})(r-t,z-x)\\
    &\qquad\qquad\times\Big(|\tilde{b}(r,z)|\cdot \xi_{\lambda,-1}(s-r,y-z)+\|\kappa\|_\infty \eta_{\alpha,0}(s-r,y-z)\Big)\dif z\dif y\dif r\\
    &\qquad\preceq\|f\|_\infty\Bigg(\int_{\mR^d}\Big(\left(\|\kappa\|_\infty(s-t)^{1-\frac{\alpha}{2}} + K_2^{\widetilde{b}}(s-t)\right) \left(\xi_{\lambda,0}+ \|\kappa\|_\infty \eta_{\alpha,2}\right)(s-t,y-x)\\
    &\qquad\qquad\quad+ \|\kappa\|_\infty\left(\eta_{\alpha,2}+\|\kappa\|_\infty \eta_{\alpha,4-\alpha}\right)(s-t,y-x)\Big)\dif y\Bigg)\\
    &\qquad\leq C\|f\|_\infty\left(\ell_{b,\kappa}(s-t)+ (\ell_{b,\kappa}(s-t))^2\right),
  \end{align*}
  where $\ell_{b,\kappa}(r)$ is the same as in Lemma \ref{Le32}, and the constant $C$ is independent of $x$ and $s,t$. Since $b\in\mK_2$,
  one derives the desired limit.
\end{proof}

\section{Proof of the lower bound}

\subsection{Positivity}\label{S:4.1}

In this subsection,
we show that if
 $\kappa \geq 0$,
then the continuous kernel $p(t,x;s,y)$ constructed in Theorem \ref{Th1} is non-negative.

\bt\label{Th41}
  Under {\bf (H$^a$)}, {\bf (H$^\kappa$)} and $b\in\mK_2$, if $\kappa\geq 0$, then the heat kernel $p(t,x;s,y)$ constructed in Theorem \ref{Th1} is non-negative.
  \et

\begin{proof} We divide the proof into three steps.
\medskip\\
 (i) Let $b_\eps:=b*\rho_\eps^{(1)}$ and $\kappa_\eps:=\kappa*\rho_\eps^{(2)}$, where $\rho_\eps^{(1)}\in C^\infty_c(\mR^{d+1})$ is supported in $B_\eps\subset \mathbb{R}^{d+1}$ satisfying $\int\rho_\eps^{(1)}=1$ and $\rho_\eps^{(2)}\in C^\infty_c(\mR^{2d+1})$ is supported in $B_\eps\subset \mathbb{R}^{2d+1}$ satisfying
  $\int\rho_\eps^{(2)}=1$. For example,
  $$
  \kappa_\eps(t,x,z):=\int_{\mR^{2d+1}}\kappa(s,y_1,y_2)\rho_\eps^{(2)}(t-s,x-y_1,z-y_2)\dif y_1 \dif y_2 \dif s.
  $$
  Let $p_\eps(t,x;s,y)$ be the corresponding heat kernel constructed in Theorem \ref{Th1}. We claim that
  \begin{align}\label{RT25}
    p_\eps(t,x;s,y)\geq 0   \quad \text{ on } \mathbb{D}_0^\infty.
  \end{align}
  While it is possible to use Hille-Yosida-Ray theorem and Courr\'ege's first theorem to prove the claim,
  as it was done in \cite[Theorem 1.2]{CW}
  (see also \cite[Lemma 4.1]{Wang.2015.MZ521} and \cite[Lemma 4.9]{ChenHu.2015.SPA2603}),
  we present here a self-contained proof based on the maximum principle established in Theorem \ref{Le51} in the Appendix.
  Notice that for any $\theta > 0$ and $\varepsilon \in (0,1)$,
  \begin{align}\label{RT24}
    K^{b_\eps}_\theta(\delta)\leq K^{b}_\theta(\delta), \quad
    \|\kappa_\eps\|_\infty\leq\|\kappa\|_\infty,
  \end{align}
  and by \eqref{eq:KatoLocalNorm}, each pair of $b_\eps$ and $\kappa_\eps$ satisfies \eqref{RT9}. Hence, by \eqref{RT1} and \eqref{RT2}, we have the following uniform estimate:
  \begin{align}
    \sup_{\eps\in(0,1)}|\nabla^j_x p_\eps(t,x;s,y)|\leq C(\xi_{\lambda,-j}+\|\kappa\|_\infty\eta_{\alpha,2-j})(s-t,y-x),
    \quad   j=0,1.  \label{RT22}
  \end{align}
  Let $f\in C^2_b(\mR^d)$ be non-negative. Fix $s>0$ and set
  $$
    u_\eps(t,x):=\int_{\mR^d}p_\eps(t,x;s,y)f(y)\dif y,\quad t<s.
  $$
  In order to show \eqref{RT25}, it suffices to check that the conditions of Theorem \ref{Le51} are satisfied for $u_\eps$. Since $b_\eps, \kappa_\eps\in C^\infty_b(\mR_+\times\mR^d)$, by Theorem \ref{Th1}, one sees that $u_\eps\in C_b([0,s]\times\mR^d)$ and $(t,x)\mapsto\nabla^j u_\eps(t,x)$ is continuous for $j=0,1,2$, and
  $$
    u_\eps(t,x)=P^{(Z)}_{t,s}f(x)+\int^s_tP^{(Z)}_{t,r}\sL^{b_\eps,\kappa_\eps}_r u_\eps(r,x)\dif r,
  $$
  where $P^{(Z)}_{t,s}f(x):=\int_{\mR^d}Z(t,x;s,y)f(y)\dif y$. Moreover, by \eqref{RT1}, \eqref{RT2}, \eqref{SEC} and Lemma \ref{Le22}, as in the proof of \eqref{ET32}, we have for any $\gamma\in(0,(2-\alpha)\wedge1)$,
  $$
    |\sL^{b_\eps,\kappa_\eps}_r u_\eps(r,x)-\sL^{b_\eps,\kappa_\eps}_r u_\eps(r,x')|\leq C_\eps(s-r)^{-\frac{\alpha+\gamma}{2}}|x-x'|^\gamma.
  $$
  Hence, for Lebesgue almost all $t\in[0,s]$ (see Lemma \ref{Le54} below),
  $$
    \p_tu_\eps+\sL^a_t u_\eps+\sL^{b_\eps,\kappa_\eps}_t u_\eps=0.
  $$
  Thus  we can use Theorem \ref{Le51} to conclude \eqref{RT25}.
\medskip\\
   (ii) In this step, we  show that
  \begin{align}\label{Equi}
    \mbox{$p_\eps$ is locally equi-continuous on $\mD^\infty_0$}.
  \end{align}
  Recall that
  \begin{align}\label{Lim1}
    p_\eps(t,x;s,y)=Z(t,x;s,y)+\int^s_t\!\!\!\int_{\mR^d}p_\eps(t,x;r,z)\sL^{b_\eps, \kappa_\eps}_r Z(r,\cdot;s,y)(z)\dif z\dif r.
  \end{align}
  Let $D$ be a compact subset of $\mD^{T_0}_{t_0}\subset\mD^\infty_0$,
  where $0<t_0<T_0<\infty$. We first show that
  \begin{align}\label{Lim4}
    \lim_{|h|\to 0}\sup_{\eps\in(0,1)}\sup_{(t,x,s,y)\in D} |p_\eps(t,x;s,y)-p_\eps(t,x;s,y+h)|=0.
  \end{align}
  Notice that for any $\delta < (s-t)/4$, by \eqref{RT22}, \eqref{ep4},  (\ref{eqcom}) and (\ref{ineq}),
  \begin{align}\label{eq:lim6}
  \begin{split}
    &\quad\left|\int^s_{s-\delta}\!\int_{\mR^d}p_\eps(t,x;r,z)\sL^{b_\eps, \kappa_\eps}_r Z(r,\cdot;s,y)(z)\dif z\dif r\right|\\
    &\preceq \int^s_{s-\delta}\!\int_{\mR^d}(\xi_{\lambda,0}+\|\kappa\|_\infty \eta_{\alpha,2})(r-t,z-x)\\
    &\quad\times\Big((|b_\eps(r,z)|+\|\kappa\|_\infty)\cdot \xi_{\lambda,-1}(s-r,y-z) +\|\kappa\|_\infty\eta_{\alpha,0}(s-r,y-z)\Big)\dif z\dif r\\
    &\preceq \left((s-t-\delta)^{-d/2}+(s-t-\delta)^{1-(d+\alpha)/2}\right)\\
    &\quad\times \int_0^\delta\!\!\!\int_{\mR^d}\Big(|b_\eps(s-r,y-z)|\cdot \eta_{\alpha,\alpha-1}(r,z) +\|\kappa\|_\infty\eta_{\alpha,0}(r,z)\Big)\dif z\dif r\\
    &\preceq (s-t)^{-d/2}\left(K_2^b(\delta)+\|\kappa\|_\infty \delta^{1/2} + \|\kappa\|_\infty \delta^{1-\alpha/2}\right).
    \end{split}
  \end{align}
  By \eqref{RT22}, \eqref{eq202}, \eqref{ET33}, Lemmas \ref{lem:3p} and \ref{Le21}, for any $\beta'\in(0,\beta)$ and $\gamma\in(0,(2-\alpha)\wedge 1)$, we have
  \begin{align*}
    &\left|\int^{s-\delta}_{t}\!\!\!\int_{\mR^d}p_\eps(t,x;r,z)\sL^{b_\eps, \kappa_\eps}_r (Z(r,\cdot;s,y)-Z(r,\cdot;s,y+h))(z)\dif z\dif r\right|\\
    &\preceq |h|^{\beta'}\Bigg[\int^{s-\delta}_{t}\!\!\!\int_{\mR^d} (\xi_{\lambda,0}
     + \|\kappa\|_\infty \eta_{\alpha,2})(r-t,z-x)\left(|b_\eps(r,z)|+\|\kappa_\eps\|_\infty\right)\\
    &\qquad\times\Big(\xi_{\lambda,-\beta'-1}(s-r,y-z)+\xi_{\lambda,-\beta'-1}(s-r,y+h-z)\Big)\dif z\dif r\Bigg]\\
    &\quad+|h|^{\beta'\gamma} \|\kappa\|_\infty\Bigg[\int^s_t\!\!\!\int_{\mR^d} (\xi_{\lambda,0}+ \|\kappa\|_\infty \eta_{\alpha,2})(r-t,z-x)\\
    &\qquad\times\Big(\eta_{\alpha,-\beta'\gamma}(s-r,y-z)+\eta_{\alpha,-\beta'\gamma}(s-r,y+h-z)\Big)\dif z\dif r\Bigg]\\
    &\preceq |h|^{\beta'}\delta^{-\beta'/2}\Bigg[\int^s_t\!\!\!\int_{\mR^d} (\xi_{\lambda,0} + \|\kappa\|_\infty \eta_{\alpha,2})(r-t,z-x)\left(|b_\eps(r,z)|+\|\kappa_\eps\|_\infty\right)\\
    &\quad\times\Big(\xi_{\lambda,-1}(s-r,y-z)+\xi_{\lambda,-1}(s-r,y+h-z)\Big)\dif z\dif r\Bigg]\\
    &\quad+|h|^{\beta'\gamma}\left((s-t)^{2-\beta'\gamma-(d+\alpha)} + \|\kappa\|_\infty(s-t)^{4-\alpha-\beta'\gamma-(d+\alpha)}\right)\\
    &\preceq|h|^{\beta'}\delta^{-\beta'/2} K^b_2(s-t)\left(1 + \|\kappa\|_\infty\right)(s-t)^{-d/2}\\
    &\quad+|h|^{\beta'\gamma} (s-t)^{2-\beta'\gamma-(d+\alpha)}\left(1+ \|\kappa\|_\infty(s-t)^{2-\alpha}\right),
  \end{align*}
  which together with \eqref{eq:lim6} gives \eqref{Lim4}. Similarly, we can show
  \begin{align*}
    \lim_{|h|\to 0}\sup_{\eps\in(0,1)}\sup_{(t,x;s,y)\in D} |p_\eps(t,x;s,y)-p_\eps(t,x+h;s,y)|=0,\\
    \lim_{|\delta|\to 0}\sup_{\eps\in(0,1)}\sup_{(t,x;s,y)\in D} |p_\eps(t,x;s,y)-p_\eps(t+\delta,x;s,y)|=0,\\
    \lim_{|\delta|\to 0}\sup_{\eps\in(0,1)}\sup_{(t,x;s,y)\in D} |p_\eps(t,x;s,y)-p_\eps(t,x;s+\delta,y)|=0.
  \end{align*}
  Thus  we obtain \eqref{Equi}.
  \medskip\\

  (iii) By \eqref{Equi},  Ascoli-Arzela's lemma and a diagonalization argument,
  there exist a subsequence $\eps_k$ (still denoted by $\eps$ for simplicity) and a continuous function $\bar p$ such that
  \begin{align}\label{RT23}
    p_{\eps}(t,x;s,y)\to \bar p(t,x;s,y) \mbox{ for all $(t,x;s,y)\in\mD^\infty_0$.}
  \end{align}
  Now we want to take limits  on
  both sides of \eqref{Lim1}. First, by \eqref{RT24}, \eqref{RT22}, \eqref{RT23} and the dominated convergence theorem, we have
  $$
    \int^s_t\!\!\!\int_{\mR^d}p_\eps(t,x;r,z)\widetilde{\sL^{\kappa_\eps}_r} Z(r,\cdot;s,y)(z)\dif z\dif r\stackrel{\eps\to 0}{\to}
    \int^s_t\!\!\!\int_{\mR^d}\bar p(t,x;r,z)\widetilde{\sL^{\kappa}_r} Z(r,\cdot;s,y)(z)\dif z\dif r.
  $$
  Next, for the term containing $\widetilde{b_\eps}(r,\cdot)\cdot\nabla$, by \eqref{RT22} and \eqref{eq21}, we have
  \begin{align*}
    &\left|\int^s_t\!\!\!\int_{\mR^d}p_\eps(t,x;r,z)\widetilde{b_\eps}(r,z)\nabla_z Z(r,z;s,y)\dif z\dif r
    -\int^s_t\!\!\!\int_{\mR^d}\bar p(t,x;r,z)\tilde{b}(r,z)\nabla_zZ(r,z;s,y)\dif z\dif r\right|\\
    &\quad\preceq \int^s_t\!\!\!\int_{\mR^d} (\xi_{\lambda,0}+\|\kappa_\varepsilon\|_\infty\eta_{\alpha,2})(r-t,z-x) \cdot |\widetilde{b_\eps}-\tilde{b}|(r,z)\cdot\xi_{\lambda_1,-1}(s-r,y-z)\dif z\dif r\\
    &\quad+\int^s_t\!\!\!\int_{\mR^d}|p_\eps-\bar p|(t,x;r,z)\cdot|\tilde{b}(r,z)|\cdot \xi_{\lambda_1,-1}(s-r,y-z)\dif z\dif r=:I_1(\eps)+I_2(\eps),
  \end{align*}
  where $\tilde{b}$ is defined in \eqref{eq:tildebDef} and $\widetilde{b_\eps}$ is defined by
  \begin{equation*}
    \widetilde{b_\eps}(t,x):=b_\eps(t,x)+1_{\alpha\in(1,2)}\int_{|z|>1}z\kappa_\eps(t,x,z)|z|^{-d-\alpha}\dif z -1_{\alpha\in(0,1)} \int_{|z|\leq 1}z\kappa_\eps(t,x,z)|z|^{-d-\alpha}\dif z.
  \end{equation*}
  For $I_1(\eps)$, by \eqref{eqcom}, \eqref{eq3p}, Lemma \ref{Le28} and the dominated convergence theorem, we have
  \begin{align*}
    I_1(\eps)&\preceq \int^s_t\!\!\!\int_{\mR^d} (\eta_{2,2}+\|\kappa_\varepsilon\|_\infty\eta_{\alpha,2})(r-t,z-x) \cdot |\widetilde{b_\eps}-\tilde{b}|(r,z)\cdot\eta_{2,1}(s-r,y-z)\dif z\dif r\\
    &\preceq \left(\eta_{2,0}+\|\kappa\|_\infty\eta_{\alpha,0}\right)(s-t,y-x) \int^s_t\!\!\!\int_{\mR^d}|\widetilde{b_\eps}-\tilde{b}|(r,z)\cdot(r-t)(s-r)^{1/2}\\
    &\quad\times\Big(\eta_{2,0}(r-t,z-x)+\eta_{2,0}(s-r,y-z)\Big)\dif z\dif r\\
    &\preceq\left(\eta_{2,1}+\|\kappa\|_\infty\eta_{\alpha,1}\right)(s-t,y-x) \Bigg[\int^s_t\!\!\!\int_{\mR^d}|\widetilde{b_\eps}-\tilde{b}|(r,z)\cdot\eta_{2,1}(r-t,z-x)\dif z\dif r\\
    &\quad+\int^s_t\!\!\!\int_{\mR^d}|\widetilde{b_\eps}-\tilde{b}|(r,z)\cdot\eta_{2,1}(s-r,y-z)\dif z\dif r\Bigg]\stackrel{\eps\to 0}{\to} 0.
  \end{align*}
  For $I_2(\eps)$, by \eqref{RT22}, \eqref{RT23} and the dominated convergence theorem again, we have
  $$
    I_2(\eps)\stackrel{\eps\to 0}{\to} 0.
  $$
  Combining the above limits, one sees that $\bar p(t,x;s,y)$ satisfies \eqref{eqdu}. Similarly, we can show $\bar p$ also satisfies
  \eqref{eqlpe}.
  Thus by the uniqueness, we obtain $\bar p=p$ and so, $p\geq 0$.
\end{proof}

\subsection{Lower bound estimate}\label{S:4.2}

Throughout this subsection, we assume $\kappa \geq 0$.
By Theorem \ref{Th41} and \eqref{e:1.13}, $\{p(t,x;s,y): (t,x;s,y)\in\mD^\infty_0\}$ is
a family of transition probability density functions.
It determines a Feller process
$$
  \Big(\Omega,\sF, (\mP_{t,x})_{(t,x)\in\mR_+\times\mR^d}; (X_s)_{s\geq 0}\Big),
$$
with the property that
$$
  \mP_{t,x}\big(X_s=x,\,0\leq s\leq t\big)=1,
$$
and for $r\in[t,s]$ and $E\in\cB(\mR^d)$,
\begin{align}\label{Tr}
  \mE_{t,x}\big(X_{s}\in E \,|\, X_{r}\big)=\int_{E}p(r,X_{r};s,y)\dif y.
\end{align}
Moreover, for any $f\in C^2_b(\mR^d)$, it follows from (\ref{eqge}) and the Markov property of $X$ that under $\mP_{t,x}$, with respect to the filtration $\sF_s:=\sigma\{X_r, r\leq s\}$,
\begin{align}
  M^f_s:=f(X_s)-f(X_t)-\int^s_t\sL_r f(X_r)\dif r\ \mbox{ is a martingale}.\label{ERY1}
\end{align}
In other words, $\mP_{t,x}$ solves the martingale problem for $(\sL_t, C^2_b (\mR^d))$.

For any Borel set $E$, let
$$
  \sigma_E:=\inf\{s\geq 0:X_s\in E\},\qquad \tau_E:=\inf\{s\geq 0:X_s\notin E\},
$$
be the first hitting and exit time, respectively, of $E$.

 Below for simplicity, we write
$$
  \cJ_\alpha(t,x,z):=\kappa(t,x,z-x)|z-x|^{-d-\alpha}.
$$
We now determine the L\'evy system of
the Feller process $X$, which in particular is a Hunt process.
The proof of the following result is similar to that of \cite{ChenKimSong.2012.AP2483}. For completeness, we give a detailed proof here.
\bl\label{Lemar}
  Suppose that $E$ and $F$ are two disjoint open sets in $\mR^d$. Then
  $$
    \sum_{t<r\leq s}1_{\{X_{r-}\in E, X_{r}\in F\}}-\int^s_t\!1_{E}(X_r)\!\!\int_{F}\cJ_\alpha(r,X_r,z)\dif z\dif r
  $$
  is a $\mP_{t,x}$-martingale for every $t\geq 0$ and $x\in\mR^d$.
\el

\begin{proof}
First of all, by (\ref{ERY1}), $\{X_s, s\geq 0\}$ is a semi-martingale under $\mP_{t,x}$. Let $f\in C^2_b(\mR^d)$ with $f=0$ on $E$ and $f=1$ on $F$. By It\^o's formula, we have
\begin{align*}
  f(X_s)-f(X_t)&=\sum_{i=1}^d\int^s_t\!\p_if(X_{r-})\dif X_r+\sum_{t<r\leq s}\beta_r(f)+\frac{1}{2}\sum_{i,j=1}^d\int^s_t\!\p_i\p_jf(X_{r-})\dif\<X^c,X^c\>_r,
\end{align*}
where
$$
  \beta_r(f):=f(X_r)-f(X_{r-})-\sum_{i=1}^d\p_if(X_{r-})(X_r-X_{r-}).
$$
Hence,
\begin{align*}
  N_s:=&\int^s_t\! 1_{E}(X_{r-})\dif M^f_r =\sum_{i=1}^d\int^s_t\! 1_{E}(X_{r-})\p_if(X_{r-})\dif X_r+\sum_{t<r\leq s}1_{E}(X_{r-})\beta_r(f)\\
  &+\frac{1}{2}\sum_{i,j=1}^d\int^s_t\!1_{E}(X_{r-})\p^2_{ij}f(X_{r-})\dif \<X^c,X^c\>_r-\int^s_t\! 1_{E}(X_r)\sL_rf(X_r)\dif r
\end{align*}
is a martingale. Since $f(x)=\p_if(x)=\p^2_{ij}f(x)=0$ for $x\in E$, we further have
\begin{align*}
  N_s&=\sum_{t<r\leq s}1_{E}(X_{r-})f(X_r)-\int^s_t\! 1_E(X_r)\sL^{\kappa}_rf(X_r)\dif r\\
  &=\sum_{t<r\leq s}1_{E}(X_{r-})f(X_r)-\int^s_t\! 1_E(X_r)\int_{\mR^d}f(z)\cJ_\alpha(r,X_r,z)\dif z\dif r.
\end{align*}
By choosing $f_n\in C^2_b(\mR^d)$ with $f_n|_E=0,~ f_n|_F=1$ and $f_n\to 1_F$, then taking limits, we obtain the desired result.
\end{proof}

In particular, Lemma \ref{Lemar} implies that
$$
  \mE_{t,x}\left[\sum_{t<r\leq s}1_{E}(X_{r-})1_{F}(X_r)\right]=\mE_{t,x}\left[\int^s_t\!\!\!\int_{\mR^d}1_{E}(X_r)1_{F}(z)\cJ_\alpha(r,X_r,z)\dif z\dif r\right].
$$
Let $f$ be a non-negative measurable function on $\mR_+\times\mR^d\times\mR^d$ that vanishes along the diagonal. By a routine measure theoretic argument, we get
$$
  \mE_{t,x}\left[\sum_{t<r\leq s}f(r,X_{r-},X_r)\right]=\mE_{t,x}\left[\int^s_t\!\!\!\int_{\mR^d}f(r,X_r,z)\cJ_\alpha(r,X_r,z)\dif z\dif r\right].
$$
Finally, we can follow the same method as in \cite{ChenKumagai.2010.RMI551} to get the following  L\'evy system.

\bl\label{lem:XLS}
  Let $f$ be a non-negative measurable function on $\mR_+\times\mR^d\times\mR^d$ that vanishes along the diagonal. Then for every stopping time $T$ (with respect to the filtration of $X$), we have
  \begin{align}
    \mE_{t,x}\left[\sum_{t<r\leq T}f(r,X_{r-},X_r)\right]=\mE_{t,x}\left[\int^T_t\!\!\!\int_{\mR^d}f(r,X_r,z)\cJ_\alpha(r,X_r,z)\dif z\dif r\right].\label{sys}
  \end{align}
\el

We need the following two lemmas.
\bl\label{Le34}
  For any $M>0$, there is a constant $\gamma_0\in(0,1)$ depending only on $M$ and the constants in \eqref{eqlpe} with $T = 1$ such that for all $\delta\in(0,M)$,
  \begin{align}
    \sup_{(t,x)\in\mR_+\times\mR^d}\mP_{t,x}\Big(\tau_{B(x,\delta)}\leq t+\gamma_0\delta^2\Big)\leq \tfrac{1}{2}.  \label{eqtaue}
  \end{align}
\el

\begin{proof}
  For simplicity, write $\tau:=\tau_{B(x,\delta)}$. By the strong Markov property of $X$, we have
  \begin{align}
    \mP_{t,x}\Big(\tau\leq t+r\Big)&\leq\mP_{t,x}\Big(\tau\leq t+r; X_{t+r}\in B(x,\tfrac{\delta}{2})\Big) +\mP_{t,x}\Big(X_{t+r}\notin B(x,\tfrac{\delta}{2})\Big)\no\\
    &=\mP_{t,x}\left(\mP_{\tau,X_\tau}\Big(X_{t+r}\in B(x,\tfrac{\delta}{2})\Big); \tau\leq t+r\right)+\mP_{t,x}\Big(X_{t+r}\notin B(x,\tfrac{\delta}{2})\Big)\no\\ &\leq\mP_{t,x}\left(\mP_{\tau,X_\tau}\Big(|X_{t+r}-X_\tau|\geq\tfrac{\delta}{2}\Big); \tau\leq t+r\right) +\mP_{t,x}\Big(X_{t+r}\notin B(x,\tfrac{\delta}{2})\Big)\no\\
    &\leq 2\sup_{t\leq s\leq t+r}\sup_{x\in\mR^d}\mP_{s,x}\Big(|X_{t+r}-x|\geq\tfrac{\delta}{2}\Big).\label{AS}
  \end{align}
  On the other hand, by \eqref{Tr} and \eqref{eqlpe}, there is a constant $C>0$ depending only the constants in \eqref{eqlpe} with $T = 1$ such that for all $r\in(0,1)$, $t\leq s\leq t+r$ and $x\in\mR^d$,
  \begin{align*}
    \mP_{s,x}\Big(|X_{t+r}-x|\geq\tfrac{\delta}{2}\Big)
    &=\int_{{|y-x|\geq\frac{\delta}{2}}}p(s,x; t+r, y)\dif y\leq C\int_{{|y-x|\geq\frac{\delta}{2}}}(\eta_{\alpha,2}+\xi_{\lambda,0})(t+r-s,y-x)\dif y\\ &=C\int_{{|y|\geq\frac{\delta}{2}}}(\eta_{\alpha,2}+\xi_{\lambda,0})(t+r-s,y)\dif y\leq C(t+r-s)(\delta^{-\alpha}+\delta^{-2}),
  \end{align*}
  which together with \eqref{AS} yields
  \begin{equation}\label{eq:firstExitUp2}
    \mP_{t,x}\Big(\tau_{B(x,\delta)}\leq t+r\Big)\leq Cr(\delta^{-\alpha}+\delta^{-2}).
  \end{equation}
  By letting $r=\gamma_0\delta^2$ with $\gamma_0$ being small enough, we obtain \eqref{eqtaue}.
\end{proof}

For a number $\theta > 0$, define
\begin{equation*}
  m^{(\theta)}_\kappa = \inf_{(t,x) \in\mR_+\times \mathbb{R}^d} \essinf_{|z|<\theta} \kappa(t,x,z).
\end{equation*}

\bl\label{Lemmahe}\label{lem:firstExitLow}
 Let $M>0$ and $\gamma_0$ be the same as in Lemma \ref{Le34}. For all $\gamma\in(0,\gamma_0]$,
 there exists a constant $c_1>0$ such that for all $\delta\in(0,M)$, $t>0$, $\theta>4\delta$ and $x,y\in\mR^d$ with $\theta/2 \geq |x-y|\geq 2\delta$,
  \begin{align}\label{GF1}
    \mP_{t,x}\Big(\sigma_{B(y,\delta)}<t+\gamma\delta^2\Big)\geq c_1\frac{\delta^{d+2}\cdot m^{(\theta)}_\kappa}{|y-x|^{d+\alpha}}.
  \end{align}
\el

\begin{proof}
  For $\delta\in(0,\theta/4)$ and $\gamma\in(0,\gamma_0]$, by (\ref{eqtaue}) we have
  \begin{align}\label{CC2}
    \mE_{t,x}\Bigg(\int_t^{(t+\gamma\delta^2)\wedge \tau_{B(x,\delta)}}\dif r\Bigg)\geq \gamma\delta^2\mP_{t,x}\Big(\tau_{B(x,\delta)}\geq t+\gamma\delta^2\Big) \geq \gamma\delta^2/2.
  \end{align}
  Noticing that
  $$
    X_r\notin B(y,\delta)\ \ \text{when}\ \ t<r<(t+\gamma\delta^2)\wedge \tau_{B(x,\delta)},
  $$
  we have
  $$
    {\bf 1}_{X_{(t+\gamma\delta^2)\wedge \tau_{B(x,\delta)}}\in B(y,\delta)}=\sum_{t<r\leq (t+\gamma\delta^2)\wedge \tau_{B(x,\delta)}}{\bf 1}_{X_{r}\in B(y,\delta)}.
  $$
  By (\ref{sys}) and the definition of $\cJ_\alpha$, we have
  \begin{align}
    \mP_{t,x}\Big(\sigma_{B(y,\delta)}<t+\gamma\delta^2\Big)&\geq\mP_{t,x}\Big(X_{(t+\gamma\delta^2)\wedge \tau_{B(x,\delta)}}\in B(y,\delta)\Big)\no\\
    &=\mE_{t,x}\int_t^{(t+\gamma\delta^2)\wedge \tau_{B(x,\delta)}}\!\!\int_{B(y,\delta)}\frac{\kappa(t,X_r,z-X_r)}{|z-X_r|^{d+\alpha}}\dif z\dif r.\label{CC3}
  \end{align}
  Since $\theta/2 \geq |y-x|\geq 2\delta$, we have for all $z\in B(y,\delta)$ and $X_r\in B(x,\delta)$,
  $$
    |z-X_r|\leq|y-z|+|y-x|+|X_r-x| < 2|y-x| \leq \theta.
  $$
  Thus  by \eqref{CC3} and \eqref{CC2}, we have
  \begin{align*}
    \mP_{t,x}\Big(\sigma_{B(y,\delta)}<t+\gamma\delta^2\Big)&\geq \frac{\gamma\delta^2}{2}\int_{B(y,\delta)}\frac{m^{(\theta)}_\kappa}{(2|y-x|)^{d+\alpha}}\dif z
    \geq c_2\frac{\delta^{d+2}\cdot m^{(\theta)}_\kappa}{|y-x|^{d+\alpha}}.
  \end{align*}
  The proof is complete.
\end{proof}

Now we can give

\begin{proof}[Proof of lower bound \eqref{Low}]
  First of all, by the on-diagonal estimate \eqref{On} and \eqref{CK}, for any $T>0$, there is a constant $C>0$ such that
  \begin{align}\label{ER2}
    p(t,x;s,y)\geq C(s-t)^{-d/2},\ \ |y-x|\leq\sqrt{s-t}\leq \sqrt{T}.
  \end{align}
  In fact, let $\delta$ be as in Theorem \ref{Th1}. If $s-t\leq 2\delta$ and $|y-x|\leq \sqrt{s-t}$, then by \eqref{Ext}, we have {\setlength{\arraycolsep}{0.2pt}
  \begin{eqnarray*}
    p(t,x;s,y)&=&\int_{\mR^d}p(t,x;\tfrac{t+s}{2},z)p(\tfrac{t+s}{2},z; s,y)\dif z\\
    &\geq&\int_{B(\frac{x+y}{2},\frac{\sqrt{s-t}}{2})}p(t,x;\tfrac{t+s}{2},z)p(\tfrac{t+s}{2},z; s,y)\dif z\\
    &\stackrel{(\ref{On})}{\geq}& C^2_3(s-t)^{-d}\Vol\Big(B(\tfrac{x+y}{2},\tfrac{\sqrt{s-t}}{2})\Big)\succeq (s-t)^{-d/2}.
  \end{eqnarray*}}
  Using the above estimate repeatedly, we obtain (\ref{ER2}).

  Now by \eqref{ER2} and a standard chain argument (see \cite{FabesStroock.1986.ARMA327}), for any $T>0$, there are positive constants $C,\lambda_2>0$ such that
  \begin{equation}\label{eq:hkLowExp}
    p(t,x;s,y)\geq C\xi_{\lambda_2,0}(s-t,y-x) \ \mbox{ on $\mD^T_0$}.
  \end{equation}
  Thus  to prove \eqref{Low}, it remains to show that there is a $C'>0$ such that
  $$
    p(t,x;s,y)\geq C'm_k\eta_{\alpha,2}(s-t,y-x) \ \mbox{ on $\mD^T_0$},
  $$
  where $m_\kappa := \inf_{(t,x)}\essinf_{z\in \mathbb{R}^d} \kappa(t,x,z)$.
  If $|y-x|\leq \sqrt{s-t}$, due to \eqref{eq:hkLowExp}, there is nothing to prove. Below we assume
  $$
    |y-x|>\sqrt{s-t}=:3\delta.
  $$
  Let $\gamma_0\in(0,1)$ be the same as in Lemma \ref{Le34} and $\theta \in (0,\infty]$ be an arbitrary fixed number. By the strong Markov property of $X$ and Lemma \ref{lem:firstExitLow}, we have for any
  $\theta/2 >|y-x|\geq 3\delta$,
  {\setlength{\arraycolsep}{0pt}
  \begin{eqnarray*}
    \mP_{t,x}\Big(X_{t+2\gamma_0\delta^2}\in B\big(y,2\delta\big)\Big)&\geq& \mP_{t,x}\left(\sigma:=\sigma_{B(y,\delta)}<t+\gamma_0\delta^2;
    \sup_{s\in[\sigma,\sigma+\gamma_0\delta^2]}|X_s-X_\sigma|< \delta\right)\no\\
    &=&\mE_{t,x}\left(\mP_{\sigma,X_\sigma}\left(\sup_{s\in[\sigma,\sigma+\gamma_0\delta^2]}|X_s-X_\sigma| < \delta\right);
    \sigma_{B(y,\delta)} < t+\gamma_0\delta^2\right)\no\\
    &\geq& \inf_{r,z}\mP_{r,z}\left(\tau_{B(z,\delta)}> r+\gamma_0\delta^2\right) \mP_{t,x}\left(\sigma_{B(y,\delta)}< t+\gamma_0\delta^2\right)\no\\
    &\stackrel{(\ref{eqtaue})}{\geq}&\frac{1}{2} \mP_{t,x}\left(\sigma_{B(y,\delta)}<t+\gamma_0\delta^2\right)\stackrel{(\ref{GF1})}{\geq} \frac{c_1\delta^{d+2}\cdot m^{(\theta)}_\kappa}{2|y-x|^{d+\alpha}}.
  \end{eqnarray*}}
  Hence, by (\ref{ER2}), we have for any
  $\theta/2>|y-x|\geq 3\delta$,
  \begin{align}\label{eq:hkLowPol}
    p(t,x;s,y)&\geq \int_{B(y,2\delta)}p\big(t,x;t+2\gamma_0\delta^2,z\big) p\big(t+2\gamma_0\delta^2,z;s,y\big)\dif z\notag\\
    &\geq \inf_{z\in B(y,2\delta)}p\big(t+2\gamma_0\delta^2,z; s,y\big)\mP_{t,x}\Big(X_{t+2\gamma_0\delta^2}\in B\big(y,2\delta\big)\Big)\notag\\
    &\geq C(s-t)^{-d/2}\cdot\frac{c_1\delta^{d+2}\cdot m^{(\theta)}_\kappa}{2|y-x|^{d+\alpha}}\geq C'm^{(\theta)}_\kappa\eta_{\alpha,2}(s-t,y-x).
  \end{align}
  The proof is complete by setting $\theta = \infty$ in the above inequality.
\end{proof}

\section{The truncated case}

Unlike  the upper and lower bound in Corollary \ref{cor:2sided} (see also (\ref{eq:paEst})),
in the truncated  case, the heat kernel $p(t, x; s, y)$ decays exponentially as $|x-y|\to 0$,
In this section, we prove Theorem \ref{thm:trunc} by establishing the following two lemmas.

\begin{lemma} \label{L:5.1}
  Under {\bf (H$^a$)}, {\bf (H$^\kappa$)}, {\bf (HU$^\kappa$)} and $b \in \mathbb{K}_2$, for any $T>0$,
  there are constants $C_1, \lambda_1>0$ such that on $\mathbb{D}^T_0$,
  \begin{equation*}
    p(t,x;s,y) \leq C_1\left(\xi_{\lambda_1,0}+ \bar\eta_{\alpha,1/8}\right)(s-t,y-x),
  \end{equation*}
  where $\bar\eta_{\alpha,\lambda}$ is defined by \eqref{Eta}.
\end{lemma}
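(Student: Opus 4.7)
The strategy is to refine the upper bound \eqref{eqlpe} of Theorem \ref{main1} only in the far regime $|y-x|>1/2$, exploiting the fact that under \textbf{(HU$^\kappa$)} the jumps of the associated Feller process $X$ are of size at most $1$. Note that whenever $|y-x|\leq 1/2$ one has by definition $\bar\eta_{\alpha,1/8}(s-t,y-x)=\eta_{\alpha,2}(s-t,y-x)$, so in that region the desired bound follows immediately from \eqref{eqlpe}. It therefore suffices to treat the case $|y-x|>1/2$.

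In that regime I would work with the Feller process $X$ constructed in Section \ref{S:4.1} (applicable since \textbf{(HU$^\kappa$)} in particular gives $\kappa\geq 0$), whose L\'evy system is $(\cJ_\alpha(r,z,w)\dif w, r)$ with $\cJ_\alpha(r,z,w)=\kappa(r,z,w-z)|w-z|^{-d-\alpha}$; by \textbf{(HU$^\kappa$)} this density vanishes whenever $|w-z|>1$, so every jump of $X$ has size at most $1$. Writing $X_s-X_t$ as the sum of a continuous semimartingale component (driven by the Brownian integral with coefficient $\sigma=\sqrt a$) and the sum of the jumps, the event $\{X_s\in B(y,\tfrac{1}{16})\}$ forces either the continuous part to have displacement of order $|y-x|$, which is controlled by the Gaussian tail inherited from $Z(t,x;s,y)$ and contributes the $\xi_{\lambda_1,0}$ term, or the accumulated jump displacement to be of order $|y-x|$. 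For the latter, since each jump is bounded by $1$, the jumps of size $\geq 1/16$ must be at least of order $|y-x|$ in number (the infinitely many jumps of size $<1/16$ being controlled, in $L^2$, by $c(s-t)$ via the second-moment bound $\int_{|w|\leq 1/16}|w|^2\cJ_\alpha(r,z,z+w)\dif w\leq c$ coming from \eqref{sys} and $\alpha<2$).

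To quantify this, I would use the L\'evy system formula \eqref{sys} with $f(r,u,v)=1_{\{|v-u|\geq 1/16\}}$ together with the uniform intensity bound
$$
\sup_{r,z}\int_{|w|\geq 1/16}\cJ_\alpha(r,z,z+w)\,\dif w\ \leq\ c_0<\infty,
$$
and apply a standard Poisson-domination argument to deduce
$$
\mP_{t,x}\bigl(N_{[t,s]}\geq n\bigr)\ \leq\ \frac{(c_0(s-t))^n}{n!}\ \leq\ \bigl(ec_0(s-t)/n\bigr)^n,\qquad n\in\mN,
$$
where $N_{[t,s]}$ counts the jumps of $X$ of size $\geq 1/16$ during $[t,s]$. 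Choosing $n$ proportional to $|y-x|$ and summing the resulting geometric tail produces a bound of the form $C\bigl(C'(s-t)/|y-x|\bigr)^{c_\star|y-x|}$, with $c_\star>0$ an absolute constant depending only on the chaining constants; combining this with the on-diagonal bound \eqref{On} and the Chapman--Kolmogorov equation \eqref{e:1.13} (to pass from ball probabilities to pointwise density estimates) transfers the inequality to
$$
p(t,x;s,y)\ \leq\ C_1\bigl(\xi_{\lambda_1,0}+\bar\eta_{\alpha,1/8}\bigr)(s-t,y-x),
$$
after choosing the concrete exponent $1/8$ small enough to absorb all multiplicative constants in the chaining.

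The main obstacle will be making the decomposition of the displacement into continuous and purely-discontinuous parts rigorous in our time-inhomogeneous, non-symmetric setup, and handling the infinite-activity small jumps below the threshold $1/16$ via a sharp $L^2$ compensator estimate derived from \eqref{sys}; the truncation hypothesis \textbf{(HU$^\kappa$)} enters crucially to guarantee that, uniformly in $(t,x)$, jumps of size $\geq 1/16$ have bounded total intensity, so that their count has genuine Poisson-type tails.
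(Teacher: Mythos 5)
Your approach is genuinely different from the paper's. The paper avoids any decomposition of $X$ into continuous and jump parts; instead it exploits the bounded-jump hypothesis via the exit time $\tau := \tau_{B(x,1)}$: under \textbf{(HU$^\kappa$)} the L\'evy system forces $X_\tau \in B(x,2)$ a.s., because a jump out of $B(x,1)$ has size at most $1$. It then proves, by induction on $n\in\mN$, the discrete estimate $p(t,x;s,y)\leq (c_1(s-t)/n)^n$ whenever $|y-x|\geq 2n$ and $s-t\leq T$, using only the strong Markov property at $\tau$, the exit-time bound $\mP_{t,x}(\tau\leq t+r)\leq Cr$ (the $\delta=1$ case of \eqref{eq:firstExitUp2}), and the inductive hypothesis applied at $(\tau,X_\tau)$, where $|y-X_\tau|\geq 2(n-1)$. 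This avoids all martingale tail technology and keeps the argument self-contained inside the framework already built in Section \ref{S:4.1}; the conversion to the form $\bar\eta_{\alpha,1/8}$ is then a short calculation. Your strategy, via counting jumps of size $\geq 1/16$ and Chernoff/Poisson tails, would require in addition a rigorous It\^o decomposition of the time-inhomogeneous Feller process and an exponential (super)martingale argument to upgrade the first-moment information in \eqref{sys} to the tail estimate $\mP_{t,x}(N_{[t,s]}\geq n)\leq (c_0(s-t))^n/n!$.

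Beyond the extra machinery, there is a concrete gap in your treatment of the small jumps. You claim to control the accumulated displacement of jumps of size $<1/16$ ``in $L^2$, by $c(s-t)$ via the second-moment bound.'' An $L^2$ or even a Gaussian-Bernstein bound on this martingale only yields, for a displacement $a\asymp |y-x|$ large relative to $s-t$, a tail of the form $\e^{-c|y-x|}$. But when $s-t$ is small, the target bound $\bar\eta_{\alpha,1/8}(s-t,y-x)=\big((s-t)/|y-x|\big)^{|y-x|/8}$ is much smaller than $\e^{-c|y-x|}$, and neither is $\e^{-c|y-x|}$ dominated by the Gaussian term $\xi_{\lambda_1,0}(s-t,y-x)$ in this regime. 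So the $L^2$ control is not sufficient. To make your route work you would need the bounded-jump Bennett (rather than Bernstein) inequality for the compensated small-jump martingale, which — since the jump size is $\leq 1/16$ and the predictable quadratic variation is $\leq c(s-t)$ — yields a tail $\exp\big(-c|y-x|\log(|y-x|/(s-t))\big)\asymp \big((s-t)/|y-x|\big)^{c|y-x|}$, matching the target. As written, the proposal conflates the $L^2$ bound with this sharper tail; that step needs to be spelled out for the argument to close. In contrast, the paper's exit-time chaining handles the continuous and small-jump displacements simultaneously through the single exit-time bound, which is why it does not face this issue.
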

\begin{proof}
  By (\ref{eqlpe}), we already know that
   \begin{equation}\label{eq:hkUpLoc}
    p(t,x;s,y) \leq C_0\left(\xi_{\lambda_0,0}+ \|\kappa\|_\infty\eta_{\alpha,2}\right)(s-t,y-x)\ \mbox{ on }\mathbb{D}^T_0.
  \end{equation}
  However, the term $\eta_{\alpha,2}(s-t,y-x)$ is too large when $|y-x|$ is large.
  So, we need to establish a proper upper bound for this case.
  We use induction method to show that there is a constant $c_1\geq 1$ such that for all $n \geq 1$,
  \begin{equation}\label{eq:hkUpTemp}
    p(t,x;s,y) \leq \left(\frac{c_1(s-t)}{n}\right)^{n},\quad 0<s-t\leq T,~|y-x|\geq 2n,
  \end{equation}
  First of all, by (\ref{eq:hkUpLoc}), we have for all $s-t\in(0,T]$ and $|y-x|\geq 2$,
  $$
  p(t,x;s,y) \leq C_0\left((s-t)^{-d/2}\e^{-2\lambda_0/(s-t)}+ \|\kappa\|_\infty 2^{-d-\alpha}(s-t)\right)  \leq c_2(s-t).
  $$
   Hence, (\ref{eq:hkUpTemp}) is true for $n=1$ as long as $c_1\geq c_2$.

   \medskip

  Next we assume that \eqref{eq:hkUpTemp} holds for all $n\leq N$. We want to show \eqref{eq:hkUpTemp} for
  $$
  s-t\in(0,T],\ \ |y-x|\geq 2(N+1).
  $$
  Fix such $x,y$ and let $\tau\coloneqq\tau_{B(x,1)}$ be the first exit time of $X$ from ball $B(x,1)$.
  By the L\'{e}vy system of $X$ (see Lemma \ref{lem:XLS}) and {\bf (HU$^\kappa$)}, we have
  $$
    \mathbb{P}_{t,x}\left(X_\tau \in B(x,2)^c\right) = \mathbb{E}_{t,x}\left[\int_t^\tau\!\!\! \int_{B(x,2)^c} \kappa(r,X_r,z-X_r)\dif z\dif r\right] = 0,
  $$
  where we have used the fact that for $r<\tau$ and $z\in B(x,2)^c$,
   $$
   |z-X_r|\geq |z-x|-|X_r-x|> 1.
   $$
  Let $t_n:=t+\frac{n(s-t)}{N+1}$ for $n=0,1,\cdots, N+1$. By the strong Markov property of $X$,
  \begin{align}
    p(t,x;s,y) =&\, \mathbb{E}_{t,x}\left[p(\tau,X_\tau;s,y);  \tau< s\right]\no\\
    =&\, \mathbb{E}_{t,x}\left[p(\tau,X_\tau;s,y);  \tau < s,X_\tau \in B(x,2)\right]\no\\
    =&\, \sum_{n=0}^N\mathbb{E}_{t,x}\left[p(\tau,X_\tau;s,y);  \tau \in[t_n,t_{n+1}),X_\tau \in B(x,2)\right]\no\\
    \leq&\, \sum_{n=0}^N\sup_{(r,z)\in[t_n,t_{n+1})\times B(x,2)}p(r,z;s,y) \cdot \mathbb{P}_{t,x}\left(\tau\leq t_{n+1}\right).\label{KA1}
  \end{align}
  Noting that for all $z \in B(x,2)$ and $|y-x|\geq 2(N+1)$,
  $$
  |y-z|\geq |y-x| - |z-x| \geq 2N,
  $$
  by the induction hypothesis, we have
  \begin{equation}\label{KA2}
    \sup_{(r,z)\in[t_n,t_{n+1})\times B(x,2)}p(r,z;s,y) \leq \left(\frac{c_1(s-t_n)}{N}\right)^N=c^N_1\left(\frac{(N+1-n)(s-t)}{N(N+1)}\right)^N.
  \end{equation}
  On the other hand, by (\ref{eq:firstExitUp2}) with $r=(n+1)(s-t)/(N+1)$ and $\delta = 1$,
  \begin{equation}\label{KA3}
    \mathbb{P}_{t,x}\left(\tau\leq t_{n+1}\right)\leq C \frac{(n+1)(s-t)}{N+1}, \quad n\leq N .
  \end{equation}
Combining \eqref{KA1}-\eqref{KA3}, we get for all $s-t\in (0,T]$ and $|y-x|\geq 2(N+1)$,
  \begin{align*}
    p(t,x;s,y) \leq&\, c^N_1\cdot C\sum_{n=0}^N \left(\frac{(N+1-n)(s-t)}{N(N+1)}\right)^N \frac{(n+1)(s-t)}{N+1}\\
    =&\, c^N_1\cdot C\left(\frac{s-t}{N+1}\right)^{N+1} \sum_{n=1}^{N+1} (N+2-n) \left(\frac{n}{N}\right)^N,
  \end{align*}
  where $C\geq 1$. Since $s \mapsto (N+2-s)(s/N)^N$ is increasing on $[0,N]$, we have
  \begin{align*}
    \sum_{n=1}^{N+1} (N+2-n) \left(\frac{n}{N}\right)^N &= \left(\frac{N+1}{N}\right)^N + 2 + \sum_{n=1}^{N-1} (N+2-n) \left(\frac{n}{N}\right)^N\\
    &\leq \e+2+\int_0^N (N+2-s)(s/N)^N\dif s\\
    &= \e+2+\frac{1}{N^N}\left(\frac{(N+2)N^{N+1}}{N+1} - \frac{N^{N+2}}{N+2}\right)\\
    &= \e+2+\frac{N}{N+1}\cdot\frac{3N+4}{N+2} \leq 10.
  \end{align*}
  Therefore,
  $$
  p(t,x;s,y) \leq c^N_1\cdot 10 C \left(\frac{s-t}{N+1}\right)^{N+1}.
  $$
  Thus   \eqref{eq:hkUpTemp}  is proven for $c_1=c_2\vee (10 C)$.

  \medskip

  Finally, for $|y-x|\geq 2$, choosing $n\in\mN$ so that $2n\leq |y-x| < 2(n+1)$, by (\ref{eq:hkUpTemp}), we have
  \begin{align*}
    p(t,x;s,y) &\leq \left(\frac{c_1(s-t)}{n}\right)^n \preceq \left(\frac{2(n+1)}{n}\cdot\frac{c_1(s-t)}{|y-x|}\right)^{\frac{n}{2(n+1)}\cdot |y-x|}\\
    &\leq \left(\frac{4c_1(s-t)}{|y-x|}\right)^{\frac{|y-x|}{4}}\preceq\left(\frac{s-t}{|y-x|}\right)^{\frac{|y-x|}{8}},
  \end{align*}
  which together with \eqref{eq:hkUpLoc} gives the desired estimate.
\end{proof}

\begin{lemma}\label{thmhkFiniteLow}
  Under {\bf (H$^a$)}, {\bf (H$^\kappa$)}, {\bf (HL$^\kappa$)} and $b \in \mathbb{K}_2$, for any $T>0$,
  there are constants $C_2,\lambda_2>0$ such that on $\mathbb{D}^T_0$,
  \begin{equation*}
    p(t,x;s,y) \geq C_2\left(\xi_{\lambda_2,0}+ \bar\eta_{\alpha,8}\right)(s-t,y-x).
  \end{equation*}
\end{lemma}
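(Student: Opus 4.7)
The Gaussian piece $\xi_{\lambda_2,0}(s-t,y-x)$ already follows from the proof of Theorem \ref{thm:posiLow} in Section \ref{S:4.2}: the on-diagonal bound \eqref{ER2} and the standard chain argument that yields \eqref{eq:hkLowExp} rely only on the positivity of $p$ (Theorem \ref{Th41}) and make no use of the support of $\kappa$. For the near-diagonal polynomial tail I would rerun the L\'evy-system computation of Lemma \ref{Lemmahe} with truncation parameter $\theta=1$: assumption \textbf{(HL$^\kappa$)} gives $m^{(1)}_\kappa\geq\kappa_0>0$, so the chain of inequalities leading to \eqref{eq:hkLowPol} delivers
\[
p(t,x;s,y)\geq C\kappa_0\,\eta_{\alpha,2}(s-t,y-x) \qquad \hbox{for } \sqrt{s-t}\leq |y-x|\leq 1/4.
\]
Since $\eta_{\alpha,2}(s-t,y-x)=\bar\eta_{\alpha,8}(s-t,y-x)$ on $\{|y-x|\leq 1/2\}$, combining this with the Gaussian piece already proves the lemma on the range $|y-x|\leq 1/4$.

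For $|y-x|>1/4$ I would split into two sub-cases according to the ratio $(s-t)/|y-x|$. When $s-t\geq |y-x|/8$ one has $|y-x|\leq 8T$ and $|y-x|^2/(s-t)\leq 8|y-x|\leq 64T$, so $\xi_{\lambda_2,0}(s-t,y-x)\geq c(T)>0$, while $\bar\eta_{\alpha,8}$ is bounded on this region; hence the already-established Gaussian lower bound suffices. When $s-t<|y-x|/8$, I would chain short near-diagonal steps via the Chapman--Kolmogorov identity \eqref{e:1.13}. Set $n:=\lceil 8|y-x|\rceil$, $x_k:=x+\frac{k}{n}(y-x)$, $t_k:=t+\frac{k}{n}(s-t)$, and $\rho:=|y-x|/(8n)$, so that $|x_{k+1}-x_k|=|y-x|/n\leq 1/8$ and $|z_{k+1}-z_k|\asymp |y-x|/n$ whenever $z_k\in B(x_k,\rho)$. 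Iterating \eqref{e:1.13},
\[
p(t,x;s,y)\geq \int_{B(x_1,\rho)}\!\!\cdots\!\!\int_{B(x_{n-1},\rho)}\prod_{k=0}^{n-1}p(t_k,z_k;t_{k+1},z_{k+1})\,\dif z_1\cdots\dif z_{n-1}.
\]
In this sub-case $|y-x|/n\geq\sqrt{(s-t)/n}$, so the first-part estimate yields $p(t_k,z_k;t_{k+1},z_{k+1})\gtrsim \kappa_0\,((s-t)/n)(|y-x|/n)^{-(d+\alpha)}$; multiplying the $n$ factors against the volume product $(c\rho^d)^{n-1}$ and collecting powers of $n$ and $|y-x|$ leaves a bound of the form $(c'(s-t)/|y-x|)^n$, which by the choice of $n$ is $\bar\eta_{\alpha,8}(s-t,y-x)$ up to an absolute constant.

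The main obstacle is the bookkeeping in the chained product: the $n$ denominators $(|y-x|/n)^{d+\alpha}$ and the volumes $\rho^{d(n-1)}$ must telescope cleanly so that the residual exponent of $(s-t)/|y-x|$ is at most $8|y-x|$. The choice $n=\lceil 8|y-x|\rceil$ provides the right amount of slack for this, and the absolute constant $C^n$ in front is absorbed into a slightly larger exponent $\lambda\leq 8$; the sub-case split confines the chained estimate to the regime where the polynomial piece really dominates the Gaussian, ensuring the hypothesis $|y-x|/n\geq\sqrt{(s-t)/n}$ holds uniformly at every step. This approach is a mixed diffusion--jump analogue of the chaining argument used for the truncated fractional Laplacian in \cite{ChenKimKumagai.2008.MA833}.
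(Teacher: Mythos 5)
Your overall strategy --- Gaussian piece from the on-diagonal bound plus chaining, near-diagonal polynomial piece via the L\'evy system computation, and a Chapman--Kolmogorov chain for the far-off-diagonal regime, with an explicit split according to the size of $(s-t)/|y-x|$ --- is the same as the paper's, and the explicit sub-case $s-t\geq|y-x|/8$ (handled by the Gaussian piece alone) usefully makes precise something the paper passes over quickly. However, there is a concrete gap in the chaining step: the choice $n:=\lceil 8|y-x|\rceil$ gives $n\geq 8|y-x|$, so there is no slack at all between the chain length and the target exponent. Write $u:=(s-t)/|y-x|$. Each factor in the chained product contributes a numerical constant $c$ (the product of the small-ball volume with the near-diagonal lower-bound constant from the L\'evy-system argument), and in general $c<1$; the chained lower bound therefore has the form $C(cu)^n$ for constants $C,c$. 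Since $u<1$ and $n\geq 8|y-x|$, one has $u^n\leq u^{8|y-x|}$ and $c^n\leq c^{8|y-x|}$, hence $(cu)^n\leq c^{8|y-x|}u^{8|y-x|}$, which is \emph{exponentially smaller} than the target $\bar\eta_{\alpha,8}(s-t,y-x)=u^{8|y-x|}$ as $|y-x|\to\infty$. No fixed constant $C_2$ can recover the claimed bound, and the closing remark that ``the absolute constant $C^n$ in front is absorbed into a slightly larger exponent $\lambda\leq 8$'' has the inequality backwards: absorbing a factor $c^n<1$ into the power of $u$ would force the exponent above $8$, yielding a strictly \emph{weaker} conclusion than Lemma \ref{thmhkFiniteLow}.

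The paper avoids this by taking $n$ to be the least integer strictly greater than $4|y-x|$, so that $n\leq 4|y-x|+1<6|y-x|<8|y-x|$ once $|y-x|>1/2$. The slack $8|y-x|-n>2|y-x|$ is exactly what makes the exponent $8$ reachable: with $u<1/8$,
\[
u^n \;=\; u^{8|y-x|}\,(1/u)^{\,8|y-x|-n}\;\geq\; 8^{\,2|y-x|}\,u^{8|y-x|},
\]
and this extra factor $8^{2|y-x|}$ compensates the accumulated per-step constants $c^n$ (for $u$ small enough, while in the complementary range $|y-x|$ is bounded by $T/u$ and the Gaussian term already dominates, as in your first sub-case). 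The paper also uses balls of fixed radius $1/8$, keeping the per-step volume independent of $n$ and the bookkeeping simple; retaining the polynomial factor $\eta_{\alpha,2}$ on each step, as you propose, is harmless but does not change the size of the per-step constant and so does not repair the slack issue.
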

\begin{proof}
  First of all, by (\ref{eq:hkLowExp}), we have
  \begin{align}\label{HJ9}
    p(t,x;s,y) \geq C\xi_{\lambda_2,0}(s-t,y-x)\mbox{ on $\mD^T_0$,}
  \end{align}
  and, by {\bf (HL$^\kappa$)} and  (\ref{eq:hkLowPol}), for $s-t\in(0,T]$ and $|y-x|\leq 1/2$,
  \begin{align}\label{eq:hkLowLoc}
  p(t,x;s,y) \geq c_1\eta_{\alpha,2}(s-t,y-x).
    \end{align}
  Thus  it remains to prove this lemma for $s-t\in (0,T]$ and $|y-x| > 1/2$.
  Let $n$ be the least integer greater than $4|y-x|$, that is,
  \begin{align}\label{HJ1}
  2\leq n-1\leq 4|y-x|<n.
  \end{align}
  For $i=0,1,\cdots,n$, let us define
  \begin{equation*}
    x_i = x+(y-x)i/n, \quad B_i:=B(x_i,1/8)\quad\text{and}\quad  t_i = t + (s-t)i/n.
  \end{equation*}
 Noticing that for all $i=0,1,\cdots,n-1$ and $z_i\in B_i$,
  $$
    |z_i-z_{i+1}|\leq |z_i-x_i|+|x_i-x_{i+1}|+|x_{i+1}-z_{i+1}| \leq \tfrac{1}{8}+\tfrac{|y-x|}{n}+\tfrac{1}{8}<\tfrac{1}{2},
  $$
  by (\ref{eq:hkLowLoc}), we have
  \begin{equation*}
    p(t_i,z_i;t_{i+1},z_{i+1}) \geq c_1\eta_{\alpha,2}(t_{i+1}-t_i,z_{i+1}-z_i) \geq  c_1\left(\frac{1}{2}+\sqrt{\frac{T}{3}}\right)^{-(d+\alpha)} \left(\frac{s-t}{n}\right) =:  c_2\left(\frac{s-t}{n}\right).
  \end{equation*}
  Hence, by C-K equation (\ref{CK}) and \eqref{HJ1},
  \begin{align*}
    p(t,x;s,y) \geq& \int_{B_{n-1}}\cdots\int_{B_1} p(t_0,x;t_1,z_1) \cdots p(t_{n-1},z_{n-1};s,y) \dif z_1\cdots \dif z_{n-1}\\
    \geq& ({\rm {Vol}}(B_1))^{n-1} \left(\frac{c_2(s-t)}{n}\right)^{n}
    \geq c_3 \left(\frac{s-t}{|y-x|}\right)^{8|y-x|},
  \end{align*}
 which together with \eqref{eq:hkLowLoc} and \eqref{HJ9} yields the desired estimate.
\end{proof}

Theorem \ref{thm:trunc} follows directly from the above two lemmas.

\section{Appendix}

\subsection{A maximum principle} In this subsection we show a maximum principle for operator $\sL$,
which has been used to show the uniqueness and positivity of heat kernels in this paper.
\bt\label{Le51}
  Let $a(t,x), b(t,x)$ and $\kappa(t,x,z)$ be bounded measurable with matrix $a(t,x)\geq 0$ and $\kappa(t,x,z)\geq 0$.
  For $T>0$, let $u(t,x)\in C_b([0,T)\times\mR^d)$ satisfy the following equation: for Lebesgue almost all $t\in[0,T)$,
  $$
    \p_t u+\sL_tu\leq 0,\ \ \varliminf_{t\uparrow T}u(t,x)\geq 0.
  $$
  Assume that for each $t\in[0,T)$, $x\in\mR^d$ and $j=0,1,2$, the mappings $x\mapsto\nabla^j_x u(t,x)$ and $t\mapsto\nabla^j_x u(t,x)$ are continuous. Then we have
  \begin{align}\label{NM4}
    u(t,x)\geq 0,\ \ (t,x)\in[0,T)\times\mR^d.
  \end{align}
\et
\begin{proof}
  First of all, we assume that for each $(t,x)\in[0,T)\times\mR^d$,
  \begin{align}\label{HG1}
    \p_t u(t,x)+\sL_tu(t,x)\leq \delta<0\ \mbox{ and }\ \lim_{|x|\to\infty} u(t,x)=\infty.
  \end{align}
  Suppose that \eqref{NM4} is not true.
  Since $\lim_{|x|\to\infty} u(t,x)=\infty$ and $\varliminf_{t\uparrow T}u(t,x)\geq 0$,
  there must be a point $(t_0,x_0)\in [0,T)\times\mR^d$ such that
  $$
    u(t_0,x_0)=\inf_{(t,x)\in[0,T)\times\mR^d}u(t,x)<0.
  $$
  Since $x\mapsto \nabla^2 u(t_0,x)$ is continuous and $x_0$ is an infimum point of $x\mapsto u(t_0,x)$, we have
  $$
    \nabla_x u(t_0,x_0)=0 \mbox{ and }(\p_i\p_j u(t_0,x_0))_{ij} \mbox{ is positive definite and symmetric}.
  $$
  Therefore,
  $$
    \sL_s u(t_0,\cdot)(x_0)\geq 0,
  $$
  and by \eqref{HG1},
  \begin{align}
    &u(t,x_0)-u(t_0,x_0)\leq(t-t_0)\delta-\int^t_{t_0}\sL_s u(s,\cdot)(x_0)\dif s\no\\
    &\quad\leq(t-t_0)\delta-\int^t_{t_0}\sL_su(s,\cdot)(x_0)-\sL_su(t_0,\cdot)(x_0)\dif s.\label{UT4}
  \end{align}
  Notice that by definition and the assumptions,
  \begin{align*}
    |\sL_su&(s,\cdot)(x_0)-\sL_su(t_0,\cdot)(x_0)|\\
    &\leq \|a\|_\infty\|\nabla^2_xu(s,x_0)-\nabla^2_xu(t_0,x_0)\|+\|b\|_\infty|\nabla_xu(s,x_0)-\nabla_x u(t_0,x_0)|\\
    &+\|\kappa\|_\infty\int_{|z|\leq 1}\!\left(\int^1_0\|\nabla^2_xu(s,x_0+\theta z)-\nabla^2_xu(t_0,x_0+\theta z)\|\dif\theta\right)|z|^{2-d-\alpha}\dif z\\
    &+\|\kappa\|_\infty\int_{|z|>1}|u(s,x_0+z)-u(s,x_0)-u(t_0,x_0+z)+u(t_0,x_0)|\cdot|z|^{-d-\alpha}\dif z.
  \end{align*}
  Since $s\mapsto \nabla^j_x u(s,x), j=0,1,2$ are continuous, by dividing both sides of \eqref{UT4} by $t-t_0$ and letting $t\downarrow t_0$, we obtain
  \begin{align*}
    0&\leq \delta+\lim_{t\to t_0}\frac{1}{t-t_0}\int^t_{t_0}|\sL_su(s,\cdot)(x_0)-\sL_su(t_0,\cdot)(x_0)|\dif s=\delta<0,
    \end{align*}
  which is impossible. In other words, the infimum is achieved at terminal time $T$, and \eqref{NM4} holds.

  Next, we need to drop the restriction \eqref{HG1}. Let
  $$
    f(x):=(1+|x|^2)^\beta,\ \ \beta\in(0,\alpha/2).
  $$
  For $\eps,\delta>0$, define
  $$
    u_{\eps,\delta}(t,x):=u(t,x)+\delta(T-t)+\eps\e^{-t} f(x).
  $$
  By easy calculations, one sees that for some $C>0$,
  $$
    |\sL_t f(x)|\leq C(1+|x|^\beta),
  $$
  and
  \begin{align*}
    \p_tu_{\delta,\eps}(t,x)+\sL_t u_{\delta,\eps}(t,x)\leq-\delta+\eps\e^{-t}(\sL_t f(x)-f(x))\leq-\delta/2<0,
  \end{align*}
  provided $\eps$ being small enough so that $\eps\e^{-t}(\sL_tf(x)-f(x))<\delta/2$. Moreover, clearly
  $$
    \lim_{x\to\infty}|u_{\eps,\delta}(t,x)|=\infty.
  $$
  Hence, by what we have proved,
  $$
    u_{\eps,\delta}(t,x)\geq 0.
  $$
  By letting $\eps\to 0$ and then $\delta\to 0$, we obtain \eqref{NM4}.
\end{proof}

\subsection{Proof of Theorem \ref{T23}}
$\!$Let $A$ be a $d\times d$ positive definite matrix and $Z_A(x)$ the $d$-dimensional Gaussian density function with covariance matrix $A$, i.e.,
\begin{align}
  Z_A(x):=\frac{\e^{-\<A^{-1}x,x\>/2}}{\sqrt{(2\pi )^d\det(A)}},\label{eqg0}
\end{align}
where $\det(A)$ denotes the determinant of $A$. For $x,y,z\in\mR^d$ and $t<s$, define
$$
  A_{t,s}(y):=\int^s_t a_r(y)\dif r\ \ \mbox{ and }\ \ Z_y(t,x;s,z):=Z_{A_{t,s}(y)}(z-x).
$$
Clearly, for each fixed $y$, $Z_y(t,x;s,z)$ is smooth in $(x, z)$ and Lipschitz continuous in $t$ and $s$.
By definition and \eqref{eqa1}, for each $j\in\mN_0$, there are constants $C_j,\lambda_j>0$ only depending on $d$ and $c_2$ such that for all $x,y,z\in\mR^d$ and $t<s$,
\begin{align}\label{Es44}
 |\nabla^j_xZ_y(t,x;s,z)|\leq C_j\xi_{\lambda_j,-j}(s-t,z-x).
\end{align}
Moreover, it is easy to see that $Z_y(t,x;s,z)$ satisfies the following equation: for fixed $s>0$ and Lebesgue almost all $t\in[0,s]$,
\begin{align}\label{NM1}
  \p_tZ_y(t,x;s,z)+\sL^{a_\cdot(y)}_tZ_y(t,\cdot;s,z)(x)=0,\ \ x,y,z\in\mR^d.
\end{align}
Now, let us define
\begin{align}
  Z_0(t,x;s,y):=Z_y(t,x;s,y).\label{z0}
\end{align}

The following lemma establishes the H\"older continuity of $\nabla^j_xZ_0(t,x;s,y)$ in $y$ for every  $j\geq 0$.

\bl\label{Le52}
  Under {\bf (\bf H$^a$)}, for each $T>0, j\in\mN_0$ and $\beta'\in(0,\beta)$, there exist constants $C_j,\lambda_j>0$ depending on $T, d$, $c_1$ and $c_2$ such that for all $x,y\in\mR^d$ and $0\leq t<s\leq T$,
  \begin{align}
  \begin{split}
    &|\nabla^j_xZ_0(t,x;s,y)-\nabla^j_xZ_0(t,x;s,y')|\\
    &\quad\leq C_j|y-y'|^{\beta'}\Big(\xi_{\lambda_j,-\beta'-j}(s-t,y-x)+\xi_{\lambda_j,-\beta'-j}(s-t,y'-x)\Big).
   \end{split}\label{yy}
  \end{align}
\el
\begin{proof}
  Observe that by the chain rule,
  $$
    \nabla^j Z_A(x)=H_j(A^{-1}, x)Z_A(x),
  $$
  where $H_j(A, x)$ is a vector valued polynomial of $A,x$, and has the following property
  \begin{align}\label{JH3}
    H_j(\ell^2 A, \ell^{-1} x)=\ell^jH_j(A, x),\ \ \ell>0.
  \end{align}
Thus  by the definition of $Z_0$, we have
  \begin{align}\label{KH1}
    \nabla^j_x Z_0(t,x;s,y)=H_j(A^{-1}_{t,s}(y), y-x)Z_{A_{t,s}(y)}(y-x).
  \end{align}
  Due to \eqref{eqa1}, there is a constant $C>0$ such that for all $0<t<s$,
  \begin{align}\label{KH2}
    C^{-1}(s-t)^d\leq \det(A_{t,s}(y))\leq C(s-t)^d,
  \end{align}
  and
  \begin{align}\label{KH3}
    \<A^{-1}_{t,s}(y)z,z\>\geq \lambda|z|^2/(s-t),\ \ |A^{-1}_{t,s}(y)z|\leq C|z|/(s-t).
  \end{align}
  Let us denote the left hand side of \eqref{yy} by $\sI$. Let $\delta>0$ be a small number, whose value will be determined below. We consider two cases:
  \medskip
  \\
  (Case $|y-y'|>\delta\wedge\sqrt{s-t}$).  In this case, by \eqref{Es44} we have
  \begin{align*}
    \sI&\preceq \xi_{\lambda_j,-j}(s-t,y-x)+\xi_{\lambda_j,-j}(s-t,y'-x)\\
    &\preceq|y-y'|^{\beta'}\Big(\xi_{\lambda_j,-\beta'-j}(s-t,y-x)+\xi_{\lambda_j,-\beta'-j}(s-t,y'-x)\Big).
  \end{align*}
  (Case $|y-y'|\leq\delta\wedge\sqrt{s-t}$).  In this case,  by \eqref{KH1} we have
  \begin{align*}
    \sI&\leq\frac{|H_j(A^{-1}_{t,s}(y), y-x)|}{\sqrt{\det(A_{t,s}(y))}}\Big|\e^{-\<A^{-1}_{t,s}(y)(y-x),y-x\>}-\e^{-\<A^{-1}_{t,s}(y')(y'-x),y'-x\>}\Big|\\
    &+\Bigg|\frac{H_j(A^{-1}_{t,s}(y), y-x)}{\sqrt{\det(A_{t,s}(y))}}-\frac{H_j(A^{-1}_{t,s}(y'), y'-x)}{\sqrt{\det(A_{t,s}(y'))}}\Bigg|\e^{-\<A^{-1}_{t,s}(y')(y'-x),y'-x\>}=:\sI_1+\sI_2.
  \end{align*}
  For $\sI_1$,
  notice that
  \begin{align*}
    R&:=|\<A^{-1}_{t,s}(y)(y-x),y-x\>-\<A^{-1}_{t,s}(y')(y'-x),y'-x\>|\\
    &\leq C_0(s-t)^{-1}(|y-y'|^\beta |y-x|^2+|y-y'| (|y-x|+|y'-x|))\\
    &\leq (C_0\delta^\beta+\tfrac{\lambda}{4})|y-x|^2/(s-t)+C,
  \end{align*}
  where $\lambda$ is the same as in \eqref{KH3}, and by \eqref{JH3} and \eqref{KH3},
  $$
    |H_j(A^{-1}_{t,s}(y), z)|\leq C(s-t)^{-j/2} h_j(|z|/\sqrt{s-t}),
  $$
  where $h_j$ is a $j$-order real polynomial. Thus  in view of $\e^R-1\leq R\e^R$ for all $R>0$,
  by \eqref{KH2} and choosing $\delta$ small enough so that $C_0\delta^\beta=\frac{\lambda}{4}$, we have
  \begin{align*}
    \sI_1&\preceq (s-t)^{-j/2}h_j(|y-x|/\sqrt{s-t})\cdot\xi_{\lambda,0}(s-t,y-x)\cdot (\e^R-1)\\
    &\preceq (s-t)^{-j/2}h_j(|y-x|/\sqrt{s-t})\cdot\xi_{\lambda/2,0}(s-t,y-x)\\
    &\times (s-t)^{-1}\Big(|y-y'|^\beta |y-x|^2+|y-y'|(|y-x|+|y'-x|)\Big)\\
    &\preceq |y-y'|^{\beta'}\xi_{\lambda/3,-\beta'-j}(s-t,y-x),
  \end{align*}
  where we have used the fact that $|y-y'|\leq\sqrt{s-t}$. Similarly, one can show
  \begin{align*}
    \sI_2\preceq|y-y'|^{\beta'}\xi_{\lambda/3,-\beta'-j}(s-t,y'-x).
  \end{align*}
  Combining the above calculations, we get \eqref{yy}.
\end{proof}

The classical Levi's freezing coefficients method suggests that the heat kernel $Z$ of $\sL^a_t$ takes the following form:
\begin{align}
  Z(t,x;s,y)=Z_0(t,x;s,y)+\int^s_t\!\!\!\int_{\mR^d}Z_0(t,x;r,z)Q(r,z;s,y)\dif z\dif r,\label{eqG2}
\end{align}
where $Q$ satisfies the following integro-equation:
\begin{align} \label{eqG3}
  Q(t,x;s,y)=Q_0(t,x;s,y)+\int^s_t\!\!\!\int_{\mR^d}Q_0(t,x;r,z)Q(r,z;s,y)\dif z\dif r
\end{align}
with
\begin{align}
   Q_0(t,x;s,y):=\big(\sL^a_t- \sL^{a_\cdot(y)}_t \big)Z_0(t,\cdot;s,y)(x)
  =\sum_{i,j=1}^d \Big( a^{ij}_t(x)-a^{ij}_t(y)\Big)\p^2_{x_ix_j}Z_0(t,x;s,y).
\end{align}

Let us first solve the integral equation \eqref{eqG3}.
\bl\label{Le53}
  For each $n\in\mN$, define $Q_n(t,x;s,y)$ recursively by
  \begin{align}
    Q_n(t,x;s,y):=\int^s_t\!\!\!\int_{\mR^d}Q_0(t,x;r,z)Q_{n-1}(r,z;s,y)\dif z\dif r.\label{HH3}
  \end{align}
  Under {\bf (H$^a$)}, the series $Q(t,x;s,y):=\sum_{n=0}^\infty Q_n(t,x;s,y)$ is locally uniformly and absolutely convergent, and solves the integral equation \eqref{eqG3}. Moreover,
  \begin{align}
    Q(t,x;s,y)=Q_0(t,x;s,y)+\int^s_t\!\!\!\int_{\mR^d}Q(t,x;r,z)Q_0(r,z;s,y)\dif z\dif r,\label{HH32}
  \end{align}
  and for any $T>0$, on $\mD^T_0$, we have
  \begin{align}\label{Es45}
    |Q(t,x;s,y)|\leq C\xi_{\lambda,\beta-2}(s-t;y-x),
  \end{align}
  and, for any $\beta'\in(0,\beta)$,
  \begin{align}
    &|Q(t,x_1;s,y)-Q(t,x_2;s,y)|\leq C|x_1-x_2|^{\beta'}\sum_{i=1,2}\xi_{\lambda,\beta-\beta'-2}(s-t;y-x_i),\label{RT29}\\
    &|Q(t,x;s,y_1)-Q(t,x;s,y_2)|\leq C|y_1-y_2|^{\beta'}\sum_{i=1,2}\xi_{\lambda,\beta-\beta'-2}(s-t,y_i-x).\label{Qy}
    \end{align}
\el
\begin{proof}
  (i) First of all, by \eqref{Es44} and {\bf (H$^a$)}, there exist $C_0, \lambda>0$ such that
  \begin{align}
    |Q_0(t,x;s,y)|\leq C_0 \xi_{\lambda,\beta-2}(s-t,y-x).\label{HH2}
  \end{align}
We use induction to prove that for all $n\in\mN$, $0\leq t<s$ and $x,y\in\mR^d$,
  \begin{align}
    |Q_{n-1}(t,x;s,y)|\leq \frac{(C_0\Gamma(\beta/2))^{n}}{(\lambda/\pi)^{d(n-1)/2}\Gamma(n\beta/2)}\xi_{\lambda,n\beta-2}(s-t,y-x),\label{HH1}
  \end{align}
 where $\Gamma$ is the usual Gamma function, and
  \begin{align}\label{HH11}
    Q_n(t,x;s,y)=\int^s_t\!\!\!\int_{\mR^d}Q_{n-1}(t,x;r,z)Q_0(r,z;s,y)\dif z\dif r.
  \end{align}
  Suppose that \eqref{HH1} and \eqref{HH11} hold for some $n\in\mN$. Let
  $\gamma_n:=\frac{(C_0\Gamma(\beta/2))^{n}}{(\lambda/\pi)^{d(n-1)/2}\Gamma(n\beta/2)}$. By \eqref{HH2}, \eqref{HH1} and \eqref{CKE}, we have
  \begin{align*}
    |Q_n(t,x;s,y)|&\leq C_0\gamma_n\int^s_t(r-t)^{\frac{\beta}{2}-1}(s-r)^{\frac{n\beta}{2}-1} \left(\int_{\mR^d}\xi_{\lambda,0}(r-t,z-x)\xi_{\lambda,0}(s-r,y-z)\dif z\right)\dif r\\
    &= C_0(\pi\lambda^{-1})^{d/2}\gamma_n\xi_{\lambda,0}(s-t,y-x)\int^s_t(r-t)^{\frac{\beta}{2}-1}(s-r)^{\frac{n\beta}{2}-1}\dif r\\
    &=C_0(\pi\lambda^{-1})^{d/2}\gamma_n\xi_{\lambda,(n+1)\beta-2}(s-t,y-x)\cB(\tfrac{n\beta}{2},\tfrac{\beta}{2}) =\gamma_{n+1}\xi_{\lambda,(n+1)\beta-2}(s-t,y-x),
  \end{align*}
  where in the last step we have used $\cB(\tfrac{n\beta}{2},\tfrac{\beta}{2}) = \Gamma(\frac{\beta}{2}) \Gamma(\frac{n\beta}{2})/\Gamma(\frac{(n+1)\beta}{2})$. Moreover, by the induction hypothesis and Fubini's theorem, we have
  \begin{align*}
    Q_{n+1}(t,x;s,y)&=\int^s_t\!\!\!\int_{\mR^d}Q_0(t,x;r,z)\int^s_r\!\!\!\int_{\mR^d}Q_{n-1}(r,z;r',z')Q_0(r',z';s,y)\dif z'\dif r'\dif z\dif r\\
    &=\int^s_t\!\!\!\int_{\mR^d}\!\!\int^{r'}_t\!\!\!\!\int_{\mR^d}Q_0(t,x;r,z)Q_{n-1}(r,z;r',z')\dif z\dif rQ_0(r',z';s,y)\dif z'\dif r'\\
    &=\int^s_t\!\!\!\int_{\mR^d}Q_n(t,x;r',z')Q_0(r',z';s,y)\dif z'\dif r'.
  \end{align*}
  Thus  by \eqref{HH1}, the series $Q=\sum_{n=0}^\infty Q_n$ is locally uniformly and absolutely convergent, and solves (\ref{eqG3}) and \eqref{HH32}. Moreover, we also have the estimate (\ref{Es45}).
  \\
  \vbox{~}
  (ii) Next, we prove (\ref{RT29}). We first show that
  \begin{align}\label{rt11}
    |Q_0(t,x_1;s,y)-Q_0(t,x_2;s,y)|\preceq |x_1-x_2|^{\beta'}\sum_{i=1,2}\xi_{\lambda,\beta-\beta'-2}(s-t;y-x_i).
  \end{align}
  If $|x_1-x_2|>\sqrt{s-t}$, then by \eqref{HH2}, one sees that \eqref{rt11} holds. If $|x_1-x_2|\leq\sqrt{s-t}$,
  then by the definition of $Q_0$ and \eqref{Es44}, we have for some $\theta\in[0,1]$,
  \begin{align*}
    &|Q_0(t,x_1;s,y)-Q_0(t,x_2;s,y)|\leq |a_t(x_1)-a_t(x_2)|\cdot|\nabla^2_xZ_0(t,x_1;s,y)|\no\\
    &\qquad\qquad+|a_t(x_2)-a_t(y)|\cdot|\nabla^2_xZ_0(t,x_1;s,y)-\nabla^2_xZ_0(t,x_2;s,y)|\no\\
    &\quad\preceq |x_1-x_2|^\beta\xi_{\lambda_2,-2}(s-t,y-x_1)+|y-x_2|^\beta|x_1-x_2|\xi_{\lambda_3,-3}(s-t,y-x_2-\theta(x_1-x_2))\no\\
    &\quad\preceq |x_1-x_2|^{\beta'}\xi_{\lambda,\beta-\beta'-2}(s-t,y-x_1)+|y-x_2|^\beta|x_1-x_2|^{\beta'}\xi_{\lambda,-\beta'-2}(s-t,y-x_2) \no\\
    &\quad\preceq|x_1-x_2|^{\beta'} \sum_{i=1,2}\xi_{\lambda,\beta-\beta'-2}(s-t,y-x_i).
  \end{align*}
  Write
  $$
    G(t,x;s,y):=\int^s_t\!\!\!\int_{\mR^d}Q_0(t,x;r,z)Q(r,z;s,y)\dif z\dif r.
  $$
  By (\ref{Es45}) and \eqref{rt11}, we have
  \begin{align}
    &\quad|G(t,x_1;s,y)-G(t,x_2;s,y)|\no\\
    &\preceq |x_1-x_2|^{\beta'}\int^s_t\!\!\!\int_{\mR^d}  \xi_{\lambda,\beta-2}(s-r,y-z)\sum_{i=1,2}\xi_{\lambda,\beta-\beta'-2}(r-t,z-x_i)\dif z\dif r\no\\
    &\preceq|x_1-x_2|^{\beta'}\sum_{i=1,2}\xi_{\lambda,\beta-\beta'-2}(s-t,y-x_i).\label{qq}
  \end{align}
  Combining this with (\ref{rt11}), we obtain \eqref{RT29}.
  \\
  \vbox{~}
  (iii) We now show that on $\mD^T_0$,
  \begin{align}
    |Q_0(t,x;s,y_1)-Q_0(t,x;s,y_2)|\preceq |y_1-y_2|^{\beta'}\sum_{i=1,2}\xi_{\lambda,\beta-\beta'-2}(s-t,y_i-x).\label{qyy}
  \end{align}
  If $|y_1-y_2|>\sqrt{s-t}$, then by \eqref{HH2} again, one sees that \eqref{qyy} holds. If $|y_1-y_2|\leq\sqrt{s-t}$, we write
  \begin{align*}
    &|Q_0(t,x;s,y_1)-Q_0(t,x;s,y_2)|\leq\big|a_t(y_1)-a_t(y_2)\big|\Big(|\nabla^2_xZ_0(t,x;s,y_1)|+|\nabla^2_xZ_0(t,x;s,y_2)|\Big)\\
    &\quad+\Big(|a_t(x)-a_t(y_1)|\wedge|a_t(x)-a_t(y_2)|\Big)\big|\nabla^2_xZ_0(t,x;s,y_1)-\nabla^2_xZ_0(t,x;s,y_2)\big|=:I_1+I_2.
  \end{align*}
  For $I_1$, we have
  \begin{align*}
    I_1\preceq |y_1-y_2|^{\beta}\sum_{i=1,2}\xi_{\lambda,-2}(s-t,y_i-x)\preceq|y_1-y_2|^{\beta'}\sum_{i=1,2}\xi_{\lambda,\beta-\beta'-2}(s-t,y_i-x).
  \end{align*}
  For $I_2$, by (\ref{yy}) we have
  \begin{align*}
    I_2&\preceq |y_1-y_2|^{\beta'} \big(|y_1-x|^{\beta}\wedge|y_2-x|^{\beta}\big)\sum_{i=1,2}\xi_{\lambda,-\beta'-2}(s-t,y_i-x)\\
    &\preceq|y_1-y_2|^{\beta'}\sum_{i=1,2}\xi_{\lambda,\beta-\beta'-2}(s-t,y_i-x).
  \end{align*}
  Thus  \eqref{qyy} is proven. Using \eqref{qyy} and as in (ii), we have \eqref{Qy}.
\end{proof}

The following result can be derived in the same way as that in the proof of \cite[Theorem 6, p.13]{Friedman.1964.346} and so its proof is omitted here.

\bl \label{Le54}
  Let $f:\mR_+\times\mR^d\to\mR$ be a measurable function and satisfy that for some $\gamma\in(0,1)$
  $$
    |f(t,x)-f(t,x')|\leq C|x-x'|^\gamma.
  $$
  Fix $s>0$ and define $V(t,x):=\int^s_t\!\int_{\mR^d}Z_0(t,x;r,z)f(r,z)\dif z\dif r.$ Then we have the following conclusions:
  \begin{enumerate}[(i)]
    \item The mapping $(t,x)\mapsto\nabla^2_xV(t,x)$ is continuous on $[0,s)\times\mR^d$ and
        \begin{align}\label{VV}
          \nabla^2_xV(t,x)=\int^s_t\!\!\!\int_{\mR^d}\nabla^2_x Z_0(t,x;r,z)f(r,z)\dif z\dif r,
        \end{align}
     where the integral in the right hand side is understood in the sense of double integral.

    \item For Lebesgue-almost all $t\in[0,s]$ and $x\in\mR^d$,
        \begin{align}\label{NM2}
          \p_tV(t,x)+\sL^{a_\cdot(y)}_t V(t,x)+f(t,x)=0.
        \end{align}
  \end{enumerate}
\el

Now we are ready to give

\begin{proof}[Proof of Theorem \ref{T23}]
  We need to check $Z(t,x;s,y)$ defined by \eqref{eqG2} has all the stated properties.
   Let
  $$
    \Phi(t,x;s,y):=\int^s_t\!\!\!\int_{\mR^d}Z_0(t,x;r,z)Q(r,z;s,y)\dif z\dif r.
  $$
  (1) By \eqref{Es44} and \eqref{Es45}, we have
  \begin{align}
    |\Phi(t,x;s,y)|&\preceq \int^s_t\!\!\!\int_{\mR^d}\xi_{\lambda,0}(r-t,z-x)\xi_{\lambda,\beta-2}(s-r,y-z)\dif z\dif r\no\\
    &\preceq \left(\int^s_t(s-r)^{\frac{\beta-1}{2}}\dif r\right)\xi_{\lambda,0}(s-t,y-x)\preceq \xi_{\lambda,\beta+1}(s-t,y-x),\label{11}
  \end{align}
  which together with \eqref{Es44} gives the upper bound of \eqref{ET41}.

  In view of \eqref{RT29}, by \eqref{VV}, for $j=1,2$, we may write
  \begin{align}
    \nabla^j_x\Phi(t,x;s,y)= & \int^s_{\frac{s+t}{2}}\!\int_{\mR^d}\nabla^j_xZ_0(t,x;r,z)Q(r,z;s,y)\dif z\dif r\no\\
    &+\int^{\frac{s+t}{2}}_t\!\!\!\int_{\mR^d}\nabla^j_xZ_0(t,x;r,z)(Q(r,z;s,y)-Q(r,x;s,y))\dif z\dif r\no\\
    &+\int^{\frac{s+t}{2}}_t\!\!\!\left(\int_{\mR^d}\nabla^j_xZ_0(t,x;r,z)\dif z\right)Q(r,x;s,y)\dif r=:I_1+I_2+I_3.\label{2}
  \end{align}
  For $I_1$, by (\ref{Es44}) and (\ref{Es45}), we have
  \begin{align*}
    |I_1|&\preceq \int^s_{\frac{s+t}{2}}\!\int_{\mR^d}|\nabla^j_xZ_0(t,x;r,z)|\cdot|Q(r,z;s,y)|\dif z\dif r\\
     &\preceq \int^s_{\frac{s+t}{2}}\!\int_{\mR^d}\xi_{\lambda_j,-j}(r-t,z-x)\xi_{\lambda_j,\beta-2}(s-r,y-z)\dif z\dif r\\
     &\preceq \left(\int^s_{\frac{s+t}{2}}(r-t)^{-\frac{j}{2}}(s-r)^{\frac{\beta}{2}-1}\dif r\right) \xi_{\lambda_j,0}(s-t,y-x) \preceq \xi_{\lambda_j,\beta-j}(s-t,y-x).
  \end{align*}
  For $I_2$, by (\ref{Es44}) and (\ref{RT29}), we have
  \begin{align*}
    |I_2|&\preceq \int^{\frac{s+t}{2}}_t\!\!\!\int_{\mR^d}\xi_{\lambda_j,-j}(r-t,z-x)|z-x|^{\beta'} \Big(\xi_{\lambda_j,\beta-\beta'-2}(s-r,y-z)+\xi_{\lambda_j,\beta-\beta'-2}(s-r,y-x)\Big)\dif z\dif r\\
    &\preceq \int^{\frac{s+t}{2}}_t\!\!\!\int_{\mR^d}\xi_{\lambda_j/2,\beta'-j}(r-t,z-x) \Big(\xi_{\lambda_j/2,\beta-\beta'-2}(s-r,y-z)+\xi_{\lambda_j/2,\beta-\beta'-2}(s-r,y-x)\Big)\dif z\dif r\\
    &\preceq \left(\int^{\frac{s+t}{2}}_t(r-t)^{\frac{\beta'-j}{2}}\dif r\right)\xi_{\lambda_j/2,\beta-\beta'-2}(s-t,y-x)\preceq \xi_{\lambda_j/2,\beta-j}(s-t,y-x).
  \end{align*}
  For $I_3$, noticing that for each $y\in\mR^d$,
  $$
    \int_{\mR^d}\nabla^j_x Z_y(t,x;r,z)\dif z=\nabla^j_x \int_{\mR^d}Z_{A_{t,r}(y)}(r-t,z-x)\dif z=0,
  $$
  by calculations as in Lemma \ref{Le52}, we have
  \begin{align}
    \left|\int_{\mR^d}\nabla^j_x Z_z(t,x;r,z)\dif z\right|&=\left|\int_{\mR^d}\nabla^j_x Z_{A_{t,r}(z)}(r-t,z-x)-\nabla^j_x Z_{A_{t,r}(x)}(r-t,z-x)\dif z\right|\no\\
    &\preceq\int_{\mR^d}|z-x|^\beta\xi_{\lambda_j,-j}(r-t,z-x)\dif z\preceq (r-t)^{\frac{\beta-j}{2}}.\label{00}
  \end{align}
  Therefore,
  \begin{align*}
    |I_3|\preceq\int^{\frac{s+t}{2}}_t(r-t)^{\frac{\beta-j}{2}}\xi_{\lambda_j,\beta-2}(r-s,y-x)\dif r\preceq\xi_{\lambda_j,\beta-j}(s-t;y-x).
  \end{align*}
  Combining the above calculations, we obtain
  $$
    |\nabla^j_x\Phi(t,x;s,y)|\preceq \xi_{\lambda_j,\beta-j}(s-t;y-x),
  $$
  which together with \eqref{Es44} yields \eqref{eq21}.
  \medskip
  \\
  (2) By \eqref{Es44}, \eqref{Qy} and \eqref{UY30}, we have for $j=0,1$,
  \begin{align*}
    &|\nabla^j_x\Phi(t,x;s,y)-\nabla^j_x\Phi(t,x;s,y')|\\
    &\preceq|y-y'|^{\beta'}\int^s_t\!\!\!\int_{\mR^d}\xi_{\lambda,-j}(r-t,z-x)\Big(\xi_{\lambda,\beta-\beta'-2}(s-r,y'-z)+\xi_{\lambda,\beta-\beta'-2}(s-r,y-z)\Big)\dif z\dif r\\
    &\preceq|y-y'|^{\beta'}\Big(\xi_{\lambda,\beta-\beta'-j}(s-t,y-x)+\xi_{\lambda,\beta-\beta'-j}(s-t,y'-x)\Big),
  \end{align*}
  which together with \eqref{eqG2} and (\ref{yy}) implies (\ref{eq202}).
  \medskip  \\
  (3) In view of (\ref{11}), it suffices to show that
  \begin{align}\label{Lim9}
    \lim_{|t-s|\to 0}\sup_{x\in\mR^d}\left|\int_{\mR^d}Z_0(t,x;s,y)f(y)\dif y-f(x)\right|=0.
  \end{align}
  Notice that (see \eqref{00})
  \begin{align*}
    \left|\int_{\mR^d}Z_0(t,x;s,y)\dif y-1\right|
    &=\left|\int_{\mR^d}Z_{A_{t,s}(y)}(t,x;s,y)-Z_{A_{t,s}(x)}(t,x;s,y)\dif y\right|\\
    &\preceq \int_{\mR^d}|y-x|^{\beta}\xi_{\lambda,0}(s-t,y-x)\dif y\preceq (s-t)^{\frac{\beta}{2}}.
  \end{align*}
  Thus  to prove \eqref{Lim9}, it reduces to show
  \begin{align*}
    \lim_{|t-s|\to 0}\sup_{x\in\mR^d}\left|\int_{\mR^d}Z_0(t,x;s,y)(f(y)-f(x))\dif y\right|=0.
  \end{align*}
  Since $f$ is uniformly continuous, for any $\eps>0$, there exists a $\delta>0$ such that for all $|y-x|\leq\delta$,
  $$
    |f(y)-f(x)|\leq \eps.
  $$
  Therefore, by \eqref{Es44}, we have
  \begin{align*}
    &\left|\int_{\mR^d}Z_0(t,x;s,y)(f(y)-f(x))\dif y\right| \leq\left(\int_{|y-x|\leq\delta}+\int_{|y-x|>\delta}\right)Z_0(t,x;s,y)|f(y)-f(x)|\dif y\\
    &\qquad\preceq\eps\int_{|y-x|\leq \delta}Z_0(t,x;s,y)\dif y+2\|f\|_\infty\int_{|y-x|>\delta}Z_0(t,x;s,y)\dif y\\
    &\qquad\preceq\eps\int_{|y-x|\leq \delta}\xi_{\lambda,0}(s-t,y-x)\dif y+2\|f\|_\infty\int_{|y-x|>\delta}\xi_{\lambda,0}(s-t,y-x)\dif y\\
    &\qquad\preceq\eps\int_{\mR^d}\xi_{\lambda,0}(s-t,y-x)\dif y+2\|f\|_\infty(s-t)^{\alpha/2}\int_{|y-x|>\delta}|y-x|^{-d-\alpha}\dif y\\
    &\qquad\preceq\eps+2\|f\|_\infty(s-t)^{\alpha/2}\delta^{-\alpha}.
  \end{align*}
  Letting $|t-s|\to 0$ and then $\eps\to 0$, we get the desired limit.
  \medskip\\
  (4) and (5). By \eqref{NM1} and \eqref{NM2}, it is easy to see that for fixed $s>0$ and Lebesgue almost all $t\in[0,s]$,
  \begin{align*}
    \p_tZ(t,x;s,y)+\sL^{a}_tZ(t,\cdot;s,y)(x)=0,\ \ x,y\in\mR^d,
  \end{align*}
  that is, equation \eqref{e:2.15} holds.
  In particular, if we let $P^{(Z)}_{t,s}f(x):=\int_{\mR^d}Z(t,x;s,y)f(y)\dif y$, then for
  bounded continuous function $f$ on $\mR^d$,
  $(t,x)\mapsto\nabla^2_xP^{(Z)}_{t,s}f(x)$ is continuous and
  $$
    \p_tP^{(Z)}_{t,s}f(x)+\sL^{a}_tP^{(Z)}_{t,s}f(x)=0,\ \
    \lim_{t\uparrow r}P^{(Z)}_{t,s}f(x)=P^{(Z)}_{r,s}f(x) \ \hbox{for } r\in (0, s].
  $$
  On the other hand, since $t\mapsto P^{(Z)}_{t,r}P^{(Z)}_{r,s}f(x)$ satisfies the same equation with the same final value, by Theorem \ref{Le51},
  we get (\ref{CK1}). Moreover, if we take $f\equiv1$, then we get (\ref{z11}). The same reason yields the non-negativity of $Z(t,x;s,y)$.
  \medskip\\
  (6) Fix $s>0$. For $f\in C^2_b(\mR^d)$, set
  $$
    u(t,x):=f(x)+\int_t^sP^{(Z)}_{t,r}\sL_r^af(x)\dif r,\ \ t\in[0,s].
  $$
  Then we have
  \begin{align*}
    \sL^a_t u(t,x)=\sL^a_tf(x)+\int_t^s\sL^a_tP^{(Z)}_{t,r}\sL_r^af(x)\dif r.
  \end{align*}
  Integrating both sides  from $t_0$ to $s$ with respect to $t$ and by Fubini's theorem, we obtain
  \begin{align*}
    \int^s_{t_0}\sL^a_t u(t,x)\dif t&=\int^s_{t_0}\sL^a_tf(x)\dif t+\int^s_{t_0}\!\!\int_t^s\sL^a_tP^{(Z)}_{t,r}\sL_r^af(x)\dif r\dif t\\
    &=\int^s_{t_0}\sL^a_tf(x)\dif t+\int^s_{t_0}\!\!\int_{t_0}^r \sL^a_tP^{(Z)}_{t,r}\sL_r^af(x)\dif t\dif r\\
    &=\int^s_{t_0}P^{(Z)}_{t_0,r}\sL_r^af(x)\dif r=u(t_0,x)-f(x).
  \end{align*}
  In particular, for almost all $t\in[0,s]$ and $x\in\mR^d$,
  $$
    \p_tu(t,x)+\sL_t^au(t,x)=0,\  \ \lim_{t\uparrow s}u(t,x)=f(x).
  $$
  Using Theorem \ref{Le51} once again (uniqueness), we obtain
  $$
    u(t,x)= P^{(Z)}_{t,s}f(x).
  $$
   (7) The upper bound estimate has been shown in (1). We only need to prove the lower bound estimate. By definition, one sees that for $|x-y|\leq (s-t)^{1/2}$,
  $$
    Z_0(t,x;s,y)\geq C_0(s-t)^{-d/2},
  $$
  and
  \begin{align}
    Z(t,x;s,y)\geq Z_0(t,x;s,y)-|\Phi(t,x;s,y)|\geq C_0(s-t)^{-d/2}-C_1(s-t)^{(\beta+1-d)/2},
  \end{align}
  which has a lower bound $\frac{C_0}{2}(s-t)^{-d/2}$ provided $C_1(s-t)^{(\beta+1)/2}\leq \frac{C_0}{2}$.
  Since $Z(t,x;s,y)$ is non-negative, such an on-diagonal lower bound estimate together with C-K equation and a standard chain argument yields the lower bound estimate \eqref{ET41} (see \cite{FabesStroock.1986.ARMA327} or the proof of Lemma \ref{thmhkFiniteLow}).
  \medskip\\
  (8) Finally, we need to show the uniqueness of $Z(t, x; s, y)$. Suppose that $\widetilde Z(t, x; s, y)$ is another kernel that solves
 \eqref{e:2.15} and has property \eqref{eq21}. Then for
  bounded continuous function $f$ on $\mR^d$ and $s>0$,
  $w(t, x):=\int_{\mR^d} \widetilde Z(t,x;s,y)f(y)\dif y $ is continuous and
  $$
    \p_t w(t, x) +\sL^{a}_t w(t, x)=0,\ \ \lim_{t\uparrow s} w(t, x)=f(x).
  $$
 It follows from Theorem \ref{Le51}, $w(t, x)= P_{t, s}^{(Z)} f(x)$
 and so $\widetilde Z(t, x; s, y)=Z(t, x; s, y)$. The proof of Theorem \ref{T23} is now complete.
\end{proof}

\bigskip

 \def\cprime{$'$}

\end{document}